\newcommand{\Hi}{H_\text{i}}
\newcommand{\Ht}{H_\text{t}}
\newcommand{\Gi}{G_\text{i}}
\newcommand{\Gt}{G_\text{t}}
\newcommand{\fqt}{\fq_\text{t}}
\newcommand{\fht}{\fh_\text{t}}
\newcommand{\fgt}{\fg_\text{t}}
\newcommand{\fgi}{\fg_\text{i}}
\newcommand{\fqi}{\fq_\text{i}}
\newcommand{\fhi}{\fh_\text{i}}
\newcommand{\claim}{{\bf Claim. }}
\begin{document}

\title{Four-dimensional compact Clifford--Klein forms of pseudo-Riemannian symmetric spaces with signature (2, 2)}
\author{Keiichi Maeta}

\maketitle

\begin{abstract}
  We give a classification of irreducible four-dimensional symmetric spaces which admit compact Clifford--Klein forms. For this, we develop a method that applies to particular 1-connected solvable symmetric spaces.\par
 We also examine a `solvable analogue' of Kobayashi's conjecture for reductive groups and find an evidence that the reductive assumption in Kobayashi's conjecture is crucial.
\end{abstract}

\tableofcontents

\section{Introduction}

 We are interested in the classification of indecomposable pseudo-Riemannian symmetric spaces which admit compact Clifford--Klein forms. In this paper, we classify the spaces whose dimensions are up to four and transvection groups are solvable.
In the following, we review a background of our problem from two different viewpoints.\par
First, we review a classification of pseudo-Riemannian symmetric spaces. After \'E. Cartan (\cite{cartan1,cartan2}) classified Riemannian symmetric spaces, Berger (\cite{B}) gave a classification theory of irreducible symmetric spaces. However, since pseudo-Riemannian symmetric spaces have a degenerate subspace in their tangent spaces, they are not necessarily decomposed into irreducible symmetric spaces. A `minimum unit' of pseudo-Riemannian symmetric space is said to be indecomposable. Therefore, one may expect to classify indecomposable symmetric spaces. Indecomposable symmetric spaces with signature $(n,1)$ and $(n,2)$ are classified by Cahen, Wallach, Parker, Kath and Olbrich (\cite{CW,CP,OK}). 
 \par
Second, we review the existence problem of compact Clifford--Klein forms. For a Lie group $G$, its closed subgroup $H$ assume a discrete subgroup $\Gamma\subset G$ acts on $G/H$ properly discontinuously. We say the quotient space $\Gamma \bs G/H$ is a Clifford--Klein form (see Definition~\ref{Def:ckdefi}). In the late 1980s, a systematic study of Clifford--Klein forms for non-Riemannian homogeneous spaces was initiated by T. Kobayashi (\cite{kob89}). The following problem is one of the central problems in this field, but the final answer remains open.
\begin{prob}[\cite{kob89}]\label{Prob:cptckform}
 Classify homogeneous spaces $G/H$ which admit compact Clifford--Klein forms.
\end{prob}
Any Riemannian spaces of reductive type admit compact Clifford--Klein forms (\cite{borel}). However, this problem for non-Riemannian spaces is open. By the classification of Berger (\cite{B}), irreducible symmetric spaces are of reductive type, and most of the works on this problem have focused on classifying symmetric spaces of reductive type which admits compact Clifford--Klein forms (see \cite{Benoist,kob92a,kob96,kobayashiono,KY,Kulkarni,Labourie} and so on). Five series and seven sporadic irreducible symmetric spaces have been found to admit compact Clifford--Klein forms so far (\cite{KY}).

In view of the above two prospects, we are interested in the following subproblem of Problem \ref{Prob:cptckform}.
\begin{prob}[{\cite[\S 1]{OKCK}}]\label{mainprob} 
 Classify reducible and indecomposable pseudo-Riemannian symmetric spaces $G/H$ which admit compact Clifford--Klein forms.
\end{prob}

Since different groups act on a symmetric space transitively, symmetric spaces can be written as different forms. Therefore we can consider Problem~\ref{mainprob} for each group. 
For example, the following two groups act on a symmetric space transitively.

\begin{defi}[isometry group, transvection group, \cite{CP}]
 The {\it isometry group} $\Gi$ of a pseudo-Riemannian symmetric space is the group which consists of all transformations which preserve the pseudo-Riemannian metric on the space.
 The {\it transvection group} $\Gt$ of a pseudo-Riemmanian symmetric space is the closed and connected subgroup of the isometry group generated by the products of two geodesic symmetries.\par
\end{defi}

The transvection group $\Gt$ is a normal subgroup of the isometry group $\Gi$. Roughly speaking, the transvection group is the ``smallest'' group which acts on the symmetric space transitively. 
For example,
\begin{itemize}
 \item for Euclid space $\R^n$, we have $\Gi=O(n)\ltimes \R^n$ and $\Gt=\R^n$,
 \item for the sphere $S^n$, we have $\Gi=O(n+1)$ and $\Gt=SO(n+1)$.
\end{itemize}

Problem \ref{mainprob} was studied by Kath--Olbrich (\cite{OKCK}). They found a necessary and sufficient condition for the existence of compact Clifford--Klein forms in the case of indecomposable Lorentzian symmetric spaces. In this paper, we are interested in Problem \ref{mainprob} for symmetric spaces with signature $(n, 2)$.

For the first step of the problem, we classify four-dimensional indecomposable symmetric spaces $M$ which admit compact Clifford--Klein forms whose transvection group is solvable. Since Kath--Olbrich (\cite{OKCK}) classified Lorentzian spaces which admit compact Clifford--Klein forms, we discuss the space with signature $(2,2)$.

The indecomposable but reducible pseudo-Riemannian symmetric spaces with signature (2,2) are classified by Kath--Olbrich (\cite{OK}). Since a 1-connected pseudo-Riemannian symmetric space $(M,\sigma,g)$ uniquely corresponds to a symmetric triple $(\fg,\sigma,g)$, they classified its symmetric triples.

\begin{fact}[{\cite[Theorem~7.1]{OK}}]\label{Fact:classification}
Let $(\fg,\sigma,g)$ be the symmetric triple corresponding to a 1-connected four-dimensional reducible and indecomposable pseudo-Riemannian symmetric spaces with signature $(2,2)$, and assume that its transvection group is solvable. Then the symmetric triple $(\fg,\sigma,g)$ is isometric to one in the following list.
 \begin{enumerate}
    \renewcommand{\labelenumi}{Case (\Roman{enumi})}
  \item Nilpotent symmetric triples $(\fg_{\text{nil}},\sigma,g_\pm)$ (see Definition \ref{Def:gnil}),
  \item Solvable symmetric triples $(\fg_{D,D'}, \sigma,g)$ (see Definition~\ref{Def:gdd}), where
	\begin{enumerate}
  \item $(D,D')=(\pm\diag{1,\nu},\diag{1,-\nu})\quad (\nu>0)$,\\ $(D,D')=(\pm\diag{1,-\nu},\diag{1,-\nu})\quad (\nu>0,\ \nu\neq 1) $, 
  \item $(D,D')=\left(Q_\nu,Q_{-\nu}\right)\quad (\nu>0)$ (see Notation~\ref{Not:basic}),
  \item $(D,D')=\left(\begin{pmatrix}
	    \pm1&-1\\
	    -1&0
	   \end{pmatrix},
	\begin{pmatrix}
	 0& -1\\ -1&\pm1
	\end{pmatrix}\right),\
	\left(\begin{pmatrix}
	    \pm1&-1\\
	    -1&0
	      \end{pmatrix},
	\begin{pmatrix}
	 0&1\\ 1&\mp1
	\end{pmatrix}\right)$,
  \item $(D,D')=(\pm I_{1,1},I_{1,1})$.
	\end{enumerate}
 \end{enumerate}
\end{fact}



Here, $\fg_{\rm nil}$ is a 3-step nilpotent Lie algebra given in Definition~\ref{Def:gnil}. On the other hand, $\fg_{D,D'}$ denotes an extension of the Heisenberg Lie algebra $\fh_2$ by $
 \begin{pmatrix}
  &D'\\ D&
 \end{pmatrix}\in \fs\fp(2,\R)$ (see Definition~\ref{Def:gdd}). The isometry class of the triple $(\fg_{D,D'},\sigma,g)$ has continuous parameters (see also Figure~4.1).


Our main result in this paper is:

\begin{thm}\label{Thm:mainthm}
Let $(M,\sigma,g)$ be a four-dimensional reducible and indecomposable 1-connected pseudo-Riemannian symmetric space with signature $(2,2)$ whose transvection group is solvable. We denote by $\Gt$ and $\Gi$ its transvection group and isometry group respectively, and set closed subgroups $\Ht\subset \Gt$ and $\Hi\subset \Gi$ by $M\simeq \Gt/\Ht\simeq \Gi/\Hi$.
\begin{enumerate}
 \item $\Gt/\Ht$ admits compact Clifford--Klein forms if and only if its corresponding symmetric triple is $(\fg_{\pm I_{1,1},I_{1,1}},\sigma,g)$.
 \item $\Gi/\Hi$ admits compact Clifford--Klein forms if and only if its corresponding symmetric triple is $(\fg_{\pm I_{1,1},I_{1,1}},g,\sigma)$ or $(\fg_{\rm nil},\sigma,g_\pm)$.
\end{enumerate}
\end{thm}

\begin{table}[h]
 \begin{center}
  \begin{tabular}{c|c|c}
    symmetric triple & $(\fg_{D,D'},\sigma,g)$ & $(\fg_{\rm nil} ,\sigma, g_\pm)$\\\hline
   $\Gt/\Ht$   & $(D,D')=(\pm I_{1,1},I_{1,1})$ & Never \\\hline
   $\Gi/\Hi$ & $(D,D')=(\pm I_{1,1},I_{1,1})$ & Always
  \end{tabular}
   \caption{Symmetric spaces which admit compact Clifford--Klein forms.}
 \end{center}
\end{table}




To prove this theorem, we use two strategies, the constructors (see Definition \ref{Def:constructor}) and intermediate syndetic hulls (see Definition \ref{Def:LSH}). The idea of the constructor for reductive case was introduced by T. Kobayashi (\cite{kob89}), and the following conjecture remains open.
\begin{conj}[{\cite[Conjecture 3.3.10]{KY}}]\label{Conj:Kobayashiconjecture}
 If a homogeneous space $G/H$ of reductive type admits compact Clifford--Klein forms, then $G/H$ admits a reductive constructor, that is, there exists a subgroup $L$ which is reductive in $G$ and acts properly and cocompactly on $G/H$.
\end{conj}

Remark~that the conjecture above does not assert that for a compact Clifford--Klein form $\Gamma\bs G/H$ of a homogeneous space $G/H$ of reductive type, there exists a reductive constructor $L$ containing $\Gamma$ cocompactly.
A Clifford--Klein form $\Gamma\bs G/H$ is {\it standard} (\cite{KK16}) if $\Gamma$ contained in some reductive subgroup $L$ of $G$ acting properly on $X$. In some cases, we obtain a non-standard compact Clifford--Klein form by deforming standard one (see \cite{goldman,kob98}).\par

We show the assumption `reductive type' in this conjecture is crucial by showing a `solvable analogue' of the conjecture does not hold (Example \ref{ex:ceforkobconj}).\\

Organizations of this paper. Section~\ref{Section:Preliminaries} gives basic concepts of pseudo-Riemannian symmetric spaces and Clifford--Klein forms. In Section~\ref{Section:ProperandcocompactinsolvableLiegroups}, we show some general properties about properness and freeness in 1-connected solvable Lie groups. Then we define a class of symmetric spaces $G_{D,D'}/H$ in Section~\ref{Section:Indecomposablesymmetrictriples}. We prove the main theorem in Section~\ref{Section:Proof} using the necessary and sufficient condition for the existence of compact Clifford--Klein forms of $G_{D,D'}/H$ given in Section~\ref{Section:Criterions}. Finally, we show a `solvable analogue' of the Kobayashi's conjecture does not hold in Section~\ref{Section:Onkobayashisconjecture}.

\section{Preliminaries}\label{Section:Preliminaries}

In this section, we review some basic concepts of pseudo-Riemannian symmetric spaces and Clifford--Klein forms.

\subsection{Notation}

In this subsection, we prepare notation used in this paper.

\begin{notation}\label{Not:basic}
 \begin{itemize}
  \item $\R^\times$ := $\R-\{0\}$,
  \item $e$ : the identity element of a group,
  \item $\A_g$ : the inner automorphism with respect to an element $g$ of a group,
  \item $\A_S H:=\set{\A_s h}{s\in S, h\in H}$ for subsets $S, H$ of a group,
  \item $\mathcal{Z}_G$ : the center of a group $G$,
  \item $\mathcal{Z}_G(g)$ : the centralizer of an element $g\in G$ in a group $G$,
  \item $\mathcal{N}_G(L)$ : the normalizer of a subgroup $L\subset G$ in a group $G$,
  \item $\Der \fg$ : the derivation algebra of a Lie algebra $\fg$,
  \item $I_{p,q}:=\begin{pmatrix}
		   I_p&\\
		   &-I_q
		  \end{pmatrix}\in GL(p+q,\R)$,
  \item $M^T$ : the transposed matrix of a matrix $M$,
  \item $\diag{a_i}:=\begin{pmatrix}
		     a_1&&&\\
		     &a_2&&\\
		     &&\ddots&\\
		     &&&a_n
		    \end{pmatrix}$,
  \item $\sym(n,\R):= \set{M\in M(n,\R)}{M\text{ is symmetric}}$,
  \item $\sym^{(\text{reg})} (n,\R):=\set{M\in \sym(n,\R)}{\det M\neq 0}$,
  \item $Q_\nu:=
	\begin{pmatrix}
	 \nu&1\\1&-\nu
	\end{pmatrix}\in M(2,\R)$.
 \end{itemize}
\end{notation}

In this paper, we use the terminology {\it inner product} as a non-degenerate symmetric bilinear form (not necessarily positive definite) and Lie algebras are real and finite dimensional.

\subsection{Symmetric triples and pseudo-Riemannian symmetric spaces}

In this subsection, we recall a correspondence between 1-connected pseudo-Riemannian symmetric spaces and symmetric triples.

\begin{defi}[metric Lie algebra with involution, \cite{CW,OK}]
Let $\fg$ be a Lie algebra, $\sigma$ an involution on $\fg$ and $g$ an (indefinite) inner product on $\fg$. We say $(\fg, \sigma, g)$ is a {\it metric Lie algebra with involution} if $\fg$, $\sigma$ and $g$ are mutually compatible, that is, satisfy the following conditions:
\begin{enumerate}
 \item the inner product $g$ is $\sigma$-invariant,
 \item the inner product $g$ is $\fg$-invariant, namely, 
       \[
	g([X,Y],Z)+g(Y,[X,Z])=0 \quad (\forall X, Y, Z\in \fg).
       \]
\end{enumerate}
\end{defi}

\begin{defi}[symmetric triple, \cite{CW,OK}]\label{Def:symmetrictriple}
 A metric Lie algebra with involution $(\fg,\sigma, g)$ is called a {\it symmetric triple} if the subspace $\fq\coloneqq\fg^{-\sigma}$ satisfies $[\fq,\fq]=\fg^\sigma$.
\end{defi}

\begin{defi}[homomorphism on symmetric triple, \cite{CW,OK}]\label{Def:hom}
 For two symmetric triples $(\fg_1,\sigma_1,g_1)$ and $(\fg_2,\sigma_2,g_2)$, a Lie algebra homomorphism $\phi:\fg_1 \rightarrow \fg_2$ is said to be a {\it homomorphism of symmetric triple} if $\phi$ is compatible with the involutions and the inner products, that is, satisfies the following conditions:
 \begin{enumerate}
  \item $\sigma_2\circ \phi=\phi\circ\sigma_1$,
  \item $g_2(\phi(X_1),\phi(X_2))=g_1(X_1,X_2)$ \quad($\forall X_1,X_2\in\fg_1$).
 \end{enumerate}
\end{defi}

\begin{note}\label{Note:qtosigma}
Let $\fg$ be a Lie algebra and $\fh$ its subalgebra. The following correspondence is bijective:
\begin{align*}
 &\{\text{involutions $\sigma$ on $\fg$ satisfying $\fg^\sigma=\fh$}\} \\
 &\simeq\{\text{complementary spaces $\fq\subset \fg$ of $\fh$ satisfying $[\fq,\fh]\subset \fq$ and $[\fq,\fq]\subset \fh$}\} \\
 &\sigma \mapsto \fg^{-\sigma}.
\end{align*}
\end{note}

\begin{fact}[\cite{CP}]\label{Fact:gonq}
 Let $\fg$ be a Lie algebra and $\sigma$ its involution. Put $\fh:=\fg^\sigma$ and $\fq:=\fg^{-\sigma}$. If $[\fq,\fq]=\fh$, then the following restriction is bijective:
 \begin{align*}
  \{\text{$\fg$-invariant inner product on $\fg$}\}&\simeq \{\text{$\fh$-invariant inner product on $\fq$}\}\\
  g&\mapsto g|_{\fq\times \fq}.
 \end{align*} 
Especially, any $\fg$-invariant inner product is also $\sigma$-invariant in this case.
\end{fact}

\begin{note}\label{Note:kousei}
For a Lie algebra $\fg$ and its subalgebra $\fh$, assume a subspace $\fq\subset \fg$ and an inner product $g$ on $\fq$ satisfy the following conditions:
 \begin{itemize}
  \item $\fg=\fq\oplus\fh$,
  \item $[\fq,\fh]\subset \fq$ and $[\fq,\fq]=\fh$,
  \item $g$ is $\fh$-invariant.
 \end{itemize}
Then a symmetric triple $(\fg,\sigma,g)$ is uniquely determined by $g$ and $\fq$ (see Note~$\ref{Note:qtosigma}$ and Fact~$\ref{Fact:gonq}$).
\end{note}

\begin{defi}[\cite{CW,OK}]
 For a symmetric triple $(\fg,\sigma,g)$, we call the signature of $g$ (on $\fq$) {\it the signature of the symmetric triple}. 
\end{defi}

In the following, we review the correspondence between symmetric triples and pseudo-Riemannian symmetric spaces.

\begin{fact}[{\cite[Ch.I Section 2]{CP},\ \cite{OK}}]\label{Fact:transvection}
 There is a bijection between the isomorphic classes of $1$-connected pseudo-Riemannian symmetric spaces with signature $(p, q)$ and the isomorphic classes of symmetric triples with signature $(p,q)$. Let $(\fg, \sigma, g)$ be a symmetric triple and $M$ its corresponding pseudo-Riemannian symmetric space, then $\fg$ is the Lie algebra of the transvection group of $M$. The 1-connected Lie group $G$ with Lie algebra $\fg$ is the transvection group of $M$.
\end{fact}


Like the case of Riemannian symmetric spaces, the goal of the classification problem of pseudo-Riemannian symmetric spaces is to classify their `minimum units', which are indecomposable.
For Riemannian spaces, they are irreducible symmetric spaces, but are not necessarily for pseudo-Riemannian spaces. We define reducibilities and decomposabilities of symmetric triples and symmetric spaces.

\begin{defi}[\cite{N54}]
 We say a symmetric triple $(\fg,\sigma,g)$ is \emph{reducible} if the isotropy representation $\ad:\fh\to \fg\fl(\fq)$ is reducible for $\fh:=\fg^\sigma$ and $\fq:=\fg^{-\sigma}$. A 1-connected pseudo-Riemannian symmetric space $G/H$ is said to be \emph{reducible} if its triple is reducible.
\end{defi}

\begin{defi}[\cite{CW,OK}]
 For two symmetric triples $\ft_1=(\fg_1,\sigma_1,g_1)$ and $\ft_2=(\fg_2,\sigma_2,g_2)$, the triple $\ft_1\oplus\ft_2 \coloneqq (\fg_1\oplus\fg_2,\sigma_1\oplus \sigma_2,g_1\oplus g_2)$ is also a symmetric triple. We say $\ft_1\oplus\ft_2$ is the {\it direct sum} of $\ft_1$ and $\ft_2$. A symmetric triple is said to be {\it decomposable} if it is written as the direct sum of two non-trivial symmetric triples.
\end{defi}

\begin{defi}[\cite{CW,OK}]
A pseudo-Riemannian symmetric space is said to be {\it decomposable} if the space is isomorphic to the direct product of two non-trivial pseudo-Riemannian symmetric spaces.
\end{defi}


The decomposability of pseudo-Riemannian symmetric spaces corresponds to that of symmetric triples.

\begin{prop}[{\cite[Proposition~4.4]{CP}}]
 Let $M$ be a 1-connected pseudo-Riemannian symmetric space, and $\ft \coloneqq (\fg,\sigma, g)$ the corresponding symmetric triple. Then the following correspondence is one to one.
 \[
  \{\text{decompositions of $\ft$}\}\to \{\text{decompositions of $M$}\}\quad \ft_1\oplus\ft_2\mapsto M_1\times M_2,
 \]
 where $M_1$ and $M_2$ are the corresponding 1-connected pseudo-Riemannian symmetric spaces of symmetric triples $\ft_1$ and $\ft_2$, respectively.
\end{prop}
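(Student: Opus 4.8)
The plan is to bootstrap everything from the bijection between $1$-connected pseudo-Riemannian symmetric spaces of signature $(p,q)$ and symmetric triples of that signature, stated above. The content of the proposition is that this bijection is compatible with the two "product" operations in play: direct sum $\oplus$ on symmetric triples and direct product $\times$ on symmetric spaces. So the three things to check are that $\ft_1\oplus\ft_2\mapsto M_1\times M_2$ is well defined as a map into decompositions of $M$, that it is injective, and that it is surjective.

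First I would treat the easy direction: the symmetric triple attached to a product $M_1\times M_2$ is the direct sum of the symmetric triples of $M_1$ and $M_2$. The point is that a geodesic symmetry of $M_1\times M_2$ at a point $(o_1,o_2)$ is the product of the geodesic symmetries at $o_1$ and at $o_2$; hence the transvection group $G$ of $M_1\times M_2$, being generated by products of two geodesic symmetries, is $G_1\times G_2$, the canonical involution is $\sigma_1\times\sigma_2$, and the invariant metric is $g_1\oplus g_2$. Therefore $(\fg,\sigma,g)\cong(\fg_1,\sigma_1,g_1)\oplus(\fg_2,\sigma_2,g_2)$. Combined with the bijection theorem this gives well-definedness: if $\ft_1\oplus\ft_2\cong\ft$, then $M_1\times M_2$ is a $1$-connected pseudo-Riemannian symmetric space whose symmetric triple is $\cong\ft$, hence $M_1\times M_2\cong M$, and the two ideals $\fg_1,\fg_2$ transport to a genuine decomposition of $M$.

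For surjectivity I would start from an arbitrary decomposition $M=M_1\times M_2$ and reconstruct the matching splitting of $\ft$. Fix the base point $o=(o_1,o_2)$; then $\fq=T_oM$ splits orthogonally as $\fq_1\oplus\fq_2$ with $\fq_i=T_{o_i}M_i$, and each $\fq_i$ is a sub-Lie-triple-system because the curvature tensor of a pseudo-Riemannian product is block diagonal. In symmetric-triple language, the identity $R(X,Y)Z=-[[X,Y],Z]$ forces $[\fq_1,\fq_2]=0$, and then $\fh_i\coloneqq[\fq_i,\fq_i]$ satisfy $[\fh_1,\fq_2]=[\fh_2,\fq_1]=[\fh_1,\fh_2]=0$. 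Setting $\fg_i\coloneqq\fh_i\oplus\fq_i$, one checks that each $\fg_i$ is an ideal of $\fg$, that $\sigma$ and $g$ restrict to $\fg_i$, that $[\fq_i,\fq_i]=\fh_i$ (so $(\fg_i,\sigma|_{\fg_i},g|_{\fg_i})$ is again a symmetric triple), and that $\fg=\fg_1\oplus\fg_2$. This is the desired decomposition of $\ft$, and by the easy direction it maps back to the given decomposition of $M$. Injectivity is then immediate, since a decomposition of $\ft$ determines, and is determined by, the orthogonal splitting of $\fq$ at $o$, which is exactly the differential at $o$ of the corresponding decomposition of $M$.

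The step I expect to be the main obstacle is the bookkeeping in surjectivity: one has to read "decomposition" as an internal direct sum (a pair of complementary sub-objects) rather than an abstract isomorphism onto a product, and then verify that $\fg=\fh\oplus\fq$ genuinely splits as an \emph{ideal} sum — i.e. that the transvection condition $\fh=[\fq,\fq]$ really forces $\fh=\fh_1\oplus\fh_2$ and that none of the mixed brackets survive. All of this, however, reduces to the vanishing of the mixed curvature terms of a pseudo-Riemannian product, so beyond organising these identities carefully no new difficulty arises.
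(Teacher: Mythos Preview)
The paper does not give its own proof of this proposition; it is quoted verbatim as \cite[Proposition 4.4]{CP} and used as background, so there is nothing in the paper to compare your argument against.

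That said, your outline is essentially the standard argument and is correct in substance. One point worth tightening in the surjectivity step: from $R(X,Y)Z=-[[X,Y],Z]=0$ for $X\in\fq_1$, $Y\in\fq_2$ and all $Z\in\fq$ you want to conclude $[X,Y]=0$, i.e.\ that $[\fq_1,\fq_2]=0$. This needs the faithfulness of the $\fh$-action on $\fq$, which holds for a symmetric triple (an element of $\fh=[\fq,\fq]$ acting trivially on $\fq$ is central in $\fg$ and orthogonal to $[\fq,\fq]=\fh$, hence zero by nondegeneracy of the invariant form). Once that is in place, the rest of your bracket bookkeeping ($\fh=\fh_1\oplus\fh_2$, the $\fg_i$ are ideals, etc.)\ goes through exactly as you say.
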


\subsection{Isometry groups}

In this subsection, we prepare some lemmas which are used for calculating isometry group of pseudo-Riemannian symmetric spaces. For this, we use:

\begin{notation}
For a 1-connected pseudo-Riemannian symmetric space $(M,S,g)$, we set the origin point $o\in M$. We put
 \begin{itemize}
  \item $\Gi$ : the isometry group,\ $\Hi$ : the stabilizer of $o$,
  \item $\Gt$ : the transvection group,\ $\Ht$ : the stabilizer of $o$,
  \item $\fgi=\fhi\oplus\fqi$ : the Lie algebra of $\Gi$ and its eigenspace deconposition with respect to $\Ad_{S_0}$.
  \item $\fgt=\fht\oplus \fqt$ : the Lie algebra of $\Gt$ and its eigenspace deconposition with respect to $\Ad_{S_0}$.
 \end{itemize}
\end{notation}

\begin{lem}\label{Lem:transvection}
 For a 1-connected pseudo Riemannian symmetric space $(M,S,g)$, the map $\Hi\ltimes \Gt\to\Gi,\ (h,g)\mapsto gh$ is surjective group homomorphism and its kernel is:
 \[
  \wt{\Ht}:=\set{(g,g\inv)\in \Hi\ltimes \Gt}{g\in\Ht}.
 \]
Especially, we have a Lie group isomorphism $\Gi\simeq (\Hi\ltimes\Gt)/\wt{\Ht}$. Moreover, if there exists a closed subgroup $T\subset \Hi$ satisfying $\Hi\simeq T\ltimes \Ht$, we have a Lie group isomorphism $\Gi\simeq T\ltimes \Gt$.
\end{lem}

\begin{proof}
Since the $\Gt$-action on $M$ is transitive, for any $g\in \Gi$, there exists $g'\in \Gt$ satisfying ${g'}\inv g\in\Hi$. Therefore, the map $\Hi\ltimes \Gt\to\Gi,\ (h,g)\mapsto gh$ is surjective group homomorphism. The latter statement follows from the following Note with $A:=T,\ B:=\Ht$ and $C:=\Gt$.
\end{proof}

\begin{note}
 Let $A,B$ and $C$ be Lie groups. Assume that there exists a map $\phi:A\to \Aut(C)$ such that $B\subset C$ is a $\phi$-invariant closed subgroup. We set $\psi:A\ltimes B\to \Aut(C)$ by $\psi(a,b)(c)=\A_b\phi_a(c)$. Then $\psi$ is a Lie group homomorphism. Moreover the following map $f$ is also a Lie group homomorphism and its kernel is $\set{(e,b,b\inv)}{b\in B}=:\wt{B}$.
 \[
  f:(A\ltimes B)\ltimes_{\psi}C\to A\ltimes C,\quad (a,b,c)\mapsto (a,cb).
 \]
 Especially, we have $((A\ltimes B)\ltimes C)/\wt{B}\simeq A\ltimes C$.
\end{note}

\begin{lem}\label{Lem:hishape}
Let $(\fgt,\sigma,g)$ is the symmetric triple which corresponds to 1-connected pseudo-Riemannian symmetric space $(M,S,g)$. Then the map  $\Phi:\Hi\to \Aut(\fgt,\sigma,g),\quad h\mapsto \Ad_h|_{\fgt}$ is a Lie group isomorphism.
\end{lem}

\begin{proof}
First, we check that $\Phi$ is well-defined. Since $\Gt\subset \Gi$ is a normal subgroup, we have $\A_h(\Gt)\subset \Gt$. By $\Hi\subset \Gi^\sigma$, we have $\A_{S_0}h=h$. Therefore $\Ad_{S_0}$ and $\Ad_h$ are commutative, namely, $\Ad_{h}$ preserves the decomposition $\fgi=\fhi\oplus \fqi$ and $\fgt=\fht\oplus\fqt$. On the other hand, $\Ad_h$ preserves the inner product on $\fqi=\fqt$. Since it is commutative with $\sigma$ and a Lie algebra homomorphism, it also preserves the inner product on $\fht$, and so $g$ on $\fgt$. Then we have $\Ad_h|_{\fgt}\in \Aut(\fgt,\sigma,g)$.\par
We set a map $\Psi:\Aut(\fgt,\sigma,g)\to \Hi,\ A\mapsto \Psi(A):=\ol{f_A}$ as follows. Let $(\Gt,\Ht,\sigma,g)$ be the symmetric triple which corresponds to the symmetric triple $(\fgt,\sigma,g)$. Put $f_A:\Gt\to\Gt$ the induced Lie group homomorphism by the Lie group homomorphism $A:\fgt\to\fgt$.  Note that $f_A$ is a Lie group isomorphism. Since $f_A$ preserves $\Ht$, it induces $\ol{f_A}:\Gt/\Ht\to \Gt/\Ht$, and it is an isometry by Note~\ref{Note:isom}. Therefore we have $\ol{f_A}\in \Hi$.\par
Next we check that $\Psi$ is the inverse map of $\Phi$. First we show $\Psi\circ \Phi=\id$. Let $h\in \Hi$. It is enough to show $\Psi(\Phi(h))=\ol{f_{\Ad_h}}$ is coincides to $h$. It follows from $f_{\Ad_h}=\A_h$, and $h:M\to M$ and $\ol{f_{\Ad_h}}:M\to M$ make the following diagram commutative:
\[
  \xymatrix{
    \Gt \ar[r]^{f_{\Ad_h}} \ar[d]_{\pi} & \Gt \ar[d]_{\pi} \\
    M \ar[r] & M.
  }
\] 
\end{proof}
 Then we check $\Phi\circ \Psi=\id$. Let $A\in\Aut(\fgt,\sigma,g)$ and $\ol{f_A}\in \Hi$. We are enough to show $\Ad_{\ol{f_A}}=A$, which is equivalent to $\Ad_{\ol{f_A}}|_{\fqt}=A|_{\fqt}$. For $X\in\fq$, we have: 
 \[
  A(X)=d(\ol{f_A})_{eH}(X)=\dfrac{d}{dt}\Big|_{t=0}\ol{f_A}(\exp tX)\Hi=\Ad_{\ol{f_A}}(X).
 \]
Here, $A(X)=d(\ol{f_A})$ follows from Note~\ref{Note:gentenbibun}.

\begin{note}\label{Note:isom}
 Let $G$ be a Lie group, $H$ its closed subgroup and $g$ a $G$-invariant inner product on $G/H$. Suppose that a group isomorphism $f:G\to G$ preserves $H$ and put $\ol{f}:G/H\to G/H$ the induced map. If $d\ol{f}_{eH}$ preserves inner product $g_{eH}$ on $T_{eH}(G/H)$, then $\ol{f}$ is an isometry.
\end{note}

\begin{note}\label{Note:gentenbibun}
 Let $f$ be an automorphism on $(G,H,\sigma)$. Put $\fg:=\Lie{G}$ and $\fq:=\fg^\sigma$. Then we have $d\ol{f}|_{eH}=df_e|_{\fq}$, where $\ol{f}:G/H\to G/H$ is the induced map by $f$ and we identify $T_{eH}G/H\simeq \fq$.
\end{note}

\begin{note}\label{Note:ANlem}
 Let $G$ be a Lie group, $A\subset G$ a closed subgroup and $N\subset G$ a normal subgroup. If $G=AN$ and $A\cap N=\{e\}$, then we have $G\simeq A\ltimes N$, where $A$ acts on $N$ as an inner automorphism.
\end{note}

\subsection{Clifford--Klein forms}

In this subsection, we review Clifford--Klein forms following \cite{kob89} and \cite{kob97}. 

\begin{defi}[\cite{kob89}]\label{Def:ckdefi}
 Let $G$ be a Lie group, $H$ its closed subgroup, and $\Gamma$ its discrete subgroup. Assume the $\Gamma$-action on $G/H$ is (fixed point) free and properly discontinuous. Then the quotient space  $\Gamma\bs G/H$ has the unique manifold structure such that the natural surjection $\pi: G/H\to \Gamma \bs G/H$ is a $C^\infty$-covering map. The manifold $\Gamma\bs G/H$ is said to be a {\it Clifford--Klein form} of $G/H$.
\end{defi}

In the study of Clifford--Klein forms, Problem \ref{Prob:cptckform} is a significant open question. Let us recall basic terminologies for Problem \ref{Prob:cptckform}.

\begin{defi}[\cite{kob89}]
 Suppose a locally compact group $L$ acts on a locally compact space $X$. The $L$-action is said to be {\it proper} if $\set{\ell\in L}{\ell S\cap S\neq \emptyset}$ is compact for any compact subset $S\subset X$.
\end{defi}

It is easy to check the following:

\begin{note}
 In the setting of Definition~\ref{Def:ckdefi}, if the $L$-action on $X$ is proper, any $L$-orbit is closed in $X$.
\end{note}

\begin{fact}[\cite{kob89}]\label{Fact:contianalogy}
 Let $L$ be a locally compact group and $X$ a locally compact space. Assume $L$ acts on $X$ and $\Gamma$ is a uniform lattice (cocompact discrete subgroup) of $L$. Then the following statements hold. 
 \begin{enumerate}
  \item The $\Gamma$-action on $X$ is cocompact if and only if so is the $L$-action.
  \item The $\Gamma$-action on $X$ is properly discontinuous if and only if the $L$-action is proper.
 \end{enumerate}
\end{fact}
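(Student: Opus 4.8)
The plan is to prove each of (1) and (2) as two implications, one purely formal and one relying on the uniform-lattice hypothesis. The only thing I extract from that hypothesis is a \emph{compact} set $C\subseteq L$ with $\Gamma C=L$: since the quotient map $L\to\Gamma\bs L$ is open and $\Gamma\bs L$ is compact, the images of a cover of $L$ by relatively compact open sets admit a finite subcover, and the union of the closures of the corresponding sets is such a $C$. The identical argument applied to the (also open) quotient map $X\to L\bs X$ produces, whenever $L$ acts cocompactly, a compact $K\subseteq X$ with $LK=X$. I take the $L$- and $\Gamma$-actions on $X$ to be left actions, so the relevant quotients are $\Gamma\bs X$ and $L\bs X$.

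For (1): the implication ``$\Gamma$ cocompact $\Rightarrow$ $L$ cocompact'' is formal, since $X\to L\bs X$ is constant on $\Gamma$-orbits and hence factors through a continuous surjection $\Gamma\bs X\twoheadrightarrow L\bs X$, and a continuous image of a compact space is compact. For the converse, take $K\subseteq X$ compact with $LK=X$ and $C$ as above; then $CK$, being the image of the compact set $C\times K$ under the continuous action map $L\times X\to X$, is compact, and $\Gamma\cdot(CK)=\Gamma CK=LK=X$, so the image of $CK$ in $\Gamma\bs X$ is all of it and is compact.

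For (2): I would use the criterion that a continuous action of a locally compact group $L'$ on the locally compact Hausdorff space $X$ is proper iff for every compact $K\subseteq X$ the set $L'_K:=\{\,l\in L':\ lK\cap K\neq\emptyset\,\}$ is relatively compact (this set is automatically closed, so here ``relatively compact'' means compact), noting that for the discrete group $\Gamma$ this is equivalent to $\Gamma_K$ being finite --- i.e.\ ``proper'' specializes to ``properly discontinuous''. Then ``$L$ proper $\Rightarrow$ $\Gamma$ properly discontinuous'' is formal: $\Gamma_K=\Gamma\cap L_K$ is a closed discrete subset of a compact set, hence finite. Conversely, suppose $\Gamma$ acts properly discontinuously, fix $K\subseteq X$ compact, and set $K'':=K\cup CK$, which is compact. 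If $l\in L_K$, pick $x\in K$ with $lx\in K$ and write $l=\gamma c$ with $\gamma\in\Gamma$, $c\in C$; then $cx\in CK\subseteq K''$ and $\gamma(cx)=lx\in K\subseteq K''$, so $\gamma\in\Gamma_{K''}$, whence $l\in\Gamma_{K''}\cdot C$. Thus $L_K\subseteq\Gamma_{K''}\cdot C$, a finite union of compact translates of $C$; so $L_K$ is relatively compact and $L$ acts properly.

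The argument is short and I expect no serious obstacle --- this is the familiar principle that a uniform lattice captures the coarse behaviour of the ambient group. The points needing care are: extracting the compact set $C$ (and, in (1), the compact $K$ covering $X$) from cocompactness, using openness of the relevant quotient maps and local compactness; verifying that every set formed by applying the continuous action map to a product of compacta is again compact; and, in (2), the translation between ``properly discontinuous'' for $\Gamma$ and ``proper'' for $L$ via the $L_K$-criterion, which is where local compactness and Hausdorffness of $X$ enter. One also has to keep the left/right conventions for the two actions consistent, since they appear in the identity $L=\Gamma C$.
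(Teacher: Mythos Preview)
Your argument is correct and is the standard one. Note, however, that the paper does not give a proof of this statement: it is recorded as a \emph{Fact} with a citation to \cite{kob89}, so there is no ``paper's own proof'' to compare against. Your write-up would serve perfectly well as a self-contained justification; the only minor comment is that your assertion that $L_K$ is automatically closed deserves one line of justification (continuity of the action and compactness of $K$), but this does not affect the argument since you only need relative compactness of $L_K$ anyway.
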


We recall some definitions and properties.

\begin{defi}[{\cite[Definition 6]{kob92b}, \cite[Definition 2.1.1]{kob96}}]
 Let $G$ be a locally compact group, and $L$ and $H$ its subsets.
 \begin{enumerate}
  \item We say the pair $(L, H)$ is {\it proper} in $G$, denoted by $L\pitchfork H$ in $G$, if the set $L\cap SHS$ is relatively compact in $G$ for any compact set $S\subset G$. 
  \item We say the pair $(L, H)$ has {\it the property} (CI) in $G$, if the set $L\cap gHg\inv$ is relatively compact in $G$ for any $g\in G$. 
  \item We say the pair $(L, H)$ is {\it free} if the condition $L\cap gHg\inv =\{e\}$ holds for any $g\in G$. 
  \item We denote by $L\sim H$ in $G$ the existence of a compact set $S\subset G$ satisfying $L\subset SHS$ and $H\subset SLS$.
 \end{enumerate}
\end{defi}

\begin{rem}
 In \cite{kob92b}, Kobayashi defined the property (CI) for subgroups $L$ and $H$, but here we define it for subsets $L$ and $H$ for the sake of Lemma~\ref{Lem:closedcone}.
\end{rem}

\begin{property}[\cite{kob92b,kob96}]\label{Property:basicproperty1}
 Let $G$ be a locally compact group, and $H, H'$ and $L$ its subsets.
 \begin{enumerate}
  \item The pair $(L, H)$ is proper (resp. has the property (CI), is free) in $G$ if and only if so is $(H, L)$ in $G$. 
  \item The relation $\sim$ is an equivalence relation. 
  \item If  $H\sim H'$ in $G$, then  $L\pitchfork H$ if and only if $L\pitchfork H'$ in $G$.
  \item If the pair $(L, H)$ is proper, then $(L, H)$ has the property (CI).
 \end{enumerate}
\end{property}

\begin{property}[{\cite[Observation 2.13]{kob96}}]\label{Property:basicproperty2}
 Let $G$ be a locally compact group, and $L$ and $H$ its closed subgroups.
 \begin{enumerate}
    \item The $L$-action on $G/H$ is proper if and only if $L \pitchfork H$ in $G$.
  \item The $L$-action on $G/H$ is free if and only if the pair $(L, H)$ is free in $G$.
 \end{enumerate}
\end{property}

\begin{property}\label{Property:centerquotient}
 Let $G$ be a locally compact group and $N$ its closed normal subgroup, and $H$, $L$ closed subgroups of $G$. We denote by $\widetilde{G}$, $\widetilde{H}$ and $\widetilde{L}$ the image of $G$, $L$ and $H$, respectively, by the natural projection $\pi:G\to G/N$. Then we have:
 \begin{enumerate}
  \item If $N/(L \cap N)$ is compact and $L$ is discrete, $\widetilde{L}\subset \widetilde{G}$ is also discrete.
  \item If $N/(L \cap N)$ is compact, the condition $L\pitchfork H$ in $G$ implies $\widetilde{L}\pitchfork \widetilde{H}$ in $\widetilde{G}$.
  \item If $L \bs G/H$ is compact, so is $\widetilde{L}\bs {\widetilde{G}}/\widetilde{H}$.
 \end{enumerate} 
\end{property}

The statement (1) follows from \cite[Lemma~5.1.4]{repofnilp}. The statement (2) follows from \cite[Lemma~1.3(2)]{kob93}. The statement (3) is easy.

Finally, we prepare some easy lemmas which are used in Sections~\ref{Section:ProperandcocompactinsolvableLiegroups} and \ref{Section:Criterions}.

\begin{lem}\label{Lem:closedcone}
 Let $C_1$ and $C_2$ be two closed cones in $\R^n$. Then the following conditions are equivalent:
 \begin{eqenum}
  \item the pair $(C_1,C_2)$ has the (CI) property, namely, $C_1\cap C_2=\{0\}$,
  \item the pair $(C_1,C_2)$ is proper in $\R^n$.
 \end{eqenum}
\end{lem}

\begin{proof}
 Since the implication (b)$\imply$(a) is easy, we prove the implication (a)$\imply$(b). We take any $R\in\R_{>0}$ and denote by $B(R)$ the closed ball in $\R^n$. It is enough to show that $(C_1+B(R))\cap C_2$ is relatively compact.
 Set $d_0\in\R_{>0}$ as the distance between $C_1$ and $C_2\cap S^{n-1}$, where $S^{n-1}$ is the unit sphere in $\R^n$. Take any $x\in (C_1+B(R))\cap C_2$, then we have $R\geq d(C_1,x)\geq \|x\|d_0$, and so $(C_1+B(R))\cap C_2\subset B(R/d_0)$.
\end{proof}

\begin{lem}\label{Lem:properlem}
 Let $G$ be a locally compact group and $N$ its closed normal subgroup. Let $L_0$ and $H$ be subsets of $N$, and $L_1\neq \emptyset $ a subset of $G$ satisfying $L_1\pitchfork N$ in $G$. Set $L:=L_1L_0$, then the following conditions are equivalent:
 \begin{eqenum}
  \item $(\A_{S}L_0)\pitchfork H$ in $N$ for any compact set $S\subset G$, 
  \item $L_0\pitchfork H$ in $G$,
  \item $L\pitchfork H$ in $G$.
 \end{eqenum}
\end{lem}

\begin{proof}
 Since the implications (c)$\imply$(b)$\imply$(a) are easy, we prove the implication (a)$\imply$(c). Let $S\subset G$ be a compact set. We have:
 \[
  SLS\inv \cap H\subset SL_1S\inv (\A_SL_0)\cap H=(SL_1 S\inv \cap N) \A_S L_0\cap H.
 \]
 By the assumption $L_1\pitchfork N$ in $G$, we take a compact set $K\subset G$ satisfying $SL_1S\inv \cap N\subset K$. Then we have:
 \[
  (SL_1S\inv \cap N)\A_SL_0\cap H \subset K (\A_SL_0)\cap H.
 \]
 By the condition (a), the subset $K (\A_SL_0)\cap H$ is relatively compact in $G$. Therefore the condition (c) follows.
\end{proof}

\begin{lem}\label{Lem:solvproper}
 Let $L$ and $N$ be locally compact groups. Assume $L$ acts on $N$ continuously as group automorphisms. Put $G:=L\ltimes N$, then we have $L\pitchfork N$ in $G$. 
\end{lem}

\begin{proof}
 It is enough to show $L\cap (S_1\times S_2)N(S_1\times S_2)$ is relatively compact for any compact subsets $S_1\subset L$ and $S_2\subset N$. This follows from:
 \[
  L\cap (S_1\times S_2)N(S_1\times S_2)\subset L\cap S_1S_1=S_1S_1.
 \]
\end{proof}

\begin{note}\label{Note:component}
 Let $G$ be a Lie group with finite connected component and $H$ its closed subgroup. Put $G_0\subset G$ the identity component. Put $H_0:=G_0\cap H$. Then $G/H$ admits compact Clifford--Klein forms if and only if so does $G_0/H_0$.
\end{note}

\subsection{Constructors}

By Fact~\ref{Fact:contianalogy}, it is natural to define the following subgroups called constructors, of which the terminology is introduced in \cite{KY07}\footnote{see also \url{http://coe.math.sci.hokudai.ac.jp/sympo/ccyr/2006/pdf/TaroYOSHINO.pdf}}. In this subsection, we define constructors and see some basic properties.

\begin{defi}[{\cite{KY07}}]\label{Def:constructor}
 Let $G/H$ be a homogeneous space of a Lie group $G$. A closed and connected subgroup $L\subset G$ is said to be a {\it constructor} of $G/H$ if the natural action of $L$ on $G/H$ is proper and cocompact.
\end{defi}

We think constructors for homogeneous spaces of solvable type. We note:

\begin{fact}[{\cite{Hochschild}}]
 A connected subgroup of a 1-connected solvable Lie group is closed. 
\end{fact}

It is important to consider the existence of a constructor for the existence problem of compact Clifford--Klein forms.

\begin{defi}[\cite{witte}]
 Let $G$ be a Lie group and $\Gamma$ its closed subgroup. We say a closed and connected subgroup $L\subset G$ a {\it syndetic hull} of $\Gamma$ if $L$ includes $\Gamma$ cocompactly.
\end{defi}

\begin{fact}[\cite{saito,BK}]\label{Fact:solvablesyndetichull}
 Let $G$ be a 1-connected completely solvable Lie group and $\Gamma\subset G$ a closed subgroup. Then there exists a unique syndetic hull $L$ of $\Gamma$. Especially, if the space $G/H$ has a compact Clifford--Klein form $\Gamma\bs G/H$, the space $G/H$ has a constructor $L$ which is the syndetic hull of $\Gamma$.
\end{fact}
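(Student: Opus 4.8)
The plan is to establish first the existence and uniqueness of the syndetic hull of an arbitrary closed subgroup $\Gamma$ of a $1$-connected completely solvable Lie group $G$, and then to read off the statement about constructors from Fact~\ref{Fact:contianalogy}. Throughout I use that, since $G$ is $1$-connected and solvable, $\exp\colon\fg\to G$ is a diffeomorphism and every connected subgroup of $G$ is closed, $1$-connected, and of the form $\exp(\fl)$ for a subalgebra $\fl$; in particular $\exp(\fl_1)\cap\exp(\fl_2)=\exp(\fl_1\cap\fl_2)$, so the intersection of any family of connected closed subgroups is again one, and hence there is a \emph{smallest} connected closed subgroup $L_0\subseteq G$ containing $\Gamma$. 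A subalgebra, and a quotient, of a completely solvable Lie algebra is again completely solvable, because its adjoint operators are restrictions, resp.\ quotients, of those of $\fg$ and therefore have real spectrum; so $L_0$ and all the quotients appearing below remain $1$-connected completely solvable.

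For existence I would argue by induction on $\dim G$, the claim being that $\Gamma$ is cocompact in $L_0$. If $L_0\subsetneq G$ we replace $G$ by $L_0$ and invoke the induction hypothesis, so we may assume $L_0=G$, i.e.\ that $\Gamma$ lies in no proper connected closed subgroup, and must show $G/\Gamma$ is compact. Since $\fg$ is completely solvable it carries a full flag of ideals, hence a one-dimensional ideal, giving a closed normal subgroup $I\cong\R$ with $\bar G:=G/I$ of dimension $\dim G-1$. If $\Gamma\cap I\ne\{e\}$, then $\Gamma\cap I$ is cocompact in $I$, so $\Gamma I$ is closed in $G$ and its image $\bar\Gamma\subseteq\bar G$ is closed; any connected closed subgroup of $\bar G$ containing $\bar\Gamma$ pulls back to a connected closed subgroup of $G$ containing $\Gamma I\supseteq\Gamma$, hence equals $\bar G$, so $\bar G$ is the smallest connected closed subgroup containing $\bar\Gamma$; by induction $\bar\Gamma$ is cocompact in $\bar G$, and since $G/\Gamma\to G/\Gamma I\cong\bar G/\bar\Gamma$ is a fibre bundle with compact base and compact fibre $I/(\Gamma\cap I)$, $G/\Gamma$ is compact. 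I expect the main obstacle to be the remaining case, in which $\Gamma\cap I=\{e\}$ for \emph{every} one-dimensional ideal $I$: then $\Gamma I$ need not be closed and the naive passage to $\bar G$ fails. My plan here is to pass instead to the closed subgroup $A:=\overline{\Gamma I}$, which contains $\Gamma$ and $I$; since $\Gamma I$ is dense in $A$ and $I\subseteq A^{\circ}$, the image of $\Gamma$ is dense in the discrete group $A/A^{\circ}$, hence equal to it, so $A=\Gamma\cdot A^{\circ}$ with $A^{\circ}$ a connected closed (again $1$-connected completely solvable) subgroup; one then analyses $\Gamma$ inside $A$ — the image $\Gamma/(\Gamma\cap A^{\circ})$ is discrete and $\Gamma\cap A^{\circ}$ is controlled inside $A^{\circ}$ by the induction hypothesis — to conclude that $\Gamma$ is cocompact in $A$, and finally $A\subseteq L_0=G$ forces $A=G$. (Alternatively one may realise $G$ as a closed subgroup of the real upper-triangular matrices and build $L_0$ from the real algebraic hull of $\Gamma$, which is the route of \cite{saito} and \cite{BK}.)

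For uniqueness I would first record the auxiliary lemma that a $1$-connected completely solvable Lie group $M$ has no proper cocompact connected closed subgroup: picking a one-dimensional ideal $I\subseteq M$ and a connected closed $N\subsetneq M$ with $M/N$ compact, either $I\subseteq N$ and we pass to $M/I$, or $N\cap I=\{e\}$, in which case $NI$ is closed, $NI/I\cong N$, and $(M/I)/(NI/I)$ is a compact quotient of $M/N$, so $NI=M$ by induction, whence $M/N\cong I/(N\cap I)\cong\R$, a contradiction. Granting this, if $L_1,L_2$ are two syndetic hulls of $\Gamma$ then each, being a connected closed subgroup containing $\Gamma$, contains $L_0$; by the existence step $\Gamma$ is cocompact in $L_0$, so $L_i/L_0$ is a compact quotient of $L_i/\Gamma$, and the lemma gives $L_i=L_0$. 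Hence the syndetic hull of $\Gamma$ exists and is unique.

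Finally, if $G/H$ admits a compact Clifford--Klein form $\Gamma\bs G/H$, then $\Gamma$ is a discrete subgroup acting properly discontinuously and cocompactly on $G/H$, and by the first part it has a syndetic hull $L$, in which $\Gamma$ sits as a uniform lattice. Applying Fact~\ref{Fact:contianalogy} to the $L$-action on the locally compact space $G/H$: part~(2) transfers properness from the $\Gamma$-action to the $L$-action, and part~(1) transfers cocompactness; thus $L$ acts properly and cocompactly on $G/H$, so $L$ is a constructor of $G/H$ in the sense of Definition~\ref{Def:constructor}, and by construction it is the syndetic hull of $\Gamma$.
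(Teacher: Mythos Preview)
The paper does not prove this statement; it is recorded as a \emph{Fact} with references to Saito and Baklouti--K\'edim and is used as a black box thereafter. So there is no in-paper argument to compare against, and your proposal has to stand on its own.

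Your uniqueness lemma (no proper connected closed cocompact subgroup in a $1$-connected completely solvable group) and its application are correct, and the deduction of the constructor statement from Fact~\ref{Fact:contianalogy} is exactly right. The existence argument, however, has a genuine gap in what you call Case~2. First, that case really occurs under the standing hypothesis $L_{0}=G$: take $G=\R\ltimes\R^{2}$ with $t\cdot v=\mathrm{diag}(e^{t},e^{-t})v$, set $t_{0}=\log\bigl((3+\sqrt{5})/2\bigr)$, let $P\in GL(2,\R)$ diagonalise $\begin{psmallmatrix}2&1\\1&1\end{psmallmatrix}$, and put $\Gamma_{0}:=P^{-1}\Z^{2}$, $\Gamma:=\langle (t_{0},0)\rangle\ltimes\Gamma_{0}$. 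The only one-dimensional ideals of $\fg$ are the two coordinate axes in $\R^{2}$; since the eigenvectors of $\begin{psmallmatrix}2&1\\1&1\end{psmallmatrix}$ have irrational slope one checks $\Gamma_{0}$ meets neither axis, so $\Gamma\cap I=\{e\}$ for every one-dimensional ideal $I$, while $L_{0}(\Gamma)=G$. Second, your sketch for this case does not close. The sentence ``$A\subseteq L_{0}=G$ forces $A=G$'' is a non sequitur (every subset of $G$ lies in $G$), and the step ``$\Gamma$ is cocompact in $A$'' unwinds to ``$\Gamma\cap A^{\circ}$ is cocompact in $A^{\circ}$'', which is \emph{not} what the induction hypothesis gives: it only yields cocompactness of $\Gamma\cap A^{\circ}$ in its own syndetic hull inside $A^{\circ}$, possibly a proper subgroup. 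The parenthetical alternative you mention --- embedding $G$ in real upper-triangular matrices and using the real algebraic hull of $\Gamma$ --- is in fact the mechanism behind the cited references, and something of that strength is needed; the bare inductive scheme, as written, does not go through.
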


\begin{rem}
 \label{Rem:kobconj}
 The assumption of complete solvability in the above fact is crucial. In fact, a solvable Lie group $G$ may have a discrete subgroup $\Gamma$ without its syndetic hulls (see Example \ref{ex:ceforkobconj}).
\end{rem}

\section{Properness and cocompactness in solvable Lie groups}\label{Section:ProperandcocompactinsolvableLiegroups}

In this section, we show some criterions to check properness and cocompactness in 1-connected solvable Lie groups, which are used to show the main theorem. The main results in this section are Propositions \ref{Prop:solvablecri}, \ref{Prop:propciequiv} and \ref{Prop:criofccp}.

\subsection{Freeness and the property (CI) in solvable Lie groups}

In this subsection, we review some criterions for freeness and the property (CI) in solvable Lie groups.

First, the following note gives a criterion of the property (CI) for 1-connected nilpotent Lie groups.

\begin{note}[\cite{KN}]\label{Note:nilpci}
 Let $G$ be a 1-connected nilpotent Lie group, and $L$ and $H$ its connected subgroups. Then the following conditions are equivalent:
 \begin{eqenum}
  \item The pair $(L,H)$ has the property (CI),
  \item $\Ad_G \fl\cap \fh =\{0\}$,
  \item $\fl\cap\Ad_G \fh=\{0\}$.
 \end{eqenum}
 Here, $\fl$ and $\fh$ are the Lie algebras of $L$ and $H$, respectively.
\end{note}

This note is easily shown by using the diffeomorphism $\exp:\fg\to G$. It is easy to show the following note in the same way.

\begin{note}\label{Note:nilpci2}
 Note~\ref{Note:nilpci} also holds under the assumptions that $G$ is an arbitrary Lie group and there exists a 1-connected closed normal nilpotent subgroup $N\subset G$ satisfying $L,H\subset N$.
\end{note}

Next, we review the following:

\begin{fact}[{\cite[Theorem~2.3]{Hochschild}}]\label{Fact:keyforcifree}
 A compact subgroup of a 1-connected solvable Lie group is trivial.
\end{fact}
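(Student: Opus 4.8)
The plan is to prove that any compact subgroup $K$ of a $1$-connected solvable Lie group $G$ is trivial by combining the structure theory of maximal compact subgroups with the fact that a compact connected solvable Lie group is a torus. First I would reduce to the maximal case: since a compact subgroup of a Hausdorff group is automatically closed, $K$ is a closed (hence embedded) Lie subgroup, and by the Cartan--Iwasawa--Malcev--Mostow theorem every compact subgroup of a connected Lie group is contained in a maximal compact subgroup. So it suffices to show that a maximal compact subgroup $K$ of $G$ is trivial. Because $G$ is connected, $K$ is connected, and the same structure theorem supplies a diffeomorphism $G \cong K \times \mathbb{R}^{d}$ under which $K$ is a deformation retract of $G$; hence $\pi_1(K) \cong \pi_1(G)$.

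Next I would exploit the two remaining hypotheses. Because $G$ is $1$-connected, $\pi_1(G) = 0$, so $\pi_1(K) = 0$ and $K$ is a $1$-connected compact Lie group. Because $K$ is a subgroup of the solvable group $G$, it is itself solvable; the Lie algebra of a connected compact Lie group is reductive, $\mathfrak{k} = Z(\mathfrak{k}) \oplus [\mathfrak{k},\mathfrak{k}]$ with $[\mathfrak{k},\mathfrak{k}]$ semisimple of compact type, and solvability forces $[\mathfrak{k},\mathfrak{k}] = 0$, so $\mathfrak{k}$ is abelian and $K$ is a torus. A torus $T^{k} = \mathbb{R}^{k}/\mathbb{Z}^{k}$ has $\pi_1(T^{k}) \cong \mathbb{Z}^{k}$, so $\pi_1(K) = 0$ forces $k = 0$ and $K = \{e\}$. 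Every compact subgroup therefore lies in $\{e\}$, which proves the statement.

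The main obstacle is the structural input that the maximal compact subgroup carries all the topology of $G$, namely that $G$ is homeomorphic to $K \times \mathbb{R}^{d}$ and thus $\pi_1(G) \cong \pi_1(K)$; this is the one genuinely deep ingredient, while the reductivity argument identifying $K$ as a torus and the $\pi_1$ computation are routine. An alternative route that sidesteps the maximal compact subgroup machinery is to show directly that $G$ is torsion-free: a $1$-connected solvable Lie group is an iterated extension whose successive quotients are copies of $\mathbb{R}$ (coming from a chain of codimension-one ideals of $\mathfrak{g}$), and torsion-freeness is inherited through extensions of a torsion-free group by a torsion-free group. Granting this, the identity component $K^{0}$ of any compact subgroup is a compact connected solvable group, hence a torus; a nontrivial torus contains elements of finite order, contradicting torsion-freeness, so $K^{0} = \{e\}$ and $K$ is finite, whence $K = \{e\}$ again by torsion-freeness. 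In this second approach the delicate point is arranging the extension chain with genuinely closed normal subgroups, so that the successive quotients are honestly $\mathbb{R}$; this is handled via canonical coordinates of the second kind on $G$.
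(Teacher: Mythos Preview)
The paper does not supply its own proof of this statement: it is quoted as a known result from Hochschild \cite[Theorem 2.3]{Hochschild} and used as a black box to deduce Proposition~\ref{Prop:solvablefree}. There is therefore nothing in the paper to compare your argument against.

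Your proposal is mathematically sound. Both routes you outline are standard and correct: the first uses the Cartan--Iwasawa--Malcev theorem to identify a maximal compact subgroup $K$ as a deformation retract of $G$, whence $\pi_1(K)=\pi_1(G)=0$, and then observes that a compact connected solvable Lie group is a torus, forcing $K=\{e\}$. The second shows directly that a $1$-connected solvable Lie group is torsion-free via a chain of extensions by $\R$. Either argument would serve as a self-contained replacement for the citation, should one want to avoid quoting Hochschild; the first is quicker to state but imports the heavier structural theorem, while the second is more elementary once the extension chain (equivalently, canonical coordinates of the second kind) is set up.
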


We have two corollaries from this fact.

\begin{cor}\label{Cor:solvablefree}
 Let $G$ be a 1-connected solvable Lie group, and $L$ and $H$ closed subgroups of $G$. Then the following conditions are equivalent.
 \begin{eqenum}
  \item The pair $(L, H)$ is free in $G$.
  \item The pair $(L, H)$ has the property (CI) in $G$.
 \end{eqenum}
\end{cor}

\begin{rem}
 For a 1-connected exponential solvable Lie group $G$, the above statement was proven by Baklouti and K\'edim \cite{BK}.
\end{rem}

By using Corollary~\ref{Cor:solvablefree} and Property \ref{Property:basicproperty1}, we have:

\begin{note}\label{note:properisfree} 
Let $G$ be a 1-connected solvable Lie group, and $L$ and $H$ its closed subgroups.
 \begin{enumerate}
  \item If the pair $(L, H)$ is proper, the quotient space $L\bs G/H$ has a manifold structure. 
  \item Let $\Gamma \subset L$ be a uniform lattice. Assume the action $L\acts G/H$ is proper and cocompact, then $\Gamma\bs G/H$ is a compact Clifford--Klein form.
 \end{enumerate}
\end{note}

\subsection{Constructors in solvable homogeneous space}

In this subsection, we show some propositions for the existence of constructors in solvable homogeneous spaces. First, we see a criterion of the cocompactness of the $L$-action.

\begin{prop}\label{Prop:solvablecri}
 Let $G$ be a 1-connected solvable Lie group, and $L$ and $H$ its connected subgroups. Assume the $L$-action on $G/H$ is proper. Then the following conditions are equivalent:
 \begin{eqenum}
  \item the space $L\bs G/H$ is compact, 
  \item $G=LH$,
  \item $\fg=\fl\oplus\fh$ as a linear space.
 \end{eqenum}
 Here, $\fg$, $\fh$ and $\fl$ are Lie algebras of $G, H$ and $L$, respectively.
\end{prop}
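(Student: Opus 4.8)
The implications (ii)$\Rightarrow$(i) and (iii)$\Rightarrow$(ii) are the easy directions. For (iii)$\Rightarrow$(ii): if $\fg = \fh \oplus \fl$ as vector spaces, then since $G$ is $1$-connected solvable, the multiplication map $L \times H \to G$, $(l,h) \mapsto lh$, is a diffeomorphism (this is the standard fact that a $1$-connected solvable Lie group is diffeomorphic to the product of a subgroup and a complementary subgroup whenever the Lie algebras are complementary; it follows by induction along a chain of normal subgroups, or directly from the fact that $\exp$ is a diffeomorphism-like decomposition). Hence every element of $G$ lies in a single $L$-$H$ double coset, so $L\bs G/H$ is a point. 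And (ii)$\Rightarrow$(i) is trivial since a point is compact.

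The content is in (i)$\Rightarrow$(iii). The plan is to argue by dimension-counting together with a connectedness/properness argument. First I would observe that properness of the $L$-action, by Corollary \ref{Cor:properisfree} (or Proposition \ref{Prop:solvablefree} plus Remark \ref{Rem:basicproperty2}), forces $L \cap H = \{e\}$ and more: for any $g \in G$ the set $L \cap gHg^{-1}$ is trivial, so in particular $\fl \cap \fh = \{0\}$ (the Lie algebra of $L\cap H$). Thus $\dim \fl + \dim \fh \le \dim \fg$, and the map $L \times H \to G$ is an injective immersion onto the (locally closed) subset $LH = L e H$. Now if $L\bs G/H$ is compact, I want to conclude that $LH = G$ and that $\dim \fl + \dim \fh = \dim \fg$. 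The second follows from the first by invariance of domain: $LH$ is the image of an injective immersion of an $(\dim\fl + \dim \fh)$-manifold, and if it equals the $\dim\fg$-manifold $G$ the dimensions must match, giving (iii).

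So the real obstacle is showing $LH = G$, i.e. that the single double coset $LeH$ is everything, given only that the quotient $L\bs G/H$ is compact. Here I would use the solvable structure crucially — compactness alone does not give this in general (e.g. for reductive $G$). The approach: the orbit map gives a continuous open surjection $G \to L\bs G/H$; the $L$-orbit of $eH$ in $G/H$ is $LH/H$, an immersed submanifold, and properness of the action means all $L$-orbits in $G/H$ are closed embedded submanifolds, all of the same dimension $\dim\fl$ (since $L\cap gHg^{-1}$ is trivial for all $g$, so all stabilizers are trivial). Thus $G/H$ is partitioned into closed embedded $L$-orbits of constant dimension, i.e. $G/H \to L\bs G/H$ is a fiber bundle (a principal-type bundle) over the compact base; since $G/H$ is $1$-connected solvable-homogeneous and hence diffeomorphic to a Euclidean space, and the total space of a fiber bundle over a compact base with Euclidean-space fibers that is itself contractible forces the base to be a point — more carefully, I would invoke that $L\bs G/H$ is a manifold (by properness and freeness) which is compact, and use that $G/H \cong \mathbb{R}^{\dim G - \dim H}$ deformation-retracts, so $L\bs G/H$, being a quotient that is also a $K(\,\cdot\,)$-type manifold... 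Actually the cleanest route: since $L$ acts properly and freely, $L\bs G/H$ is a manifold of dimension $\dim G - \dim H - \dim L \ge 0$; it is compact; but $G/H$ is diffeomorphic to $\mathbb{R}^m$, and a proper free action of the (solvable, hence contractible) group $L$ on $\mathbb{R}^m$ has quotient homotopy equivalent to $\mathbb{R}^m$, hence contractible, hence a compact contractible manifold, which can only be a point (a compact manifold with the homology of a point has dimension $0$). Therefore $\dim G - \dim H - \dim L = 0$ and $L\bs G/H$ is a point, giving both (ii) and (iii). The main thing to be careful about is justifying that the quotient of $\mathbb{R}^m$ by the proper free action of the $1$-connected solvable group $L$ is contractible — this I would get from the fact that such an $L$ is itself diffeomorphic to a Euclidean space and the action, being proper and free, realizes $\mathbb{R}^m \to L\bs\mathbb{R}^m$ as a principal $L$-bundle, which is trivial since $L$ is contractible, so $\mathbb{R}^m \cong L \times (L\bs \mathbb{R}^m)$ and the quotient is a retract of $\mathbb{R}^m$.
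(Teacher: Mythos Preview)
Your argument for the substantive direction (i)$\Rightarrow$(ii) is essentially the paper's: both show that properness together with freeness (Corollary~\ref{Cor:properisfree}) makes $L\bs G/H$ a manifold, then use contractibility of $L$ and of $G/H$ to conclude that this quotient is a compact contractible manifold, hence a point. The paper packages the contractibility step as Lemma~\ref{Lem:contractiblequotient}, using the homotopy long exact sequence of the fibration $L\to G/H\to L\bs G/H$ and Whitehead's theorem, while you trivialize the principal $L$-bundle directly; these are interchangeable. The subsequent dimension count giving (iii) is the same as the paper's (b)$\Rightarrow$(c).

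The genuine gap is in your ``easy'' direction (iii)$\Rightarrow$(ii). The asserted standard fact --- that the multiplication $L\times H\to G$ is a diffeomorphism whenever $G$ is $1$-connected solvable and $\fg=\fl\oplus\fh$ --- is false, and neither of your suggested justifications rescues it: $\exp$ need not be a global diffeomorphism, and even when it is one still has $\exp(\fl)\exp(\fh)\neq\exp(\fl+\fh)$ in general. For a concrete counterexample take $G=\mathrm{Aff}^+(\R)$, with Lie algebra spanned by $A,B$ subject to $[A,B]=B$, and set $\fl=\R A$, $\fh=\R(A+B)$. Then $\fg=\fl\oplus\fh$, but writing group elements as pairs $(a,b)$ with $a>0$ one computes $L=\{(a,0)\}$, $H=\{(a,a-1)\}$, and $LH=\{(c,d):c>d\}\subsetneq G$. (This does not contradict the proposition: in this example the $L$-orbit of $eH$ is open but not closed, so the $L$-action on $G/H$ is not proper.) The paper's proof of (c)$\Rightarrow$(b) uses the standing properness hypothesis in an essential way: the $L$-orbit of $eH$ in $G/H$ is closed because the action is proper, and open because the action is free (Proposition~\ref{Prop:solvablefree}) and $\dim L=\dim G-\dim H$; connectedness of $G/H$ then forces the orbit to be everything. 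You should replace the appeal to the ``standard fact'' by this argument.
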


\begin{proof}
Since the implications (b)$\imply$(a) is clear, we first show the implication (a)$\imply$(b). The condition (b) is equivalent to the transitivity of the $L$-action on $G/H$, so we are enough to show that the space $L\bs G/H$ consists of one point. Since $G$ is a 1-connected solvable Lie group, and $H$ and $L$ are connected subgroups, $G/H$ and $L$ are contractible by Note~\ref{Note:1conn} and Lemma~\ref{Lem:contractiblelemma} below.
By Note~\ref{note:properisfree} (1), the quotient space $L\bs G/H$ has a manifold structure, it is one point by Lemma~\ref{Lem:contractiblequotient}.
 Next, we show the implication (b)$\imply$(c). Since $G/H$ is an $L$-orbit, we have $\dim(G/H)\leq \dim L$. On the other hand, by the properness of the $L$-action we have $\fl\cap \fh=\{0\}$ (Note~\ref{Note:1connproper}). Then we have $\dim G\geq \dim H+\dim L$ and so we obtain $\dim G=\dim H +\dim L$, which implies $\fg =\fl\oplus\fh$.\par
 Finally, we check the implication (c)$\imply$(b). We consider the $L$-orbit of the origin point $eH$. The orbit is closed since the $L$-action on $G/H$ is proper. On the other hand, since the $L$-action is free  by Corollary~\ref{Cor:solvablefree}, the dimension of $L$-orbit equals to $\dim L=\dim(G/H)$. Hence the $L$-orbit is open. Since $G/H$ is connected, $G/H$ coincides with the $L$-orbit.
\end{proof}

\begin{note}[\cite{bourbaki}]\label{Note:1conn}
 Any 1-connected solvable Lie group is diffeomorphic to a Euclidian space.
\end{note}

\begin{note}\label{Note:1connproper}
 Let $H$ and $L$ be closed subgroups of a 1-connected solvable Lie group $G$. Assume $L\pitchfork H$ in $G$, then we have $\fl\cap \fh=\{0\}$.
\end{note}

\begin{lem}\label{Lem:contractiblequotient}
 Suppose a contractible Lie group $G$ acts on a contractible manifold $M$. If the quotient space $G\bs M$ is a compact manifold, it consists of one point.
\end{lem}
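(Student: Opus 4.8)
The plan is to prove that the quotient $B:=G\bs M$ is contractible, and then to observe that a nonempty compact contractible manifold without boundary must be a single point.

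\textbf{Step 1: $B$ is weakly contractible.} In the situation where the lemma is applied the quotient $G\bs M$ is a manifold because the $G$-action on $M$ is proper and free (cf. Proposition~\ref{Prop:solvablecri} and Corollary~\ref{Cor:properisfree}); then the orbit map $p\colon M\to B$ is a principal $G$-bundle, in particular a Serre fibration with fiber $G$. From the long exact sequence of homotopy groups
\[
 \cdots \longrightarrow \pi_k(G)\longrightarrow \pi_k(M)\longrightarrow \pi_k(B)\longrightarrow \pi_{k-1}(G)\longrightarrow\cdots
\]
and the contractibility of $G$ and of $M$ one reads off $\pi_k(B)=0$ for every $k\ge 1$. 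Moreover $B$ is path-connected, being the continuous image of the path-connected space $M$, so $\pi_0(B)$ is trivial as well; hence $B$ is weakly contractible.

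\textbf{Step 2: $B$ is contractible, and then a point.} A smooth manifold has the homotopy type of a CW complex, so by Whitehead's theorem the constant map $\{\mathrm{pt}\}\to B$ is a homotopy equivalence, i.e. $B\simeq\{\mathrm{pt}\}$; in particular $\widetilde H_*(B;\mathbb{Z}/2)=0$. (Alternatively one can bypass Whitehead and get $\widetilde H_*(B;\mathbb{Z}/2)=0$ directly from the Serre spectral sequence of $p$, whose $E^2$-page collapses because $G$ is $\mathbb{Z}/2$-acyclic.) Now suppose $\dim B=n\ge 1$. Since $B$ is a nonempty compact connected $n$-manifold without boundary, it carries a mod-$2$ fundamental class, so $H_n(B;\mathbb{Z}/2)\cong\mathbb{Z}/2\ne 0$, contradicting the vanishing just established. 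Therefore $n=0$, and a nonempty compact connected $0$-manifold is exactly one point, so $G\bs M$ consists of a single point.

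I expect the substantive content of the argument (fibration exact sequence, mod-$2$ fundamental class) to be entirely routine; the only thing requiring care is the hypotheses bookkeeping in Step~1: one needs the $G$-action to be proper and free for $p$ to be a principal bundle — which is precisely what makes $G\bs M$ a manifold in the applications — and one must read ``manifold'' as ``manifold without boundary'', which is again automatic there since a homogeneous space and its proper free quotients are boundaryless. This delineation of hypotheses, rather than any genuine difficulty, is the main point to get right.
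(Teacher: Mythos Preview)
Your proof is correct and follows essentially the same route as the paper: both use the long exact homotopy sequence of the fibration $G\to M\to G\backslash M$ together with Whitehead's theorem to conclude that $G\backslash M$ is contractible, and then argue that a nonempty closed contractible manifold must be a point. The only real difference is in this last step. The paper observes that contractibility makes every vector bundle over $G\backslash M$ trivial, hence $G\backslash M$ is orientable, and then uses Stokes' theorem to show that any volume form integrates to zero---a contradiction if $\dim(G\backslash M)\ge 1$. You instead invoke the mod-$2$ fundamental class, which gives $H_n(G\backslash M;\mathbb{Z}/2)\ne 0$ directly without discussing orientability. Both arguments are standard; yours is marginally cleaner in that it sidesteps the orientation issue entirely. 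Your remarks on the proper/free hypotheses needed for $p$ to be a genuine fiber bundle are well taken and apply equally to the paper's proof, which tacitly assumes this.
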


This lemma is an immediate consequence of the following two lemmas.
\begin{lem}\label{Lem:contractiblelemma}
 Let $G$ be a contractible topological group and act on a contractible space $M$. Then $G\bs M$ is contractible.
\end{lem}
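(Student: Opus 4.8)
The plan is to exhibit the quotient map $q\colon M\to G\bs M$ as a fibre bundle with fibre $G$ and then read off the homotopy groups of $G\bs M$ from the long exact homotopy sequence of this bundle. One first observes that the lemma is applied only in a restricted setting --- in Lemma~\ref{Lem:contractiblequotient} and in the proof of Proposition~\ref{Prop:solvablecri}, a $1$-connected solvable (hence contractible) Lie group acts freely and properly on a contractible manifold --- and these extra features are really needed: for an unrestricted action of an arbitrary topological group the conclusion is false, as $\mathbb Z$ acting on $\mathbb R$ by translations (quotient $S^1$) shows. So I would carry out the argument assuming that $G$ acts freely and properly and that the spaces involved are manifolds. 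Then the slice theorem for proper actions, together with the quotient manifold theorem, shows that $q$ is locally trivial with fibre $G$; in particular $G\bs M$ is a manifold and $q$ is a Serre fibration.

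The second step is to run the long exact homotopy sequence of the fibration $G\hookrightarrow M\xrightarrow{\,q\,}G\bs M$: for each $k\ge 1$ the exact segment
\[
 \pi_k(G)\longrightarrow\pi_k(M)\longrightarrow\pi_k(G\bs M)\longrightarrow\pi_{k-1}(G)
\]
has vanishing outer terms, because $G$ and $M$ are contractible, so $\pi_k(G\bs M)=0$ for all $k\ge 1$. Since $G\bs M$ is a continuous image of the path-connected space $M$, it is path-connected, so $\pi_0(G\bs M)$ is trivial too. Hence $G\bs M$ is weakly contractible, and being a manifold it has the homotopy type of a CW complex, so Whitehead's theorem upgrades this to contractibility.

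An alternative route avoids fibrations: when the action is free and proper, $q$ is a principal $G$-bundle over the paracompact base $G\bs M$; as $G$ is contractible, its classifying space $BG$ is contractible, so the classifying map $G\bs M\to BG$ is null-homotopic and the bundle is trivial, $M\cong(G\bs M)\times G$. Since $(G\bs M)\times\{e\}\hookrightarrow(G\bs M)\times G$ is a homotopy equivalence, $G\bs M\simeq M\simeq\mathrm{pt}$. The only genuinely nontrivial point, in either approach, is the local triviality of $q$ --- that it really is a $G$-bundle rather than an arbitrary quotient map --- which is exactly where freeness and properness of the action are used; the homotopy bookkeeping afterwards is routine.
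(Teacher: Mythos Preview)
Your proof is correct and follows exactly the paper's line: fibre bundle $G\to M\to G\backslash M$, long exact homotopy sequence, then Whitehead. You are in fact more careful than the paper---your $\mathbb Z$-on-$\mathbb R$ example shows the stated hypotheses are too weak, and you rightly isolate freeness and properness as what makes $q$ a genuine bundle; the paper's two-line proof silently borrows these from the surrounding context (Lemma~\ref{Lem:contractiblequotient} and Proposition~\ref{Prop:solvablecri}).
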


\begin{proof}
 By the homotopy exact sequence of the fiber bundle $G\to M\to G\bs M$, we have $\pi_{i}(G\bs M)=0\ (\forall i\in \N)$. Then we have $G\bs M$ is contractible by J. H. C. Whitehead's theorem.
\end{proof}

\begin{lem}
 A contractible and closed manifold consists of one point.
\end{lem}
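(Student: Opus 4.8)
The plan is to derive a contradiction from Poincar\'e duality. First I would reduce to the connected case: a contractible space is non-empty and path-connected, so $M$ is a connected manifold; set $n \coloneqq \dim M$. If $n = 0$, then a connected $0$-dimensional manifold is a single point and we are done, so from now on suppose toward a contradiction that $n \geq 1$. Here ``closed'' is understood to mean compact without boundary, so that Poincar\'e duality is available.

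Since $M$ is contractible it has the homology of a point; in particular $H_n(M;\mathbb{Z}/2\mathbb{Z}) = 0$ because $n \geq 1$. On the other hand, $M$ is a closed connected $n$-manifold, so Poincar\'e duality with $\mathbb{Z}/2\mathbb{Z}$-coefficients (which requires no orientability hypothesis, the mod-$2$ fundamental class always existing) yields
\[
 H_n(M;\mathbb{Z}/2\mathbb{Z}) \cong H^0(M;\mathbb{Z}/2\mathbb{Z}) \cong \mathbb{Z}/2\mathbb{Z} \neq 0,
\]
which is the desired contradiction. Hence $n = 0$ and $M$ consists of one point. (Alternatively one may avoid $\mathbb{Z}/2\mathbb{Z}$-coefficients: a contractible manifold is simply connected, hence orientable, and integral Poincar\'e duality then gives $H_n(M;\mathbb{Z}) \cong \mathbb{Z}$, again contradicting $\widetilde{H}_\ast(M) = 0$ for $n \geq 1$.)

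There is no genuine obstacle in this argument; the only point demanding mild care is the orientability issue, which is precisely why passing to $\mathbb{Z}/2\mathbb{Z}$-coefficients is the cleanest route, and one should keep in mind that the manifolds occurring in Lemma~\ref{Lem:contractiblequotient} are finite-dimensional and that ``closed'' is taken in the sense of ``compact and boundaryless'', so that the duality theorem indeed applies.
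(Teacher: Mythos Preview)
Your argument is correct. The paper takes a different but equally short route: it observes that a contractible manifold has trivial tangent bundle and is therefore orientable, chooses a volume form $\omega$, and notes that contractibility forces $\omega$ to be exact, so Stokes' theorem gives $\int_M \omega = 0$, contradicting the positivity of the volume integral when $\dim M \geq 1$. Your approach via Poincar\'e duality with $\mathbb{Z}/2\mathbb{Z}$-coefficients is arguably cleaner in that it sidesteps orientability altogether, while the paper's de Rham/Stokes argument stays within smooth-manifold language and avoids invoking duality; your parenthetical alternative (simply connected $\Rightarrow$ orientable, then integral duality) is essentially the homological counterpart of the paper's differential-form argument.
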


\begin{proof}
 Let $M$ be a contractible and closed manifold. Since an arbitrary vector bundle over $M$ is trivial, $M$ is orientable. For a volume element $\omega$ of $M$, we have $\int_M\omega=0$ by Stokes' theorem.
\end{proof}




\subsection{Properness, the property (CI) and cocompactness}

To check the property (CI) is easier than the properness. The property (CI) was introduced by T. Kobayashi and the equivalence of properness and the property (CI) was shown for any pair of closed reductive subgroups of linear reductive Lie groups \cite{kob92b}. Lipsman considered an extension of Kobayashi's theory to non-reductive case \cite{lipsman}. For 1-connected nilpotent Lie groups, the equivalence of the properties is known as Lipsman's conjecture. About this conjecture, the following results have been obtained so far. The properness and the property (CI) are equivalent for less than or equal to 3-step nilpotent Lie groups \cite{nasrin,3step,BK05} and not necessarily equivalent for 4-step nilpotent Lie groups \cite{3step}. In this subsection, we generalize the following Nasrin's result (Fact~\ref{Fact:nasrin}) in Proposition~\ref{Prop:propciequiv} and introduce a criterion of cocompactness in a similar setting in Proposition~\ref{Prop:criofccp}.

\begin{fact}[\cite{nasrin} for 2-step nilpotent Lie groups, \cite{3step,BK05} for 3-step nilpotent Lie groups]\label{Fact:nasrin}
 Let $G$ be a 1-connected 3-step nilpotent Lie group, and $L$ and $H$ its connected subgroups. Then $L\pitchfork H$ in $G$ if and only if the pair $(L, H)$ has the (CI) property in $G$.
\end{fact}


\begin{setting}\label{set:propersetting}
Let $G$ be a Lie group, and $N$ its closed normal subgroup. Assume $N$ is 1-connected nilpotent. Let $L_0$ and $H$ be connected subgroups of $N$, and $L_1$ a closed subgroup of $\mathcal{N}_G(L_0)$ (see Notation~\ref{Not:basic}) satisfying $L_1\pitchfork N$ in $G$. Set $L:=L_1L_0=L_0L_1$.
\end{setting}

\begin{prop}\label{Prop:propciequiv}
Under Setting~\ref{set:propersetting}, we additionally assume $N$ is 2-step nilpotent. Then the following conditions are equivalent:
\begin{eqenum}
 \item the pair $(L_0,H)$ has the property (CI) in $G$,
 \item $L\pitchfork H\ \text{in}\ G$.
\end{eqenum}
\end{prop}

\begin{proof}
It is enough to show that the following four conditions are equivalent:
 \begin{enum}{\roman}
  \item the pair $(L_0,H)$ has the property (CI) in $G$,
  \item $\Ad_G\fl_{0}\cap\fh=\{0\}$,
  \item $(\A_{S}L_0)\pitchfork H$ in $N$ for any compact set $S\subset G$,
  \item $L\pitchfork H$ in $G$.
 \end{enum}
Here, $\fl_0$, $\fh$ and $\fn$ are the Lie algebras of $L_0$, $H$ and $N$, respectively.
Since the exponential map $\exp:\fn\to N$ is diffeomorphism, we denote by $\log$ its inverse.\par
The implication (i)$\imply$(ii) comes from Note~\ref{Note:nilpci2}, the implication (iii)$\imply$(iv) holds by Lemma~\ref{Lem:properlem}, and the implication (iv)$\imply$(i) follows from Property~\ref{Property:basicproperty1}(4).\par
 Then we show the implication (ii)$\imply$(iii). Take any compact sets $S\subset G$ and $T\subset \fn$. It is enough to show that the subset $(\exp T(\A_S L_0)\exp T)\cap H$ is compact. Since $N$ is 2-step nilpotent, for $X\in \fn$ we have:
 \begin{align*}
  \exp T \exp X \exp T&\subset \exp\left(\left(\id +\dfrac{1}{2}\ad(T-T)\right)X+T+T+\dfrac{1}{2}[T,T]\right)\\
  &= \exp (\Ad_{S'} X+T'),
 \end{align*} 
 where $S':=\exp((T-T)/2)\subset N$ and $T':= T+T+[T,T]/2\subset\fn$.
  Then we have:
 \[
  \log(\exp T (\A_{S}L_0)\exp T)\subset \Ad_{S'}\Ad_S\fl_0+T'=\Ad_{S' S}\fl_0+T'=C+T',
 \]
where $C:=\Ad_{S' S}\fl_0$ is a closed cone. On the other hand, by the condition~(ii), we have $C\cap \fh=\{0\}$. Hence the pair of the closed cones $\left(C, \fh\right)$ is proper by Lemma~\ref{Lem:closedcone}. Then the subset $\left(C+T'\right)\cap \fh$ is compact and so is the subset $(\exp T(\A_S L_0)\exp T)\cap H$.
\end{proof}

Finally, we introduce criterion of cocompactness under a similar situation in Proposition~\ref{Prop:propciequiv}.

\begin{prop}\label{Prop:criofccp}
 Under Setting \ref{set:propersetting}, assume $L_0\pitchfork H$ in $G$. Then the following conditions are equivalent:
 \begin{eqenum}
  \item The $L$-action on $G/N$ and the $L_0$-action on $N/H$ are cocompact.
  \item The $L$-action on $G/H$ is cocompact.
 \end{eqenum}
\end{prop}

\begin{proof}
First, we show the implication (a)$\imply$(b). By the assumption $L_0\pitchfork H$ in $G$ and the cocompactness of the $L_0$-action, for any $g\in G$, we have $\A_g(L_0)\pitchfork H$ in $N$ and $\Ad_{g}\fl_0\oplus\fh=\fn$, and so $\A_{g}(L_0)H=N$ (Proposition~\ref{Prop:solvablecri}). Take a compact subset $C\subset G$ satisfying $G=LCN$. We are enough to show $G=LCH$. Then we get:
	\[
	 G=L CN=\bigcup_{c\in C}LcN=\bigcup_{c\in C}Lc\A_{c\inv}(L_0)H=\bigcup_{c\in C}LL_0cH=LCH.
	\]
Next, we show the implication (b)$\imply$(a). By $H\subset N$, the $L$-action on $G/N$ is cocompact, so we show that the $L_0$-action is compact. Take a compact subset $C\subset G$ satisfying $G=LCH$. Then we have:
	\[
	 N=L_0L_1 CH\cap N=L_0(L_1C\cap N)H.
	\]
	By the condition $L_1\pitchfork N$, we have $L_1C\cap N$ is compact, and so have the $L_0$-action on $N/H$ is cocompact.
\end{proof}

\section{Indecomposable symmetric triples with signature $(2,2)$}\label{Section:Indecomposablesymmetrictriples}

In this section, we review the classification of symmetric triples with signature $(2,2)$ given by Kath--Olbrich (Fact~\ref{Fact:classification}). We use another notation for our calculations. \par
We introduce solvable Lie algebras $\fg_{D,D'}$ and indecomposable symmetric triples $\ft_{D,D'}:=(\fg_{D,D'},\sigma,g)$, which are most part of the pseudo-Riemannian symmetric triple with signature $(2,2)$ (see Fact~\ref{Fact:classification}). We also define a symmetric triples $(\fg_{\rm nil},\sigma,g_{\pm})$ in Subsection~4.3.\par
In this section, we use the following:
\begin{notation}\label{notation:4}
\begin{itemize}
  \item $\fh_n:=\R^{2n}\oplus \R $ : the $(2n+1)$-dimensional Heisenberg Lie algebra equipped with the following non-trivial brackets:
	\[
	 [X,Y]:=\omega(X,Y),\quad X,Y\in\R^{2n}
	\]
	where $\omega$ is a symplectic form on $\R^{2n}$. Put $\fh_0:=\R$.	
  \item $\fz:=\R$ : the center of $\fh_n$, $Z:=1\in\fz$.
  \item $H_n$ : Heisenberg Lie group, namely, 1-connected Lie group whose Lie algebra is $\fh_n$.
\end{itemize} 
\end{notation}

\begin{rem}\label{Rem:correspondance}
The correspondence between the spaces of Fact~\ref{Fact:classification} and the list of Kath--Olbrich \cite[Theorem~7.1]{OK} is given by the following table:\\
 \begin{table}[h]
 \begin{center}
  \begin{tabular}{c|c|c|c|c|c}
    \cite[Theorem~7.1]{OK} & (1) (a)(b)& (1)(c) & (2)(a)(b) &(3)&(4)\\\hline
    Fact~\ref{Fact:classification} & II(a),(d) & II(b) & I &II(c) $D'=D'_\ep$ & II(c) $D'=-D'_\ep$
  \end{tabular},
 \end{center}
 where $D'_\ep :=  \begin{pmatrix}0&1\\ 1&\ep\end{pmatrix}$ and $\ep=\pm1$.
 \end{table}
\end{rem}

\subsection{Definition of symmetric triples $\ft_{D,D'}$}\label{gddexample}

In this subsection, we define symmetric triples $\ft_{D,D'}$ and see some properties.

\begin{defi}\label{Def:gw}
 We think $\fs\fp(n,\R)$ as a subalgebra of $\Der \fh_n$. For $W\in\fs\fp(n,\R)$, we define:
 \[
  \fg_{W}:= \R W\ltimes \fh_n\subset \fs\fp(n,\R)\ltimes \fh_n.
 \]
\end{defi}

In the following, we use a basis $(X_1,\cdots,X_n,Y_1,\cdots,Y_n)$ of $\R^{2n}$ such that: 
\[
 \omega=
 \begin{pmatrix}
  &I_n\\ -I_n&
 \end{pmatrix}.
\]

Using this basis, we identify $\fs\fp(n,\R)\simeq \set{W\in M(2n,\R)}{\omega W+W^T\omega =0}$.

\begin{defi}[$\fg_{D,D'}$]\label{Def:gdd}
 For matrices $D,D'\in\sym^{(\text{reg})} (n,\R)$, we put $W:=\begin{pmatrix}&D'\\D&\end{pmatrix}\in\asp(n,\R)$. We define a subalgebra $\fg_{D,D'}\subset \fs\fp(n,\R)\ltimes\fh_{n}$ by $\fg_{D,D'}:=\fg_{W}$.
\end{defi}

\begin{lem}\label{Lem:eigenvalues}
 For $D,D'\in \sym^{(\text{reg})}(n,\R)$, we have the following statements.
 \begin{enumerate}
  \item The Lie algebra $\fg_{D,D'}$ is solvable and $\dim \fg_{D,D'}=2n+2$.
  \item The eigenvalues of $W$ are square roots of the eigenvalues of the product $DD'$.	
  \item The Lie algebra $\fg_{D,D'}$ is completely solvable if and only if all the eigenvalues of the product $DD'$ are positive real numbers.
 \end{enumerate}
\end{lem}

\begin{proof}
 Since the statements (1) and (2) are clear, we show the statement (3). In general, a Lie algebra over $\R$ is completely solvable if and only if all the eigenvalues of the adjoint representations are real. In our case, since $\fh_n$ is a nilpotent ideal, it is equivalent to the eigenvalues of $\ad (W+U)$ on $\fg_{D,D'}$ are real for any $U\in\fh_n$. By an easy calculation, with respect to the base $(W,X_1,\cdots,X_n, Y_1,\cdots, Y_n,Z)$, we have:
	\[\ad (W+U)=\begin{pmatrix}
		  0&&&\\
	    *&&D'&\\
	   *&D&&\\
	0&*&*&0	 
		     \end{pmatrix}.\]
	Hence, $\fg_{D,D'}$ is completely solvable if and only if all the eigenvalues of $W=\begin{pmatrix}0&D'\\ D&0  \end{pmatrix}$ are real. By the statement (2), it is equivalent to the condition that all the eigenvalues of the product $DD'$ are positive.
\end{proof}

\begin{propdefi}\label{propanddefi:inpro}
For $D,D'\in\sym^{(\text{reg})} (n,\R)$, set a subalgebra $\fh\coloneqq\xspan{\R}{Y_1,\cdots,Y_n}$ and a subspace $\fq\coloneqq \R W\oplus \xspan{\R}{X_1,\cdots,X_n}\oplus \fz\subset \fg_{D,D'}$. Let $g$ be the inner product on $\fq\subset \fg_{D,D'}$ defined by the following Gram matrix with respect to the basis $(W, X_1,\cdots,X_n, Z)$:
 	\[
	 g= \begin{pmatrix}
	    0&&-1\\
	    &D'^{\inv}&\\
	    -1&&0
	   \end{pmatrix}.
	\]
 Then we have:
 \begin{enumerate}
  \item $\fg_{D,D'}=\fq\oplus\fh$,
  \item $[\fq,\fh]\subset \fq$ and $[\fq,\fq]= \fh$,
  \item $g$ is $\fh$-invariant.
 \end{enumerate}
 Especially, by Note~\ref{Note:kousei}, we construct a symmetric triple $\ft_{D,D'}:=(\fg_{D,D'},\sigma, g)$ with signature $(p+1, q+1)$, where $(p,q)$ is the signature of $D'$.
\end{propdefi}

\begin{proof}
 The statements (1) and (2) $[\fq,\fh]\subset \fq$ are clear. Since $D$ is invertible, we have $[\fq,\fq]= \fh$.
 Then we show the statement (3), namely,
		\[
	 g((\ad v) X, Y)+g(X,(\ad v) Y)=0\quad (\forall X,Y\in\fq, \forall v\in\fh).
	\]
Regard $g$ as the representation matrix of the inner product on $\fq=\R W\oplus \xspan{\R}{X_1,\cdots,X_n} \oplus \fz$ and let $A\in M(2n+2,\R)$ denote the representation matrix of the linear translation $\ad v\in\End(\fq)$. Then this condition is equivalent to the condition $gA+(gA)^T=0$, namely, $gA$ is skew symmetric. By an easy calculation, we have:
\[
 A=\begin{pmatrix}
	  &&\\
	  -D'v &&\\
	  &-v^T&\quad\\
	 \end{pmatrix},\quad
gA=\begin{pmatrix}
	  &v^T&\\
	  -v&&\\
	  &&
	  \end{pmatrix}.
\]
\end{proof}

We denote by $G_{D,D'}$ the 1-connected Lie group with the Lie algebra $\fg_{D,D'}$ and by $H\subset G_{D,D'}$ the analytic subgroup with respect to $\fh$. By Fact~\ref{Fact:transvection}, $G_{D,D'}$ is the transvection group of $G_{D,D'}/H$.

\begin{prop}\label{Prop:isomofgdd}
 For any $D, D'\in\sym^{(\text{reg})}(n,\R)$, the symmetric triple $\ft_{D,D'}$ is reducible and indecomposable.
\end{prop}

\begin{proof}
 Since the subspace $\fz\subset \fq$ is $\ad \fh$-invariant, the symmetric triple $\ft_{D,D'}$ is reducible. Then we show the indecomposability.
 Let $\fg_{D,D'}=\fg_1\oplus\fg_2$ be a non-trivial decomposition. Then we have a $\fh$-invariant decomposition $\fq=\fq_1\oplus\fq_2$. By $[\fq_i,\fq_i]=\fh_i$ for $i=1,2$, the subspace $\fq_1$ is non-trivial. Then we are enough to show that $\fq_1$ is degenerate. By Note~\ref{Note:gperp} below, it is enough to show that $\fz\subset \fq_1 \subset \mathcal{X}\oplus\fz$, where $\mathcal{X}:=\xspan{\R}{X_1,\cdots, X_n}$. For the decomposition $\fq= \R W \oplus\mathcal{X}\oplus \fz$, we set the projections $\pr_1:\fq\to \R W,\ \pr_2:\fq\to \mathcal{X}$ and $\pr_3:\fq \to \fz$, respectively.  
First, we show $\fq_1\subset \mathcal{X}\oplus \fz$. Let $v\in \fq_1$ and assume $\pr_1(v)\neq 0$. Since $D$ is invertible, we obtain $\mathcal{X}\subset (\ad\fh) v+\fz$ and $\fz\subset (\ad \fh)^2 v$, and so we have $\fq_1=\fq$, which contradicts the non-triviality of $\fq_1$. Therefore, $\pr_1(v)=0$ for any $v\in \fq_1$ and so we have $\fq_1\subset \mathcal{X}\oplus\fz$. Next, we show $\fz\subset \fq_1$. If $\pr_2(v)=0$ for any $v\in \fq_1$, we have $\fz\subset \fq_1$, so we take $v\in \fq_1$ satisfying $\pr_2(v)\neq 0$. Then we have $\fz\subset  (\ad \fh) v$, and so $\fz\subset \fq_1$.
\end{proof}

\begin{note}\label{Note:gperp}
 Put $\fz^\perp:=\set{x\in\fg_{D,D'}}{g(x,\fz)=\{0\}}$, then we have $\fz^\perp = \xspan{\R}{X_1,\cdots, X_n}\oplus \fz$.
\end{note}

\subsection{Isomorphic classes of the triples $\ft_{D,D'}$}

In this subsection, we see an isomorphic classes of the triple $\ft_{D,D'}$ (Proposition~\ref{Prop:isomclass}) and give the classification for triples with signature $(2,2)$ (Proposition~\ref{Prop:bunruiteiri}).

\begin{prop}\label{Prop:isomclass}
 For $D_1,D_2,D_1',D_2'\in\sym^{(\text{reg})} (n,\R)$, two symmetric triples $\ft_{D_1,D'_1}$ and $\ft_{D_2,D'_2}$ are isomorphic if and only if there exists $(P,k)\in GL(n,\R)\times\R_{>0}$ satisfying:
	\[
	PD_1'P^T=D_2'\ {\rm and}\ k P^TD_2P= D_1.
	\]
\end{prop}
 
\begin{proof}
  Take an isometric Lie algebra homomorphism $\phi:\fg_{D_1,D_1'}\to\fg_{D_2,D_2'}$, which is compatible with the involutions. Then $\phi$ preserves:
	\begin{itemize}
	 \item the decomposition $\fh\oplus\fq$,
	 \item the center $\fz$,
	 \item its orthogonal subspace $\fz^\perp$.
	\end{itemize}
	Therefore, with a basis $(W,X_1,\cdots,X_n,Y_1,\cdots, Y_n, Z)$, the map $\phi$ is written of the form:
	\begin{align*}
	\phi=\begin{pmatrix}
	       \ell_1&&&\\
	       *&P&&\\
	       &&Q&\\
	       *&*&&\ell_2
	      \end{pmatrix},	 
	\end{align*}
	 where $\ell_1, \ell_2\in \R^\times$ and $P, Q\in GL(n,\R)$. Since $\phi$ preserves the inner products, we have:
	\begin{align*}
	 g(\phi(X_i),\phi(X_j))&=g(X_i,X_j)\quad (i,j=1,2,\cdots,n)&&\iff P^T{D_2'}\inv P={D_1'}\inv ,\\
	 g(\phi(W),\phi(Z))&=g(W,Z)&&\iff -\ell_1\ell_2=-1.
	\end{align*}
	Since $\phi$ is a Lie algebra homomorphism, we have
	\begin{align*}
	 \phi([X_i,Y_j])&=[\phi (X_i),\phi (Y_j)] \quad (i,j=1,2,\cdots,n) & &\iff \ell_2 I_n=PQ^T,\\
	 \phi([W,X_i])&=[\phi (W),\phi (X_i)] \quad (i=1,2,\cdots,n)  &&\iff QD_1=\ell_1D_2P.
	\end{align*}
	Set $k:=\ell_1^2$, then the conditions $PD_1' P^T=D_2'$ and $k P^TD_2P=D_1$ follow from the above calculations. Conversely, if there exists $(P,k)\in GL(n,\R)\times \R_{>0}$ satisfying the condition, a direct calculation leads us that the homomorphism $\phi$ obtained by putting $*=0$ in the matrix representation above is an isomorphism of symmetric triples.
\end{proof}

\begin{defi}\label{Def:equiv}
 We denote by $(D_1, D'_1) \sim (D_2,D'_2)$ the condition of Proposition~\ref{Prop:isomclass}.
 It is easy to check that $\sim$ is an equivalence relation on $\sym^{(\text{reg})} (n,\R) \times \sym^{(\text{reg})} (n,\R)$.
\end{defi}

 In the rest of this subsection, we see the isomorphic classes of the triples $\ft_{D,D'}$ with signature $(2,2)$ with respect to this equivalence relation.

\begin{prop}\label{Prop:bunruiteiri}
 For matrices $D,D'\in \sym^{(\text{reg})}(2,\R)$, assume the signature of $D'$ is $(1,1)$. Then the following list gives a complete class representatives of symmetric triple $(\fg_{D,D'},\sigma,g)$.
  \begin{enumerate}
  \item $(D,D')=(\pm\diag{1,\nu},\diag{1,-\nu})\quad (\nu>0)$,\\ $(D,D')=(\pm\diag{1,-\nu},\diag{1,-\nu})\quad (\nu>0, \ \nu\neq 1)$, 
  \item $(D,D')=\left(Q_\nu,Q_{-\nu}\right)\quad (\nu>0)$\ (see Notation~\ref{Not:basic}),
  \item $(D,D')=\left(\begin{pmatrix}
	    \pm1&-1\\
	    -1&0
	   \end{pmatrix},
	\begin{pmatrix}
	 0& -1\\ -1&\pm 1
	\end{pmatrix}\right),\
	\left(\begin{pmatrix}
	    \pm1&-1\\
	    -1&0
	      \end{pmatrix},
	\begin{pmatrix}
	 0&1\\ 1&\mp1
	\end{pmatrix}\right)$,
   \item $(D,D')=(\pm I_{1,1},I_{1,1})$.
  \end{enumerate}
\end{prop}

\begin{proof}[\bf{Proof of Proposition~\ref{Prop:bunruiteiri}}]
 By Proposition~\ref{Prop:isomclass},\ we may and do assume $D'= I_{1,1}$. For a basis $(I_2,Q_0,I_{1,1})$ of $\sym(2,\R)$, we put $D(x,y,z):=xI_2+y Q_0+zI_{1,1}\in\sym(2,\R)$. Note that $\det D(x,y,z)=x^2-y^2-z^2$.
Then we are enough to consider the orbit of $D(x,y,z)$ with respect to the action $O(1,1)\times \R_{>0}\to \isom(\sym^{(\text{reg})}(2,\R)),\ (P,k) \mapsto (D\mapsto kPDP^T)$.
 Note that:
 \begin{align*}
  O(1,1)&=\Big\langle H(t):=\begin{pmatrix}
	       \cosh t& \sinh t\\
	       \sinh t& \cosh t
			  \end{pmatrix},\ I_{1,1},\ \pm I_2\Big\rangle_{t\in\R},\\
  H(t) D(x,y,z) H(t)^T&=D(x\cosh 2t+y\sinh 2t, x\sinh 2t+y\cosh 2t, z),\\
  I_{1,1}D(x,y,z)I_{1,1}^T&=D(x,-y,z).
 \end{align*}
 Set $G:=\gen{
  \diag{H(2t),1},
  \diag{1,\pm1,1},
  k I_3}_{t\in\R, k\in\R_{>0}}$. 
 Then we are enough to consider the orbit space of the $G$-action on $\set{(x,y,z)\in\R^3}{x^2-y^2-z^2\neq 0}\subset \R^3-\{0\}$.
 By an easy calculation, we have: 
 \begin{note}
The orbit space of the $G$-action on $\R^3-\{0\}$ is:
  \[
   \{[(\pm1,0,z)],[(0,1,z)],[(1,1,\pm 1)],[(-1,1,\pm 1)],[(0,0,\pm1)]\}_{z\in\R}.
  \]
 \end{note}
 \begin{enumerate}
 \item The orbits $[(\pm1,0,z)]$ ($z\in\R,\ z\neq \pm1$).\\
       Here, the constraint $z\neq \pm1$ comes from the condition $x^2-y^2-z^2\neq 0$. We have $(D,D')\sim (\diag{1+z,1-z},I_{1,1}),\ (\diag{-1+z,-1-z},I_{1,1})$ and:
       \begin{align*}
	(\diag{1+z,1-z},I_{1,1})&\sim \begin{cases}(\diag{1,\mu},I_{1,1})\quad (1+z>0,\mu>-1, \mu\neq 0)\\ (\diag{-1,\mu},I_{1,1}) \quad (1+z<0,\mu>1)\end{cases},\\
	(\diag{-1+z,-1-z},I_{1,1})&\sim \begin{cases}(\diag{1,\mu},I_{1,1})\quad (-1+z>0,\mu<-1)\\ (\diag{-1,\mu},I_{1,1}) \quad (-1+z<0,\mu<1, \mu\neq 0)\end{cases}.
       \end{align*}       
       Then we get $(D,D')\sim (\pm \diag{1,\mu},I_{1,1})$ for some $\mu\in\R^\times$ ($\mu\neq-1$). Putting $\nu := \sqrt{|\mu|}$ and $P:=\diag{1,\sqrt{\nu}}$, we obtain $(\diag{\pm1,\mu},I_{1,1})\sim (\pm\diag{1,\nu},\diag{1,-\nu})$ or $(\pm\diag{1,-\nu},\diag{1,-\nu}), \nu\neq 1$.
  \item The orbits $[(0,1,z)]$ ($z\in\R$).\\
	In this case, we have: 
	\[
	 (D,D')\sim \left(\begin{pmatrix}
			   z&1\\ 1&-z
			 \end{pmatrix},I_{1,1}\right )\sim \left(\begin{pmatrix}
			   1-\nu^2&\nu\\ \nu &-1+\nu^2
			 \end{pmatrix},I_{1,1}\right )
	\]
	for some $\nu\in\R_{>0}$. Take $t\in\R$ satisfying $\sinh t=\nu$ and put:
	\[
	 P:=\dfrac{e^{-t/2}}{\sqrt{2}}\begin{pmatrix}
					e^{t}&1\\1&-e^{t}
				       \end{pmatrix},
	\]
	then we have the equivalence $(D,D')\sim \left(Q_\nu, Q_{-\nu} \right)$. 
  \item The orbits $[(1,1, \pm1)],[(-1,1,\pm1)]$.\\
	In this case, we have:
	\begin{align*} (D,D')\sim
	 \left(\begin{pmatrix}
	  0&1\\ 1&2
	 \end{pmatrix},I_{1,1}\right),
	 \left(\begin{pmatrix}
	  -2&1\\ 1&0
	 \end{pmatrix},I_{1,1}\right),
	 \left(\begin{pmatrix}
	  2&1\\ 1&0
	 \end{pmatrix},I_{1,1}\right),
	 \left(\begin{pmatrix}
	  0&1\\ 1&-2
	 \end{pmatrix},I_{1,1}\right).
	\end{align*}
	By a direct calculation, we have the equivalence to the following classes, respectively. (For example, we use $P:=\dfrac{1}{2\sqrt{2}}\begin{pmatrix} 2&2\\ 1&-3\end{pmatrix}$ for the first pair.)
	\begin{align*}
	 \left(\begin{pmatrix}
		1&-1\\-1&0
	       \end{pmatrix},
	 \begin{pmatrix}
	  0&1\\ 1&-1
	 \end{pmatrix}\right), 
	 \left(\begin{pmatrix}
		-1&-1\\-1&0
	       \end{pmatrix},
	 \begin{pmatrix}
	  0&1\\ 1&1
	 \end{pmatrix}\right),\\ 
	 \left(\begin{pmatrix}
		1&-1\\-1&0
	       \end{pmatrix},
	 \begin{pmatrix}
	  0&-1\\ -1&1
	 \end{pmatrix}\right), 
	 \left(\begin{pmatrix}
		-1&-1\\-1&0
	       \end{pmatrix},
	 \begin{pmatrix}
	  0&-1\\ -1&-1
	 \end{pmatrix}\right).
	\end{align*}	
  \item The orbits $[(0,0,\pm1)]$.\\
	In this case, we have $(D,D')\sim\left(\pm I_{1,1},I_{1,1}\right)$.
 \end{enumerate}
\end{proof}

\begin{rem}
 With the natural identification $(\R^3-\{0\})/\R_{>0}\simeq \mathbb{S}^2$ (the 2-dimensional unit sphere), a picture of the parameter spaces of $G_{D,I_{1,1}}$ is given as Figure~\ref{fig1}.
 \begin{figure}[h]
  \begin{center}
  \includegraphics[width = 6cm]{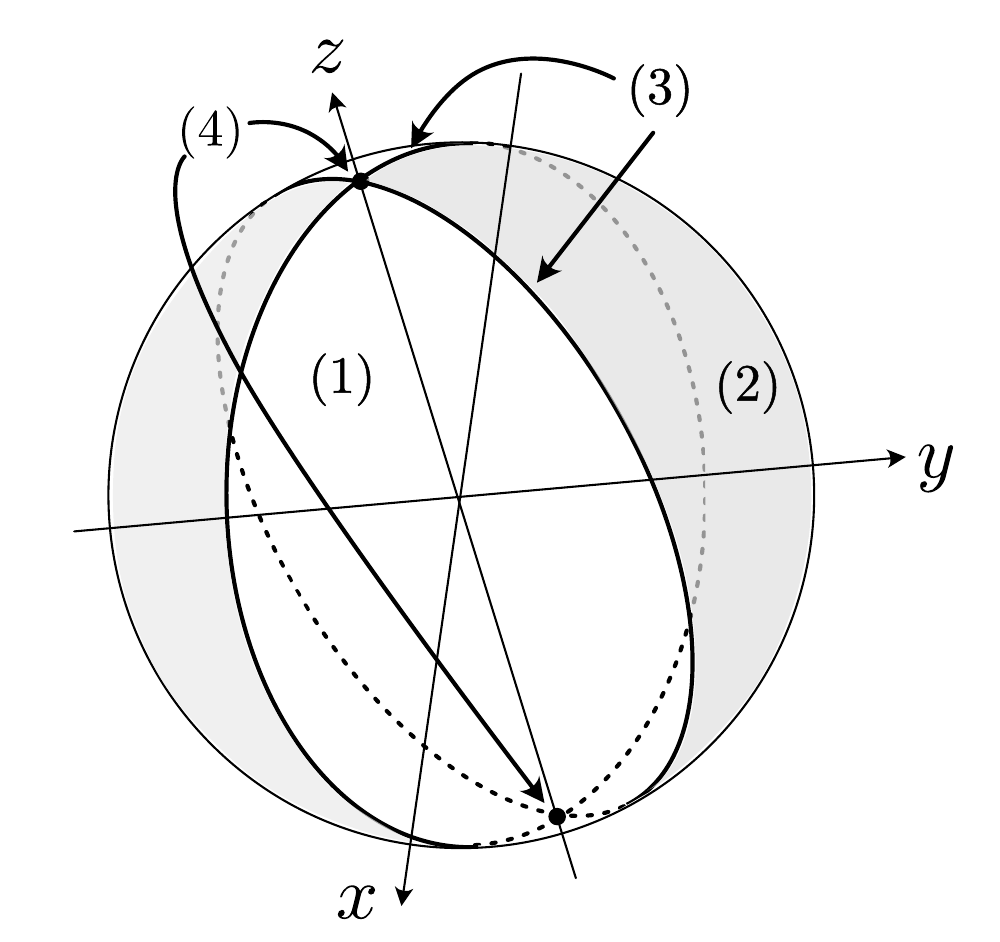}
   \caption{A picture of the parameter space of $G_{D,I_{1,1}}$}   
  \label{fig1}
  \end{center}
 \end{figure}
\end{rem}

\begin{rem}
For a symmetric triple $(\fg_{D,D'},\sigma,g)$, put $\alpha,\beta\in\C$ eigenvalues of the product $DD'$. Then the following table shows which class of Proposition~\ref{Prop:bunruiteiri} the symmetric triple $(\fg_{D,D'},\sigma,g)$ belongs to.
  \begin{table}[h]
 \begin{center}
  \begin{tabular}{c|c|c|c|c}
    class & (1) & (2) & (3) & (4)\\\hline
eigenvalues & real & not real & real &real\\\hline
  relation & $\alpha\neq \beta$ & $\alpha\neq \beta$ & $\alpha=\beta$&$\alpha=\beta$\\\hline
    $DD'$ is diagonalizable &yes&yes&no&yes
  \end{tabular}
 \end{center}
 \end{table}
\end{rem}

\subsection{Definition of symmetric triples $(\fg_{\rm nil}, \sigma, g_{\pm})$}


In this subsection, we define a symmetric triples $(\fg_{\rm nil},\sigma, g_\pm)$, which is in the list of the classification (Fact~\ref{Fact:classification}).

\begin{defi}[$\fg_{\text{nil}}, G_{\text{nil}}/H$]\label{Def:gnil}
We define a nilpotent Lie algebra\\ $\fg_{\rm nil}\coloneqq\xspan{\R}{A_1,A_2,B,C_1,C_2}$ as follows:
 \begin{align*}
  &[A_1,A_2]=B,\quad
  [B,A_1]=C_1, \quad
  [B,A_2]=C_2,\\
  &\text {the other brackets are trivial.}
 \end{align*}
Put $\fh\coloneqq\R{B}$ and $\fq\coloneqq\xspan{\R}{A_1, A_2, C_1, C_2}$, then we have $\fg_\text{nil}=\fq\oplus\fh$, $[\fq,\fh]\subset \fq$ and $[\fq,\fq]=\fh$. We define a $\fh$-invariant inner product $g$ on $\fq$ as follows:
 \[
 g_\pm\coloneqq \begin{pmatrix}
     &\pm J\\\\
     \pm J&
    \end{pmatrix},
 \]
 where $J:=\begin{pmatrix}
	    &-1\\ 1&
	   \end{pmatrix}$.
By Note~\ref{Note:kousei}, the triples $(\fg_{\text{nil}},\sigma, g_{\pm})$ are indecomposable symmetric triples with signature $(2,2)$. 
We denote by $G_{\rm nil}$ the 1-connected nilpotent Lie group with the Lie algebra $\fg_{\rm nil}$ and by $H$ the analytic subgroup of $G_{\rm nil}$ with respect to $\fh$. The Lie group $G_{\rm nil}$ is the transvection group of $G_{\rm nil}/H$ (Fact~\ref{Fact:transvection}).
\end{defi}




\section{Criterions of the existence of compact Clifford--Klein forms for spaces $G_{D,D'}/H$}\label{Section:Criterions}

To prove the main theorem, we prepare some criterions for the existence of compact Clifford--Klein forms of the symmetric space $G_{D,D'}/H$ (Propositions~\ref{Prop:mainprop} and \ref{Prop:csmainprop}). In this section, we use Notation~\ref{notation:4} and the following:
\begin{notation}\label{notation:second} 
 \begin{itemize}
  \item $\fh:=\xspan{\R}{Y_1,\cdots, Y_n}\subset \fh_n$,
  \item $D, D'\in\sym^{(\text{reg})}(n,\R)$,
  \item $W\coloneqq\begin{pmatrix}&D'\\D&\end{pmatrix}\in M(2n,\R),\quad W_t:=\exp tW=\begin{pmatrix}A_t&B_t\\ *&*\end{pmatrix}\in GL(2n,\R)$,
  \item $\pr_1:G_{D,D'}=\R\ltimes H_n\to \R \text{ the first projection}$,
  \item $\pr_2:G_{D,D'}=\R\ltimes H_n\to H_n \text{ the second projection}$.
 \end{itemize}
\end{notation}

In this section, we identify $\xspan{\R}{X_1,\cdots, X_n, Y_1,\cdots, Y_n}$ as $\R^{2n}$ and $\fh_n$ as $\R^{2n+1}$. Especially, we think $M(2n,\R)\simeq \operatorname{End}({\xspan{\R}{X_1,\cdots,X_n, Y_1\cdots, Y_n}})$ and $M(2n+1,\R)\simeq \operatorname{End}({\xspan{\R}{X_1,\cdots,X_n, Y_1\cdots, Y_n, Z}})$.

\begin{note}\label{Note:easycalc}
 By a direct calculation, we have:
 \[
  \Ad_g|_{\fh_n} = \begin{pmatrix} W_{\pr_1(g)}&\\ * &1\end{pmatrix},\quad (g\in G_{D,D'}).
 \]
\end{note}

\subsection{Subgroups $L_C$ and $L_{C,w}$}

In Subsection~\ref{Subsection:Criterions}, we see criterions for the existence of compact Clifford--Klein forms. Before that, we show two basic Propositions \ref{Prop:properequiv} and \ref{Prop:structurethm}. In Proposition \ref{Prop:structurethm}, we classify constructors of $G_{D,D'}/H$. To do this, we introduce some subgroups of $G_{D,D'}$ and show their basic properties.

\begin{defi}
 For $C\in M(n,\R)$ and $w\in\fh$, we denote by $\fl'_{C}\subset \R^{2n}$ the image of the linear transform defined by
 $\begin{pmatrix}
   I_n&0\\
   C&0
  \end{pmatrix}$.
Then we put:
 \begin{align*}
 \fl_{C}&:=\fl_{C}'\oplus\fz\underset{\rm subalgebra}\subset \fh_{n},\\
 \fl_{C,w}&:=\R(W+w)\oplus\fl_{C}\underset{\rm subspace}\subset \fg_{D,D'}.
 \end{align*}
We denote by $L_C$ the analytic subgroup in $G_{D,D'}$ with respect to $\fl_C$. If $\fl_{C,w}$ is a subalgebra of $\fg_{D,D'}$, we denote by $L_{C,w}$ its analytic subgroup in $G_{D,D'}$. 
\end{defi}

We see the criterion of $\fl_{C,w}$ to be a subalgebra of $\fg_{D,D'}$.

\begin{prop}\label{Prop:subalgebracri}
 For $C\in M(n,\R)$ and $w\in\fh$, the following conditions are equivalent:
 \begin{eqenum}
  \item the subspace $\fl_{C,w}\subset \fg_{D,D'}$ is a subalgebra,
  \item $[W+w,\fl_C]\subset \fl_C$,
  \item the subspace $\fl'_C$ is $W$-invariant,
  \item the subspace $\fl'_C$ is $W_t$-invariant\quad  ($\forall t\in\R$),
  \item the subalgebra $\fl_C$ is $\Ad_{G_{D,D'}}$-invariant,
  \item $CD'C=D$.
 \end{eqenum}
\end{prop}

\begin{rem}
 By this proposition, the conditions (a) and (b) do not depend on $w\in\fh$.
\end{rem}

\begin{proof}
 Since $\fl_C$ is a subalgebra of $\fh_n$, the condition (a) is equivalent to the condition (b).
 By $[w,\fl_C]\subset\fz$, we have the equivalence (b)$\siff$(c) by a direct calculation. The equivalence (c)$\siff$(d) is easy and we have the implication (d)$\siff$(e) by Note \ref{Note:easycalc}. By Note~\ref{Note:linearlem} below, we show the equivalence (c)$\siff$(f) as follows.
 \begin{align*}
  \text{(c)} \iff (C, -I_n)\begin{pmatrix}
			&D'\\D&
		       \end{pmatrix}
  \begin{pmatrix}
   I_n\\ C
  \end{pmatrix}=0 \iff CD'C-D=0 \iff \text{(f)}.
 \end{align*} 
\end{proof}

\begin{note}\label{Note:linearlem}
 For $C\in M(n,\R)$ and $A\in M(2n,\R)$, the subspace
 $\im \begin{pmatrix}
       I_n\\ C
      \end{pmatrix}\subset\R^{2n}$ is $A$-invariant if and only if
 $(C,\ -I_n)A
 \begin{pmatrix}
  I_n\\ C
 \end{pmatrix}=O$.
\end{note}

We give fundamental properties of the subspaces $\fl_{C}$ and $\fl_{C,w}$.

\begin{note}\label{Note:structurelcw}
  There is a Lie algebra isomorphism $\fl_{C} \simeq\fh_{k}\oplus \R^{n-2k}$, where $k\coloneqq(\operatorname{rank}(C-C^T))/2$. 
\end{note}

\begin{note}\label{Note:lcwdecomposition}
 For $C\in M(n,\R)$ and $w\in\fh$, we have decompositions $\R^{2n}=\fl_C'\oplus\fh$, $\fh_n=\fl_C\oplus\fh$ and $\fg_{D,D'}=\fl_{C,w}\oplus\fh$ as linear spaces.
\end{note}

The ``converse of Note~\ref{Note:lcwdecomposition}'' also holds. In fact, we have:

\begin{prop}\label{Prop:decomposition}
 \begin{enumerate}
  \item For any subspace $\fl\subset \R^{2n}$ satisfying $\R^{2n}=\fl\oplus \fh$, there exists $C\in M(n,\R)$ satisfying $\fl=\fl'_C$.
  \item For any subalgebra $\fl\subset\fh_{n}$ satisfying $\fh_{n}=\fl\oplus\fh$, there exists $C\in M(n,\R)$ satisfying $\fl=\fl_C$.
  \item  For any subalgebra $\fl\subset\fg_{D,D'}$ satisfying $\fg_{D,D'}=\fl\oplus\fh$, there exist $C\in M(n,\R)$ and $w\in \fh$ satisfying $CD'C=D$ and $\fl=\fl_{C,w}$.
 \end{enumerate}
\end{prop}

To prove this proposition, we use the following fact and note.

\begin{fact}[{\cite[Lemma~4.3]{OKCK}}]\label{Fact:centerin}
If a subalgebra $\fl\subset \fh_n$ satisfies $\fh_n=\fl\oplus \fh$ as a linear space, we have $\fz\subset \fl$.
\end{fact}

\begin{note}\label{Note:subsetnote}
 Let $V=U\oplus W$ be a linear space decomposition. For a subspace $V_1\subset V$ satisfying $W\subset V_1$, we have $V_1=(V_1\cap U)\oplus W$.
\end{note}

\begin{proof}[{\rm \bf Proof of Proposition~\ref{Prop:decomposition}}]
 \begin{enumerate}
  \item Let $\fl\subset \R^{2n}$ be a subspace satisfying $\R^{2n}=\fl\oplus\fh$. Then there exists $C\in M(n,\R)$ such that:
	\[
	 \fl=\im
	\begin{pmatrix} I&0\\ C&0\end{pmatrix}=\fl'_C.
	\]
  \item Let $\fl\subset \fh_n$ be a subalgebra satisfying $\fh_n=\fl\oplus\fh$. By using Note~\ref{Note:subsetnote} for the decomposition $\fh_n = \fl\oplus\fh$ and the subspace $\R^{2n}\subset \fh_n$, we have $\R^{2n}=(\R^{2n}\cap \fl)\oplus\fh$. By the statement (1), there exists $C\in M(n,\R)$ satisfying $\R^{2n}\cap \fl=\fl'_C$. By Fact~\ref{Fact:centerin}, we get $\fz\subset \fl$. Then by using Note~\ref{Note:subsetnote} again for the decomposition $\fh_n=\R^{2n}\oplus\fz$ and the subspace $\fl\subset\fh_n$, we have $\fl=(\fl\cap \R^{2n})\oplus \fz=\fl'_C\oplus\fz=\fl_C$.
  \item Let $\fl\subset \fg_{D,D'}$ be a subalgebra satisfying $\fg_{D,D'}=\fl\oplus\fh$. By using Note~\ref{Note:subsetnote} for the decomposition $\fg_{D,D'}=\fl\oplus\fh$ and the subspace $\fh_n\subset \fg_{D,D'}$, we get $\fh_n=(\fh_n\cap\fl)\oplus\fh$. By the statement (2), we have $\fh_n\cap \fl=\fl_C$ for some $C\in M(n,\R)$. Then we have $\fl=\R(W+w)\oplus \fl_C=\fl_{C,w}$ for some $w\in \fh$. Since $\fl_{C,w}$ is subalgebra, we obtain $CD'C=D$ (Proposition~\ref{Prop:subalgebracri}).
 \end{enumerate}
\end{proof}

\begin{prop}\label{Prop:properequiv}
For $C\in M(n,\R)$, the following conditions are equivalent:
 \begin{eqenum}
	\item the pair $(L_C, H)$ satisfies the property (CI) in $G_{D,D'}$,
        \item $\Ad_{G_{D,D'}}\fl_C\cap\fh=\{0\}$,
        \item $W_t \fl'_C\cap \fh=\{0\}$ \quad ($\forall t\in\R$),
	\item the matrix $A_t + B_t C$ is invertible \quad ($\forall t\in \R$).
 \end{eqenum}
\end{prop}
\begin{proof}
The equivalence (a)$\siff$(b) comes from Note~\ref{Note:nilpci2}.
We have $\Ad_{g}\fl_C=W_{\pr_1(g)}\fl'_C\oplus\fz$ for $g\in G_{D,D'}$ by Note~\ref{Note:easycalc}, so the equivalence (b)$\siff$(c) holds.\par
 Then we show the equivalence (c)$\siff$(d). Take any $t\in\R$. Since $W_t\fl_C'$ is the image of the linear map $\begin{pmatrix}A_t&B_t\\ *&*\end{pmatrix}\begin{pmatrix}I_n&0\\ C&0\end{pmatrix}$, we have:
 \[
 W_t\fl'_C\cap \fh=\{0\}\siff
  \det(A_t+B_tC)\neq0.
 \]
\end{proof}

Then we get:

\begin{lem}\label{Lem:LCCI}
 Suppose $C\in M(n,\R)$ satisfies $CD'C=D$, then we have $L_{C,w}\pitchfork H$ in $G_{D,D'}$. 
\end{lem}

\begin{proof}
By Note~\ref{Note:nilpci2} and Propositions~\ref{Prop:subalgebracri} and \ref{Prop:properequiv}, the pair $(L_C,H)$ has the property (CI) in $H_n$. By Fact~\ref{Fact:nasrin}, we have $L_C\pitchfork H$ in $H_n$. By the condition $\A_g L_C=L_C$ for any $g\in G_{D,D'}$, we have $\A_{S}L_C\pitchfork H$ in $H_n$ for any compact set $S\subset G_{D,D'}$. By using Lemma~\ref{Lem:properlem} with $(G,N,L)=(G_{D,D'},H_n,L_1L_C)$, we obtain $L_{C,w}\pitchfork H$ in $G$, where $L_1$ is the analytic subgroup of $G_{D,D'}$ with respect to $\R(W+w)$ and the condition $L_1\pitchfork H_n$ comes from Lemma~\ref{Lem:solvproper}.
\end{proof}

Finally, we classify the constructors of $G_{D,D'}/H$, namely, we have:

\begin{prop}\label{Prop:structurethm}
 For a connected subgroup $L\subset G_{D,D'}$, the following conditions are equivalent.
 \begin{eqenum}
  \item The subgroup $L$ is a constructor of $G_{D,D'}/H$.
  \item There exist $C\in M(n,\R)$ and $w\in\fh$ satisfying $CD'C=D$ and $L=L_{C,w}$.
 \end{eqenum}
\end{prop}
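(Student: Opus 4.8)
The plan is to prove Proposition~\ref{Prop:structurethm} by combining the linear‑algebra characterization of complementary subalgebras (Proposition~\ref{Prop:subalgcondition}, specialized via Proposition~\ref{Prop:decomposition}) with the cocompactness criterion for proper actions on $1$‑connected solvable homogeneous spaces (Proposition~\ref{Prop:solvablecri}). Recall that a constructor is by definition a connected closed subgroup acting properly and cocompactly on $G_{D,D'}/H$; since $G_{D,D'}$ is $1$‑connected solvable, Proposition~\ref{Prop:solvablecri} tells us that, under the properness hypothesis, cocompactness is equivalent to the linear‑space decomposition $\fg_{D,D'}=\fh\oplus\fl$.

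First I would prove (b)$\Rightarrow$(a). Given $C\in GL(n,\R)$ with $CD'C=D$ and $w\in\fh$, one checks directly that $\fl_{C,w}=\R(W+w)\oplus\fl_C$ is a subalgebra of $\fg_{D,D'}$: the bracket computations reduce to the relation $CD'C=D$ together with $[\fl_C,\fl_C]\subset\fz\subset\fl_C$ and the fact that $w$ lies in the abelian $\fh$, so $\ad(W+w)$ preserves $\fl_C'\oplus\fz$ exactly when the $\xspan{\R}{X_i}$–component maps back in, which is the condition $CD'=D C^{-1}$, i.e. $CD'C=D$. (This is essentially condition (ii) of Proposition~\ref{Prop:subalgcondition} for the commutative $\fh$, so I may also cite that proposition rather than redo the computation.) Then $\dim\fl_{C,w}=1+n=\dim\fg_{D,D'}-n=\dim\fg_{D,D'}-\dim\fh$ and $\fl_{C,w}\cap\fh=0$, so $\fg_{D,D'}=\fh\oplus\fl_{C,w}$. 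It remains to verify that the corresponding connected subgroup $L_{C,w}$ acts \emph{properly} on $G_{D,D'}/H$; by Remark~\ref{Rem:basicproperty2} this is $H\pitchfork L_{C,w}$ in $G_{D,D'}$, and by Corollary~\ref{Cor:properisfree} properness will also give freeness automatically, but properness itself I would get from Proposition~\ref{Prop:propciequiv}: write $G_{D,D'}=(\R W)\ltimes H_n$ with $N=H_n$ the $1$‑connected $2$‑step nilpotent radical, take $L_0=L_C\subset N$, $L_1=\R(W+w)\subset\mathcal N_G(L_0)$ (it normalizes $L_C$ precisely because $CD'C=D$), note $L_1\pitchfork N$ trivially since $L_1\cap N=\{e\}$, and reduce $L\pitchfork H$ to the (CI) property of $(L_C,H)$ in $G$, which by Remark~\ref{Rem:nilpci} (applied in $H_n$ after twisting by $W_t$) is the statement that $\fl_C\cap(\id+\ad Y)\Ad_{W_t}\fh=\{0\}$ for all $Y,t$; this last condition follows from $CD'C=D$ by a direct computation with the explicit form of $W_t$, essentially because the matrix $A_t+B_tC$ stays invertible (cf. condition (i) of Proposition~\ref{Prop:mainprop}).

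Next I would prove (a)$\Rightarrow$(b). Let $L$ be a constructor. Proper $\Rightarrow$ $\fl\cap\fh=0$, and cocompact together with properness gives, by Proposition~\ref{Prop:solvablecri}, $\fg_{D,D'}=\fh\oplus\fl$. By Proposition~\ref{Prop:decomposition}(2) there exist $C\in M(n,\R)$ and $w\in\fh$ with $\fl=\fl_{C,w}$. It remains to show $C$ is invertible and $CD'C=D$. Invertibility of $C$: if $C$ were singular, there would be a nonzero $v$ with $Cv=0$, so $\sum v_iX_i\in\fl_{C,w}\cap\xspan{\R}{X_i}$; feeding this through properness of the $L$‑action (or through the (CI)/invertibility condition on $A_t+B_tC$ at $t=0$, which forces $C$ invertible already, since $A_0=I$, $B_0=0$ — wait, that only gives $\det I=1$) one instead argues as in the proof of Proposition~\ref{Prop:decomposition}/Remark~\ref{Rem:lc}: a singular $C$ makes $\fl_{C,w}$ fail to be complementary dimension‑wise only if... — more cleanly, since $\fl=\fl_{C,w}$ must be a subalgebra and $\fg=\fh\oplus\fl$ with $\dim\fl=n+1$, the subalgebra condition (Proposition~\ref{Prop:subalgcondition}(2), condition (ii) for commutative $\fh$) applied to the graph map $\phi$ determined by $C$ and $w$ yields, on the pair $X=W$, $Y=X_i$, exactly the relation $CD'C=D$ (equivalently $CD'=DC^{-1}$ once one knows $C$ is invertible), and separately tracing the requirement that $\ad(W+w)$ preserve $\fl_C'$ forces $CD'$ to be in the image of multiplication by $D$, i.e. $C$ invertible because $D$ is. I would present this as: the subalgebra relation gives $DC^{T}{}^{-1}\cdots$ — concretely, $\ad W$ sends $X_i+\sum_j C_{ji}Y_j$ to a combination that must lie in $\fl_C'\oplus\fz$, and reading off the $\xspan{\R}{X_i}$ and $\xspan{\R}{Y_i}$ parts yields $C(D'C)=D$ with $C$ forced invertible since $D$ is invertible and $D'C$ would otherwise have a kernel incompatible with the equation. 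The main obstacle is exactly this bookkeeping step — extracting $C\in GL(n,\R)$ and the precise relation $CD'C=D$ from "$\fl_{C,w}$ is a subalgebra complementary to $\fh$" — because it requires care with the twisted action of $W$ on the Heisenberg part and with the center correction $w$; everything else is either a dimension count or an invocation of Propositions~\ref{Prop:solvablecri}, \ref{Prop:subalgcondition}, \ref{Prop:decomposition}, \ref{Prop:propciequiv} already established.
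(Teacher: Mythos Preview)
Your outline matches the paper's: reduce ``constructor'' to $\fg_{D,D'}=\fh\oplus\fl$ via Proposition~\ref{Prop:solvablecri}, identify $\fl=\fl_{C,w}$ via Proposition~\ref{Prop:decomposition}, extract $CD'C=D$ from the subalgebra condition (whence $C$ is invertible since $D$ is, exactly as you eventually argue), and handle properness through Proposition~\ref{Prop:propciequiv}. The paper isolates the equivalence ``$\fl_{C,w}$ is a subalgebra $\iff CD'C=D \iff \fl_C$ is $\Ad_{G_{D,D'}}$-invariant'' as a standalone Lemma~\ref{Lem:subalgebracri}; the last reformulation yields a shorter properness argument than yours: once $\fl_C$ is $\Ad_G$-invariant, the (CI) condition for $(L_C,H)$ in $G_{D,D'}$ becomes $\Ad_G\fl_C\cap\fh=\fl_C\cap\fh=\{0\}$, which is immediate, so your detour through the invertibility of $A_t+B_tC$ (true, but a Lemma~\ref{Lem:properequiv}-type computation) is unnecessary here.

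Two small slips to fix. First, $\dim\fl_{C,w}=n+2$, not $1+n$: you dropped the center $\fz\subset\fl_C$. Second, ``$L_1\pitchfork N$ trivially since $L_1\cap N=\{e\}$'' is not a valid inference in general; what you need (and what the paper spells out in the proof of Lemma~\ref{Lem:gddproper}) is that $\pr_1:G_{D,D'}\to\R$ has kernel $N$ and restricts to a proper map on $L_1$, so $L_1\cap SNS$ sits over the compact set $\pr_1(S)^2$ and is therefore compact.
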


\begin{proof}
First, we show the implication (a)$\imply$(b). We denote by $\fl$ the Lie algebra of $L$. By Proposition~\ref{Prop:solvablecri}, we have $\fg_{D,D'}=\fl\oplus \fh$, and so the condition (b) follows from Proposition~$\ref{Prop:decomposition}$.\par
 Next, we show the implication (b)$\imply$(a). Take $C\in M(n,\R)$ and $w\in\fh$ satisfying $CD'C=D$. We are enough to show the properness and cocompactness of the $L_{C,w}$-action. By Lemma~\ref{Lem:LCCI}, the $L_{C,w}$-action on $G_{D,D'}/H$ is proper. By Note~\ref{Note:lcwdecomposition}, we have $\fg_{D,D'}=\fl_{C,w}\oplus\fh$, which implies the cocompactness by Proposition~\ref{Prop:solvablecri}.
\end{proof}

\subsection{Uniform lattices of $L_C$ and $L_{C,w}$}

In this subsection, we discuss necessary conditions for the existence of a uniform lattice in $L_C$ and $L_{C,w}$, namely, we show the following two propositions.

\begin{prop}\label{Prop:lclattice}
 Assume there exists $\ell\in G_{D,D'}-H_n$ such that the subgroup $L_C$ has an $\A_{\ell}$-invariant uniform lattice. Then the subspace $\fl'_C$ is $W_{t_0}$-invariant and we have $\det(W_{t_0}|_{\fl'_C})=\pm1$, where $t_0:=\pr_1(\ell)\in\R^\times$. 
\end{prop}

\begin{prop}\label{Prop:tracecondition}
 Suppose $C\in M(n,\R)$ and $w\in \fh$ satisfy the condition in Proposition~\ref{Prop:subalgebracri}. If the subgroup $L_{C,w}$ has a uniform lattice, then the condition $\operatorname{tr}{D'C}=0$ holds.
\end{prop}

To prove these propositions, we define a solvable Lie group.

\begin{defi}\label{def:SM}
 For $M\in M(m,\R)$, we consider the $\R$-action on $\R^m$, $\phi:\R\to GL(m,\R),\ t\mapsto \exp tM$, and denote by $S_M$ the semidirect product $\R\ltimes_{\phi}\R^m$. We denote by $\pr_\R:S_M\to \R$ the first projection. We regard $\R$ as a subgroup of $S_M$ by the injection $\R\to S_M,\ t\mapsto (t,0)$.
\end{defi}

We see some basic properties of $S_M$ (Lemma~\ref{Lem:centerlizer}, Note~\ref{Note:LCwform} and Proposition~\ref{Prop:SMlatticecondition}). 

\begin{lem}\label{Lem:centerlizer}
 Let $M\in M(m,\R)$ and $t_0\in\R$. If $\exp t_0 M$ does not have an eigenvalue $1$, then we have $\mathcal{Z}_{S_M}(t_0)=\R$.
\end{lem}

\begin{proof}
 For an element $(t_1,v_1)\in S_M$, we have:
 \[
  (t_1,v_1)\in\mathcal{Z}_{S_M}(t_0) \siff \A_{(t_0,0)}(t_1,v_1)=(t_1,v_1)\siff (\exp t_0M) v_1 =v_1\siff v_1=0.
 \]
\end{proof}

\begin{note}\label{Note:LCwform}
 Some Lie groups in this paper are isomorphic to $S_M$. Let $C\in M(n,\R)$ and $w\in\fh$ satisfy the condition of Proposition~\ref{Prop:subalgebracri}.
 \begin{itemize}
  \item  If $C$ is symmetric, $L_{C,w}\simeq S_M$, where $M:= \begin{pmatrix}
  D'C&0\\ -w^T&0
 \end{pmatrix}$,
  \item $L_{C,w}/\mathcal{Z}_{H_n}\simeq S_{D'C}$,
  \item $G_{D,D'}/\mathcal{Z}_{H_n}\simeq S_W$.
 \end{itemize}
\end{note}

\begin{prop}\label{Prop:SMlatticecondition}
For $M\in M(m,\R)$, we have $\tr M=0$ if the group $S_M$ has a uniform lattice.
\end{prop}

To prove this proposition, we prepare some facts and a proposition.

\begin{fact}[{\cite[Theorem~3.3]{R}}]\label{Fact:nilrad}
 Let $G$ be a connected solvable Lie group and $N$ its maximum connected (closed) normal nilpotent subgroup. Let $H$ be a cocompact closed subgroup of $G$. Assume that $H \cap N$ contains no non-trivial connected (closed) Lie subgroups which are normal in $G$. Then $N/(H\cap N)$ is compact.
\end{fact}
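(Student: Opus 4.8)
The plan is to recast the statement as a single closedness assertion about $NH$, reduce it to the abelian quotient $G/N$, and then prove it by induction down the nilpotent structure of $N$, so that the two hypotheses are used only at the very bottom of the induction.

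First I would reduce to a closed-orbit problem. The quotient $N/(H\cap N)$ is canonically the $N$-orbit of the base point $eH$ in $G/H$, via the continuous injection $N/(H\cap N)\to NH/H\subset G/H$. Since $G/H$ is compact and Hausdorff, the image $NH/H$ is closed if and only if it is compact; a closed orbit of a smooth Lie group action is an embedded submanifold, hence in that case diffeomorphic to $N/(H\cap N)$, while conversely if $N/(H\cap N)$ is compact its image is compact and so closed. As $NH$ is the full preimage of $NH/H$ under the quotient map $G\to G/H$, the assertion ``$N/(H\cap N)$ is compact'' is \emph{equivalent} to ``$NH$ is closed in $G$''. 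So it suffices to prove that $NH$ is closed.

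Second I would pass to the abelian quotient $A:=G/N$. Because $[\fg,\fg]$ is a nilpotent ideal (Lie's theorem), it lies in the nilradical, so $[G,G]\subset N$ and $A$ is a connected abelian Lie group. Writing $\pi\colon G\to A$ for the projection, we have $\pi^{-1}(\pi(H))=NH$, and since $\pi$ is an open quotient map, $NH$ is closed in $G$ if and only if $\pi(H)$ is closed in $A$. Thus the whole problem becomes: \emph{$\pi(H)$ is a closed subgroup of the connected abelian Lie group $A$.} I would prove this by induction on $\dim N$. If $\dim N=0$ then $\pi(H)=A$ is closed. For the inductive step, choose a minimal nonzero $\ad\fg$-invariant subspace $\mathfrak m\subset\fn$, with corresponding connected subgroup $M\subset N$ normal in $G$. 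Minimality forces the strictly smaller invariant subspace $[\mathfrak m,\fn]$ to vanish, so $\mathfrak m\subset\mathfrak z(\fn)$; hence $M\subset Z(N)$ is abelian, $N$ acts trivially on it, and $G$ acts on $M\cong\R^d$ through a representation of $A$. The no-normal-subgroup hypothesis forces $M\not\subset H$. The key claim is that $MH$ is closed in $G$; granting it, $\bar H:=MH/M$ is a closed cocompact subgroup of $\bar G:=G/M$, its intersection with the nilpotent normal subgroup $\bar N:=N/M$ inherits the no-normal-subgroup hypothesis, and $\bar G/\bar N$ is again $A$. Applying the inductive hypothesis to $(\bar G,\bar N,\bar H)$ shows that the image of $\bar H$ in $A$, which is exactly $\pi(H)$, is closed, completing the induction.

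The hard part will be the key claim that $MH$ is closed for the minimal abelian normal subgroup $M$. By the same orbit argument as in the first step, this is equivalent to $M/(M\cap H)$ being compact, i.e.\ to $M\cap H$ being cocompact in $M\cong\R^d$. This is precisely where both hypotheses enter: the global cocompactness of $H$ in $G$ must be converted, through the contracting/expanding dynamics of the $A$-action on $M$ (a suspension- or Anosov-type mechanism), into cocompactness of $M\cap H$ in $M$, while $M\not\subset H$ excludes the degenerate possibility $M\cap H=M$; absent the no-normal-subgroup hypothesis, a ``dense winding'' of $\pi(H)$ could make $MH$ non-closed. Carrying out this step rigorously, together with the non-simply-connected subtleties (toral factors in $A$ and in $Z(N)$) and the verification that both hypotheses descend to $G/M$, is the technical heart of the argument, and is the content supplied by \cite{R}.
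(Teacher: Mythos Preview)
The paper does not prove this statement; it is quoted as \cite[Theorem~3.3]{R} and used as a black box in the proofs of the subsequent lattice criteria. There is therefore no in-paper argument to compare your proposal against.

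Your outline is a reasonable roadmap, and is close in spirit to how the proof in \cite{R} is organised: reduce ``$N/(H\cap N)$ compact'' to ``$NH$ closed in $G$'', translate this to ``$\pi(H)$ closed in the abelianisation $A=G/N$'', and then run an induction on $\dim N$ via a minimal connected normal subgroup $M\subset Z(N)$. Two comments. First, the base case is misstated: when $\dim N=0$ one has $\pi=\id$, so $\pi(H)=H$ is closed by hypothesis, not $\pi(H)=A$. Second, and more substantively, the inheritance of the no-normal-subgroup hypothesis under $G\to G/M$ is not automatic: if $\bar K\subset\bar H\cap\bar N$ is connected and normal in $\bar G$, its preimage $K$ in $G$ satisfies $M\subset K\subset M(H\cap N)$ but need not lie in $H$, so one cannot invoke the original hypothesis directly. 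The standard remedy is to further quotient $\bar G$ by the maximal connected normal subgroup contained in $\bar H$; since any such subgroup lies in $\bar N$, the target $A$ is unchanged and the induction still terminates.

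You explicitly flag both this inheritance issue and the genuinely hard step (compactness of $M/(M\cap H)$ from cocompactness of $H$ in $G$) as ``the content supplied by \cite{R}''. That is honest, but it means what you have written is a plan rather than a self-contained proof --- which, in the end, matches the paper's own stance of citing the result without argument.
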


\begin{fact}[\cite{repofnilp}]\label{Fact:nilpcommutativity}
 Let $G$ be a 1-connected nilpotent Lie group and $\Gamma\subset G$ a uniform lattice. Then $G$ is commutative if and only if so is $\Gamma$.
\end{fact}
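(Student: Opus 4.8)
The plan is as follows. The ``if'' direction is immediate: if $G$ is commutative then, being a $1$-connected nilpotent Lie group, $G\cong\R^{n}$ with $n=\dim G$, so every subgroup, in particular $\Gamma$, is commutative. Hence I would concentrate on the converse and show that if the uniform lattice $\Gamma$ is commutative then $[G,G]=\{e\}$.

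The first step is to bring in Mal'cev's structure theory of lattices in $1$-connected nilpotent Lie groups. Since $\exp\colon\fg\to G$ is a diffeomorphism and $\Gamma$ is a uniform lattice, the $\mathbb{Q}$-span of $\log\Gamma$ is a rational form of $\fg$, the lower central series subgroups $G_i$ of $G$ are ``rational'', and the abstract lower central series of $\Gamma$ embeds in $G$ as uniform lattices of the $G_i$; concretely, $[\Gamma,\Gamma]$ is a uniform lattice of $[G,G]$. This is precisely the kind of structural statement about nilpotent lattices recorded in \cite{repofnilp}, and it is the only nontrivial ingredient of the whole argument.

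Granting this, the conclusion is formal. If $\Gamma$ is commutative then $[\Gamma,\Gamma]=\{e\}$, so the trivial group is cocompact in the connected group $[G,G]$; hence $[G,G]$ is compact. But $[G,G]$ is a connected (closed, since $G$ is $1$-connected nilpotent) subgroup of the $1$-connected solvable Lie group $G$, so it is itself a $1$-connected solvable Lie group, and by Fact~\ref{Fact:keyforcifree} its only compact subgroup, namely $[G,G]$ itself, is trivial. Therefore $[G,G]=\{e\}$ and $G$ is commutative. An essentially equivalent shortcut avoids the commutator subgroup altogether: a uniform lattice in a $1$-connected solvable Lie group is finitely generated and torsion-free (a finite subgroup would be a compact subgroup, hence trivial by Fact~\ref{Fact:keyforcifree}), so a commutative $\Gamma$ is isomorphic to $\Z^{r}$ for some $r$; since $\Z^{r}$ is a uniform lattice of $\R^{r}$ and, by Mal'cev's rigidity theorem, the $1$-connected nilpotent Lie group admitting a prescribed group as a uniform lattice is unique up to isomorphism, one gets $G\cong\R^{r}$, which is commutative.

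The main obstacle is thus the Mal'cev-type fact that $[\Gamma,\Gamma]$ is cocompact in $[G,G]$ (equivalently, that the characteristic connected subgroups of $G$ are rational with respect to the $\mathbb{Q}$-structure determined by $\Gamma$); everything downstream of it is soft, using only that a connected subgroup of a $1$-connected solvable Lie group has no nontrivial compact subgroups. As this is exactly the standard structure theory of nilpotent lattices, I would simply quote \cite{repofnilp} for it rather than reprove it.
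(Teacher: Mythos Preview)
The paper does not prove this statement at all: it is recorded as a Fact with a bare citation to \cite{repofnilp}, and is then used as a black box in the proofs of Proposition~\ref{Prop:tracecondition}, Proposition~\ref{Prop:mainprop}, and Lemma~\ref{Lem:pr1discrete}. There is therefore no ``paper's own proof'' to compare against.

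Your sketch is a correct way to establish the fact. Both routes you outline are standard and sound: the first uses that $[\Gamma,\Gamma]$ is cocompact in $[G,G]$ (Mal'cev), so commutativity of $\Gamma$ forces $[G,G]$ to be compact, hence trivial by Fact~\ref{Fact:keyforcifree}; the second observes that a commutative, finitely generated, torsion-free $\Gamma$ is $\Z^r$, and Mal'cev rigidity then pins $G$ down as $\R^r$. Either way the only substantive input is the Mal'cev structure theory you cite, which is exactly what the paper is outsourcing to \cite{repofnilp}. One small quibble: in the first route you assert that a closed connected subgroup of a $1$-connected solvable Lie group is itself $1$-connected; this is true but not needed, since Fact~\ref{Fact:keyforcifree} already tells you that any compact subgroup of $G$ is trivial, and $[G,G]$ is such a subgroup once you know it is compact.
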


\begin{prop}\label{Prop:invlattice}
 For $M\in M(m,\R)$, we consider the following conditions.
 \begin{enumerate}
  \item There exists a cocompact discrete subgroup $\Gamma\subset \R^m$ satisfying $M\Gamma\subset \Gamma$.
  \item There exists a cocompact discrete subgroup $\Gamma\subset \R^m$ satisfying $M\Gamma= \Gamma$.
  \item All the coefficients of the characteristic polynomial of $M$ are integers.
  \item All the coefficients of the characteristic polynomial  of $M$ are integers and $\det M=\pm1$.
 \end{enumerate}
Then the implications (1)$\imply$(3) and (2)$\imply$(4) hold. Moreover, the equivalences (1)$\siff$(3) and (2)$\siff$(4) also hold if the eigenvalues of $M$ are distinct.
\end{prop}

\begin{proof}
 First, we prove the implication (1)$\imply$(3). Since the uniform lattice of $\R^n$ is isomorphic to $\Z^n$, $M$ is similar to an element of $M(n,\Z)$. Therefore, all the coefficients of characteristic polynomial are integers. \par
 Next, we show the implication (2)$\imply$(4). Since $M$ is invertible and both $\det M$ and $\det M\inv$ are integers, then we have $\det M=\pm1$.\par
 Finally, we prove the inverse implications (3)$\imply$(1) and (4)$\imply$(2). Assume the eigenvalues of $M$ are distinct, and all the coefficients of the characteristic polynomial of $M$ are integers. Since the eigenvalues of $M$ are distinct, there exists $v\in\R^n$ such that $(M^i v)_{i=0,1,\cdots,n-1}$ is a basis of $\R^n$. Then $\Gamma\coloneqq \xspanset{\Z}{M^i v}{i=0,1,\cdots,n-1}$ satisfies the condition (1). Actually, by Cayley--Hamilton's theorem, $M^n v$ is written as an linear combination of $(M^iv)_{i=0,1,\cdots,n-1}$ with integer coefficients, so we have $M^n v\in \Gamma$. Especially, in the case $\det M=\pm1$, since $M\inv v$ is also written as a linear combination of $(M^i v)_{i=0,1,\cdots,n-1}$, we have $M\Gamma = \Gamma$.
\end{proof}

\begin{ex}\label{ex:lattice}
Put $M:=\diag{1,-1,0}$ and let us see that the group $S_M$ admits a uniform lattice. Set $t_0:=\log(2+\sqrt{3})$. Since the characteristic polynomial of $\exp t_0 M$ is $t^3-5t^2+5t-1$ and its roots are distinct, there exists a uniform lattice $\Gamma_0\subset \R^3$ which is $\exp t_0M$-invariant by Proposition~\ref{Prop:invlattice}. The subgroup $\langle t_0\rangle\Gamma_0\subset S_M$ is a uniform lattice.
\end{ex}

\begin{proof}[{\bf Proof of Proposition~\ref{Prop:SMlatticecondition}}]
 If $M$ is nilpotent, we already have $\tr M=0$. On the other hand, if $M$ is not nilpotent, $\R^n$ is the maximum connected normal nilpotent subgroup of $S_M$. In this case, let $\Gamma$ be a uniform lattice of $S_M$. Then $\Gamma\cap \R^n$ is a uniform lattice of $\R^n$ by Fact~\ref{Fact:nilrad}. By Proposition~\ref{Prop:invlattice}, we have $\det(\exp tM)=\pm1$ for some $t\in\pr_\R(\Gamma)-\{0\}$. Then we obtain $\tr M=0$.
\end{proof}

Finally, we prove Propositions~\ref{Prop:lclattice} and \ref{Prop:tracecondition}.

\begin{proof}[{\rm \bf  Proof of Proposition~\ref{Prop:lclattice}}]
Let $\Gamma$ be an $\A_\ell$-invariant uniform lattice in $L_C$. Since $L_C$ is $\A_\ell$-invariant, the subspace $\fl'_{C}$ is $W_{t_0}$-invariant by Note~\ref{Note:easycalc}. Now we are enough to show $\det(W_{t_0}|_{\fl'_C})=\pm1$. We denote by $\log:H_n\to \fh_n$ the inverse of the exponential map $\exp:\fh_n\to H_n$ (diffeomorphism).\\
 (a) The case where $L_C$ is commutative.\par
 Since $\log(\Gamma)$ is an $\Ad_\ell$-invariant uniform lattice in $\fl_C$, we obtain $\det (\Ad_\ell|_{\fl_C})=\pm1$  by Proposition~\ref{Prop:invlattice}, and so  $\det (W_{t_0}|_{\fl'_C})=\det (\Ad_\ell|_{\fl_C})=\pm1$.\\
 (b) The case where $L_C$ is not commutative.\par
 By Fact~\ref{Fact:nilpcommutativity}, the uniform lattice $\Gamma$ is also non-commutative. Since the quotient $\mathcal{Z}_{H_n}/(\Gamma\cap \mathcal{Z}_{H_n})$ is compact, we apply Property \ref{Property:centerquotient} to the natural surjection $\pi:L_C\to L_C/\mathcal{Z}_{H_n}$, then $\widetilde{\Gamma}\coloneqq\pi(\Gamma)$ is a discrete subgroup of $L_C/\mathcal{Z}_{H_n}\simeq \R^{2n}$. Hence, the subset $\log(\widetilde{\Gamma})$ is a $W_{t_0}$-invariant uniform lattice of $\fl_C/\fz$. Then we have $\det (W_{t_0}|_{\fl'_C})=\det(W_{t_0}|_{\fl/\fz})=\pm 1$ by Proposition~\ref{Prop:invlattice}. 
\end{proof}

\begin{proof}[{\rm \bf  Proof of Proposition~\ref{Prop:tracecondition}}]
Let $\Gamma$ be a uniform lattice in $L_{C,w}$.\\
 {(a) The case where $L_C$ is commutative.}\par
Since the matrix $C$ is symmetric by Note~\ref{Note:structurelcw}, the condition $\tr D'C=0$ follows from Note~\ref{Note:LCwform} and Proposition~\ref{Prop:SMlatticecondition}.\\
 {(b) The case where $L_C$ is not commutative.}\par
 Since $L_C$ is the maximum connected normal nilpotent Lie subgroup of $L_{C,w}$, the subgroup $\Gamma\cap L_C$ is a uniform lattice of $L_C$ by Fact~\ref{Fact:nilrad}. By Fact~\ref{Fact:nilpcommutativity}, the lattice $\Gamma\cap L_C$ is not commutative, either. Hence $\mathcal{Z}_{H_n}/(\Gamma\cap \mathcal{Z}_{H_n})$ is compact. By applying Property \ref{Property:centerquotient} to the natural surjection $\pi:L_{C,w}\to L_{C,w}/\mathcal{Z}_{H_{n}}$, we find $\widetilde{\Gamma}:=\pi(\Gamma)$ is a uniform lattice of the group $\widetilde{L_{C,w}}:=\pi(L_{C,w})$. Since we have $\widetilde{L_{C,w}}\simeq S_{D'C}$ by Note~\ref{Note:LCwform}, the condition $\operatorname{tr}D'C=0$ follows from Proposition~\ref{Prop:SMlatticecondition}.
\end{proof}

\subsection{Criterions of the existence of compact Clifford--Klein forms}\label{Subsection:Criterions}

In this subsection, we give the following criterions for the existence of compact Clifford--Klein forms of $G_{D,D'}/H$. If $D$ and $D'$ are diagonal, this proposition also follows from \cite[Proposition~4.8]{OKCK}.

\begin{prop}\label{Prop:mainprop}
The following conditions are equivalent.
 \begin{eqenum}
  \item The symmetric space $G_{D,D'}/H$ admits compact Clifford--Klein forms.
  \item There exists $C\in M(n,\R)$ satisfying the following conditions.
  \begin{enumerate}
   \renewcommand{\labelenumii}{(\roman{enumii})}
   \item The matrix $A_t +B_t C$ is invertible for any $t\in\R$.
   \item The subgroup $L_C$ has an $\A_{\ell}$-invariant uniform lattice for some $\ell\in G_{D,D'}-H_n$.
  \end{enumerate}
 \end{eqenum}
 Moreover, the following condition is a necessary condition of the above condition~(ii).
 \begin{enumerate}
  \renewcommand{\labelenumi}{(\roman{enumi}')}
  \setcounter{enumi}{1}
  \item The subspace $\fl'_C$ is $W_{t_0}$-invariant and $\det (W_{t_0}|_{\fl'_C})=\pm1$ for some $t_0\in\R^\times$.
 \end{enumerate}
\end{prop}

We prove this in Subsection~\ref{Subsection:Lcondition}.
If $G_{D,D'}$ is completely solvable, we have an easier criterion.

\begin{prop}\label{Prop:csmainprop}
Assume $G_{D,D'}$ is completely solvable. The following conditions are equivalent.
 \begin{eqenum}
  \item The symmetric space $G_{D,D'}/H$ admits compact Clifford--Klein forms.
  \item There exist $C\in M (n,\R)$ and $w\in\fh$ satisfying the following conditions.
  \begin{enumerate}
   \renewcommand{\labelenumii}{(\roman{enumii})}
   \item $CD'C=D$.
   \item The subgroup $L_{C,w}$ admits a uniform lattice.
  \end{enumerate}
 \end{eqenum}
 Moreover, the following condition is a necessary condition of the above condition~(ii). 
 \begin{enumerate}
  \renewcommand{\labelenumi}{(\roman{enumi}')}
  \setcounter{enumi}{1}
  \item $\tr D'C=0$.
 \end{enumerate}
\end{prop}

\begin{proof}
The implication (b)(ii)$\imply$(ii') follows from Proposition~\ref{Prop:tracecondition}, so we are enough to show the equivalence (a)$\siff$(b), which equivalent to check that the following conditions are equivalent:
 \begin{enum}{\Alph}
  \item there exists a discrete subgroup $\Gamma \subset G_{D,D'}$ which acts on $G_{D,D'}/H$ properly discontinuously, cocompactly and freely,
  \item there exists a constructor $L$ of $G_{D,D'}/H$ and $L$ has a uniform lattice,
  \item there exists $C\in M(n,\R)$ and $w\in \fh$ such that $CD'C=D$ and $L_{C,w}$ has a uniform lattice.
 \end{enum}
 The implication (B)$\Rightarrow$(A) comes from Note~\ref{note:properisfree} (2). Since $G_{D,D'}$ is completely solvable, the implication (A)$\Rightarrow$(B) follows from Fact~\ref{Fact:contianalogy}. The equivalence (B)$\siff$(C) follows from Proposition~\ref{Prop:structurethm}.
\end{proof}

\subsection{Intermediate syndetic hulls}

In this subsection, we consider the existence problem of compact Clifford--Klein forms for solvable homogeneous spaces. A difficulty arises when $G_{D,D'}$ is not completely solvable. In fact, in this case, a discrete subgroup $\Gamma\subset G_{D,D'}$ may fail to have its syndetic hull (Remark~\ref{Rem:kobconj}). To overcome this difficulty, we introduce intermediate syndetic hulls which play a similar role to syndetic hulls.

\begin{defi}[intermediate syndetic hull]\label{Def:LSH}
For a closed subgroup $\Gamma\subset G_{D,D'}$, a closed subgroup $L'\subset G_{D,D'}$ is called an {\it intermediate syndetic hull} of $\Gamma$ if $L'$ satisfies the following conditions with $L_0:=L'\cap H_n$:
 \begin{enumerate}
  \renewcommand{\labelenumi}{(\roman{enumi})}
  \item $L_0$ is connected,
  \item there exists $\ell\in \Gamma-H_n$ satisfying $L'=\gen{\ell} L_0$, 
  \item $\Gamma$ is a cocompact subgroup of $L'$.
 \end{enumerate} 
\end{defi}

\begin{note}\label{Note:equivcond}
 The condition (iii) above is equivalent to:
\begin{enumerate}
 \setcounter{enumi}{2}
 \renewcommand{\labelenumi}{(\roman{enumi}')}
 \item $\Gamma$ is a subgroup of $L'$ and $\Gamma\cap H_n$ is cocompact in $L_0$.
\end{enumerate}
\end{note}


In this subsection, we show the next:

\begin{prop}\label{Prop:existenceofL'}
 Let $\Gamma\subset G_{D,D'}$ be a discrete subgroup acting on $G_{D,D'}/H$ cocompactly. Then $\Gamma$ has an intermediate syndetic hull.
\end{prop}

To prove this proposition, we use Lemmas~\ref{Lem:discreteandL'} and \ref{Lem:discrete}.

\begin{lem}[Criterion of the existence of an intermediate syndetic hull]\label{Lem:discreteandL'}
Let $\Gamma\subset G_{D,D'}$ be a discrete subgroup and put $\Gamma_0:=\Gamma\cap H_n$. Then the following conditions are equivalent.
 \begin{eqenum}
  \item The subgroup $\pr_1(\Gamma)\subset \R$ is non-trivial and discrete.
  \item There exists $\gamma\in \Gamma-H_n$ satisfying $\Gamma=\langle \gamma\rangle \Gamma_0$.
  \item $\Gamma$ has an intermediate syndetic hull.
 \end{eqenum}
\end{lem}

\begin{proof}
 The implication (c)$\imply$(a) is easy. We show the implication (a)$\imply$(b).
 Let $t_0\in \pr_1(\Gamma)$ be a generator of $\pr_1(\Gamma)$. We take $\gamma\in\Gamma$ satisfying $\pr_1(\gamma)=t_0$. Then the following exact sequence splits by the group homomorphism $s:\pr_1(\Gamma)\to \Gamma,\quad t_0\mapsto \gamma$. 
\[
 \xymatrix{\{e\}\ar[r]&\Gamma_0\ar[r]&\Gamma\ar[r]^{\pr_1}&\pr_1(\Gamma)\ar[r] \ar @/^4mm/[l]_{s}&\{e\}}.
\]
Then we have $\Gamma=s(\pr_1(\Gamma))\Gamma_0=\langle\gamma\rangle\Gamma_0$ and so the condition (b).\par
Then we see the implication (b)$\imply$(c). By Fact~\ref{Fact:solvablesyndetichull}, the discrete subgroup $\Gamma_0$ has the syndetic hull $L_0\subset H_n$. Then the closed subgroup $L'=\gen{\gamma}L_0$ clearly satisfies the conditions (i) and (ii) in Definition~\ref{Def:LSH} and (iii') in Note~\ref{Note:equivcond}.
\end{proof}

In the rest of this subsection, we use the following:

\begin{notationandsetting}
 Let $N$ be a 1-connected 2-step nilpotent Lie group, $\fn$ its Lie algebra.
 \begin{itemize}
  \item $\fn_{\C}:=\fn\otimes \C$
  \item $\fn(A,\lambda)\subset \fn_\C$ is the generalized eigenspace of $A\in \operatorname{End}(\fn)$ with respect to an eigenvalue $\lambda\in \C$.
  \item $\fn(A,\lambda)^\perp:=\bigoplus_{k\neq \lambda}\fn(A,k)$.
 \end{itemize}   
\end{notationandsetting}

\begin{lem}\label{Lem:discrete}
  Let $M\in \Der \fn$. we put $\phi:\R\to \Aut(\fn),\ t\mapsto \exp tM$. By identifying $\Aut(N)\simeq \Aut(\fn)$, put $G:=\R\ltimes_\phi N$ and $\pr_\R:G\to \R$ the first projection. Let $\Gamma\subset G$ be a discrete subgroup. Then $\pr_\R(\Gamma)$ is discrete if the following conditions are satisfied.
 \begin{enumerate}
  \item $\fn(M,0)=[\fn_\C,\fn_\C]$.
  \item $\Gamma_0:=\Gamma\cap N$ is commutative.
  \item $\fl_0\not \subset [\fn,\fn]$, where $\fl_0$ is the Lie algebra of the syndetic hull of $\Gamma_0$. 
 \end{enumerate}
\end{lem}

To prove this lemma, we use the following lemma.

\begin{lem}\label{Lem:discretelem}
 In the setting in Lemma~\ref{Lem:discrete} with the assumptions (1), (2) and (3), put a finite set $F$ as follows, where $\lambda_k\in\C$ are eigenvalues of $M$. 
 \[
  F:= \set{f_I(t):=\ds\sum_{k\in I} e^{t\lambda_{k}}}{I\subset\{1,2,\cdots,\dim \fn\},\ f_I \text{ is not constant}}\subset C^\infty(\R).
 \]
 Fix $\gamma=(t_0,\exp v_0)\in\Gamma$ and assume $\fn(\phi_{t_0},1)=[\fn_\C,\fn_\C]$. We think $\Ad_\gamma\in GL(\fn_\C)$. Then there exists $f\in F$ satisfying $\tr \Ad_\gamma|_{\fl_0}=f(t_0)$.
\end{lem}

\begin{proof} 
Since $\fn$ is 2-step nilpotent, we have:
 \[
  \Ad_\gamma X=\phi_{t_0}X+[v_0,\phi_{t_0}X]\quad(\forall X\in\fn_\C).
 \]
Put $V:=\fn(\phi_{t_0},1)^\perp$, then we have a decomposition $\fn_\C=V\oplus [\fn_\C,\fn_\C]$ and the following matrix representation:
\[
\Ad_\gamma|_{\fn_\C}=
 \begin{pmatrix}
  \phi_{t_0}|_V&0\\
  *&{\phi_{t_0}}|_{[\fn_\C,\fn_\C]}
 \end{pmatrix}.
\]
 Therefore, the eigenvalues of $\Ad_\gamma|_{\fn_\C}$ coincides with them of $\phi_{t_0}$. Since $\fl_0$ is $\Ad_\gamma$-invariant, there exists $I\subset \{1,2,\cdots,\dim \fn\}$ such that $\tr \Ad_\gamma|_{\fl_0}=f_I(t_0)$. Then we only have to show that $f_I$ is not constant.
It is enough to show that there exists $k\in I$ satisfying $\lambda_k\neq 0$. Assume $\lambda_k=0$ for any $k\in I$ then the eigenvalues of $\Ad_\gamma|_{\fl_0}$ are all 1. Then by the matrix representation $\Ad_\gamma|_{\fn_\C}$, we have $\fl_0\otimes \C\subset [\fn_\C,\fn_\C]$, which contradicts the condition $\fl_0\not\subset [\fn,\fn]$. Therefore, we obtain $f_I\in F$.   
\end{proof}

\begin{proof}[{\rm \bf Proof of Lemma~\ref{Lem:discrete}.}]
It is enough to show that $\pr_\R(\Gamma)$ is included in a countable and closed subset of $\R$. 
We put a finite set $F$ as in Lemma~\ref{Lem:discretelem}, and put subsets $A, B\subset \R$ as follows:
 \begin{align*}
  A&:=\set{t\in\R}{\fn(\phi_t,1)=[\fn_\C,\fn_\C]},\\
  B&:=\bigcup_{f\in F} f\inv(\Z).
 \end{align*}
 Then $A^c:=\R- A$ and $B$ are countable and closed.
Take any $\gamma= (t,v)\in\Gamma$. We are enough to show that $t\in A^c\cup B$. Assume $t\in A$, then there exists $f\in F$ such that $\tr \Ad_{\gamma}|_{\fl_0}= f(t)$ by Lemma~\ref{Lem:discretelem}. Since the subalgebra $\fl_0$ is abelian and has an $\Ad_\gamma$-invariant uniform lattice, $\tr \Ad_\gamma|_{\fl_0}$ must be integer by Proposition~\ref{Prop:invlattice}. Therefore we have $t\in f\inv (\Z)\subset  B$. 
\end{proof}

Finally, we prove Proposition~\ref{Prop:existenceofL'}.

\begin{proof}[{\rm \bf Proof of Proposition~\ref{Prop:existenceofL'}}]
Take a discrete subgroup $\Gamma\subset G_{D,D'}$ acting on $G_{D,D'}/H$ cocompactly. By Lemma~\ref{Lem:discreteandL'}, we are enough to show that $\pr_1(\Gamma)$ is discrete. 
We consider the natural surjection $\pi:G_{D,D'}\to G_{D,D'}/\mathcal{Z}_{H_n}\simeq S_W$ (see Note~\ref{Note:LCwform}) and put $\widetilde{G_{D,D'}},\ \widetilde{H}$ and $\widetilde{\Gamma}$ the image by $\pi$ of $G_{D,D'},H$ and $\Gamma$, respectively. By Property~\ref{Property:centerquotient}, $\widetilde{\Gamma}$ acts on $\widetilde{G_{D,D'}}/\widetilde{H}$ cocompactly. We consider $\wt{\Gamma}$ as a subgroup of $S_W$. We put $\Gamma_0:=\Gamma\cap H_n$. 
\begin{enumerate}
 \renewcommand{\labelenumi}{(\Alph{enumi})}
 \item The case $\Gamma_0\subset \mathcal{Z}_{H_n}$.\par We are enough to show $\pr_1(\Gamma)\subset A$ for $A:=\set{t\in\R}{W_t \text{ has an eigenvalue }1}$. Here, note that $A\subset \R$ is closed and countable. Assume $\pr_1(\Gamma)\not\subset A$. Then there exists $(t_0,v_0)\in \widetilde{\Gamma}$, where $t_0\in \R-A$ and $v_0\in\R^{2n}$. By $[\Gamma,\Gamma]\subset \Gamma_0\subset \mathcal{Z}_{H_n}$, $\widetilde{\Gamma}$ is abelian. Especially, we have $\widetilde{\Gamma}\subset \mathcal{Z}_{S_W}((t_0,v_0))=\A_{x}\mathcal{Z}_{S_W}(t_0)$, where $x:=(\id-W_{t_0})\inv v_0$. By Lemma~\ref{Lem:centerlizer}, we have:
       \[
	\widetilde{\Gamma}\subset \A_{x}\mathcal{Z}_{S_W}(t_0)\subset \A_{x} \R. 
       \]
       On the other hand, since the $\R$-action on $\widetilde{G_{D,D'}}/\widetilde{H}$ is not cocompact, neither is the $\widetilde{\Gamma}$-action, which contradicts the cocompactness of $\widetilde{\Gamma}$-action (Property~\ref{Property:centerquotient}). Then we have $\pr_1(\Gamma)\subset A$.
 \item The case $\Gamma_0\not\subset \mathcal{Z}_{H_n}$.
       \begin{enumerate}
	\item $\Gamma_0$ is commutative.\\ We denote by $\fl_0$ the Lie algebra of the syndetic hull of $\Gamma_0$. By using Lemma~\ref{Lem:discrete} for $(N,M,\Gamma)=(H_n,\diag{W,0},\Gamma)$, we get $\pr_1(\Gamma)$ is discrete. Actually, it is easy to check the conditions in Lemma~\ref{Lem:discrete} as follows. 
	      \begin{enumerate}
	       \renewcommand{\labelenumiii}{(\arabic{enumiii})}
	       \item $\fn(M,0)=\fz\otimes \C(=[\fh_n,\fh_n]\otimes \C)$.
	       \item $\Gamma_0$ is commutative by the assumption (a).
	       \item By the assumption $\Gamma_0\not \subset\mathcal{Z}_{H_n}$, we have $\fl_0\not\subset [\fh_n,\fh_n]=\fz$.
	       \end{enumerate}	      
	\item $\Gamma_0$ is non-commutative.\\ We have $[\Gamma_0,\Gamma_0]$ is non-trivial and so $\mathcal{Z}_{H_n}/(\mathcal{Z}_{H_n}\cap \Gamma_0)$ is compact. By Property~\ref{Property:centerquotient}, $\widetilde{\Gamma}$ is a discrete subgroup of $\widetilde{G_{D,D'}}$ and acts on $\widetilde{G_{D,D'}}/\widetilde{H}$ cocompactly. By using Lemma~\ref{Lem:discrete} for $(N,M,\Gamma)=(\R^{2n},W,\widetilde{\Gamma})$, we have the discreteness of $\pr_1(\Gamma)=\pr_\R(\wt{\Gamma})$. Actually, it is easy to check the conditions in Lemma~\ref{Lem:discrete} as follows.
	      \begin{enumerate}
	       \renewcommand{\labelenumiii}{(\arabic{enumiii})}
	       \item $\fn(M,0)=\{0\}(=[\R^{2n},\R^{2n}]\otimes\C)$.
	       \item Since $N$ is commutative, so is $\wt{\Gamma_0}:=\wt{\Gamma}\cap N$.
	       \item By the assumption $\Gamma_0\not \subset\mathcal{Z}_{H_n}$, we have $\wt{\fl_0}\not\subset [\R^{2n},\R^{2n}]=\{0\}$, where $\wt{\fl_0}$ is the Lie algebra of the syndetic hull of $\wt{\Gamma_0}$.
	       \end{enumerate}
       \end{enumerate}
\end{enumerate}
\end{proof}

\subsection{(L) condition}\label{Subsection:Lcondition}

In this subsection, our goal is to prove Proposition~$\ref{Prop:mainprop}$. To do this, we introduce (L) condition.

\begin{defi}[(L) condition]\label{Def:lcondition}
 We say a closed subgroup $L'\subset G_{D,D'}$ satisfies (L) {\it condition} if the following conditions are satisfied.
 \begin{enumerate}
  \item $L'$ is unimodular,
  \item $L_0 = L' \cap H_{n}$ is connected,
  \item there exists $\ell\in L'-L_0$ satisfying $L' = \langle \ell\rangle L_0 $.
 \end{enumerate}
\end{defi}

Clearly, intermediate syndetic hulls satisfy (L) condition.
We see a fundamental property of (L) condition, namely, we have:
\begin{prop}[Criterion of properness and cocompactness]\label{Prop:criofcompactproper}
Suppose a closed subgroup $L'\subset G_{D,D'}$ satisfies (L) condition and put $L_0:=L'\cap H_n$. Then the following conditions are equivalent:
\begin{eqenum}
 \item the $L'$-action on $G_{D,D'}/H$ is proper and cocompact,
 \item the pair $(L_0,H)$ satisfies the property (CI) in $G_{D,D'}$ and the $L_0$-action on $H_n/H$ is cocompact,
 \item there exists a matrix $C\in M(n,\R)$ such that $L_0=L_C$ and $A_t + B_t C$ is invertible for any $t\in \R$ (see Notation \ref{notation:second}).
\end{eqenum}
\end{prop}

\begin{proof}
 Take $\ell\in L'$ satisfying $L'=\gen{\ell}L_0$. First we show the equivalence (a)$\siff$(b). Since the $L'$-action on $G_{D,D'}/H_n$ is cocompact, it follows from Propositions~\ref{Prop:propciequiv} and \ref{Prop:criofccp} by putting $(G,N,L_0,L_1,H)=(G_{D,D'},H_n,L_0,\gen{\ell},H)$. Here we need to check that the tuple satisfies the condition of Setting~\ref{set:propersetting}. The condition $L_1\subset \mathcal{N}_{G}(L_0)$ is clear. Take $T\in\fg_{D,D'}$ satisfying $\exp T=\ell$, we have $\exp \R T\pitchfork H$ in $G_{D,D'}$ by Lemma~\ref{Lem:solvproper}, and so $L_1\pitchfork N$.\par
 Next we show the equivalence (b)$\siff$(c). By Proposition~\ref{Prop:propciequiv} for $G=N=H_n$, the condition that $(L_0,H)$ satisfies the property (CI) in $G_{D,D'}$ implies $L_0\pitchfork H$ in $H_n$. Then the equivalence (b)$\siff$(c) follows from Propositions~\ref{Prop:solvablecri}, \ref{Prop:decomposition} and \ref{Prop:properequiv}.
\end{proof}

Finally, we prove Proposition~\ref{Prop:mainprop}. 
\begin{proof}[\rm \bf Proof of Proposition~\ref{Prop:mainprop}]
 First, we show the implication (a)$\imply$(b). Take a discrete subgroup $\Gamma\subset G_{D,D'}$ such that $\Gamma\bs G_{D,D'}/H$ is a compact Clifford--Klein form. By Proposition~\ref{Prop:existenceofL'}, we take an intermediate syndetic hull $L'\subset G_{D,D'}$ of $\Gamma$. Put $L_0:=L'\cap H_n$ and take $\ell\in\Gamma-H$ satisfying $L'=\gen{\ell}L_0$. By Fact~\ref{Fact:contianalogy}, the $L'$-action on $G_{D,D'}/H$ is proper and cocompact. Take $C\in M(n,\R)$ such that $L_0=L_C$ and $A_t+B_tC$ is invertible for any $t\in\R$ by Proposition~\ref{Prop:criofcompactproper}. Hence the condition (b)(i) holds.
 Moreover, by Proposition~\ref{Prop:existenceofL'}(iii), $L_C$ admits an $\A_\ell$-invariant uniform lattice $\Gamma\cap H_n$ and so the condition (b)(ii) holds. \par
 Next, we check the implication (b)$\imply$(a). Take $C\in M(n,\R)$ and $\ell\in G_{D,D'}-H_n$ satisfying the condition (b) and let $\Gamma_0\subset L_C$ be an $\A_\ell$-invariant uniform lattice. Put $L'\coloneqq\langle \ell\rangle L_C$ and $\Gamma\coloneqq \langle \ell\rangle \Gamma_0$. Note that $L'$ satisfies the (L) condition and $\Gamma$ is a uniform lattice of $L'$. The $L'$-action is proper and cocompact by Proposition~\ref{Prop:criofcompactproper}. 
 Therefore, $\Gamma\bs G_{D,D'}/H$ is a compact Clifford--Klein form by Note~\ref{note:properisfree}.\par
 Finally, the implication (b)(ii)$\imply$(ii') comes from Proposition~\ref{Prop:lclattice}.
\end{proof}

\section{Proof of the main theorem}\label{Section:Proof}

In this section, we give a proof of the main theorem (Theorem~\ref{Thm:mainthm}).



We check the existence of compact Clifford--Klein forms for the spaces with signature $(2,2)$ which correspond to each case in Fact~\ref{Fact:classification}.

\subsection{On the spaces which correspond to the triples $(\fg_{\rm nil},\sigma, g_\pm)$}

We consider the spaces which correspond to Case~(I) in Fact~\ref{Fact:classification}.

\subsubsection{transvection group}

Note that $G_{\rm nil}$ is the transvection group of the symmetric spaces $(G_{\rm nil}/H,\sigma, g_\pm)$.

\begin{prop}
 The symmetric space $G_{\rm nil}/H$ does not admit compact Clifford--Klein forms.
\end{prop}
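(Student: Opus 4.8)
The plan is to show that $G_{\mathrm{nil}}/H$ falls outside the scope of the only available mechanism for producing compact Clifford--Klein forms on a $1$-connected solvable (indeed nilpotent) homogeneous space, namely a constructor. Since $G_{\mathrm{nil}}$ is nilpotent it is completely solvable, so by Fact~\ref{Fact:solvablesyndetichull} a compact Clifford--Klein form $\Gamma\bs G_{\mathrm{nil}}/H$ exists if and only if $G_{\mathrm{nil}}/H$ admits a constructor $L$ which moreover carries a uniform lattice. (Here $\Gamma$ has a syndetic hull $L$, and $L$ is automatically a constructor by Fact~\ref{Fact:contianalogy}.) Thus it suffices to rule out the existence of a constructor of $G_{\mathrm{nil}}/H$, or at least of one admitting a uniform lattice.

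\textbf{Step 1: classify the candidate constructors.} A constructor $L\subset G_{\mathrm{nil}}$ must act properly (hence, by Proposition~\ref{Prop:solvablefree}, freely) and cocompactly on $G_{\mathrm{nil}}/H$; by Proposition~\ref{Prop:solvablecri} this forces $\fg_{\mathrm{nil}}=\fl\oplus\fh$ as a vector space, where $\fh=\R B$. Since $\fh$ is commutative, Proposition~\ref{Prop:subalgcondition} applies: $\fl=\fl_\phi$ is the graph of a unique linear map $\phi:\fq\to\fh$, $\fq=\xspan{\R}{A_1,A_2,C_1,C_2}$, satisfying condition (ii), i.e. $\phi\bigl([\phi(X),Y]+[X,\phi(Y)]\bigr)=[X,Y]$ for all $X,Y\in\fq$. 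Writing $\phi(A_i)=a_iB$, $\phi(C_j)=c_jB$ and using $[A_1,A_2]=B$, $[B,A_i]\in\fz:=\xspan{\R}{C_1,C_2}$, I would expand condition (ii) on the pair $(A_1,A_2)$. Because $[\phi(A_1),A_2]+[A_1,\phi(A_2)]=a_1[B,A_2]+a_2[A_1,B]$ lies in $\fz$ and $\phi(\fz)$ is spanned by $B$, the left-hand side is a multiple of $B$, while $[A_1,A_2]=B$ on the right; on all other pairs the right-hand side $[X,Y]$ vanishes. The upshot should be that $\phi$ is heavily constrained, and in particular that the subalgebra $\fl_\phi$ must contain $\fz$ (compare Lemma~\ref{Lem:centerin}, which applies since $\fh_2=\xspan{\R}{A_1,A_2}\oplus\fz$ sits inside $\fg_{\mathrm{nil}}$ with $\fh=\R B$ — wait, $B$ is not in this copy; more carefully one argues directly that $\fz\subset\fl$ because $\fz\cap\fh=0$ and $\fz\subset[\fg,\fg]$ forces $\phi|_\fz$ to land in $\fh\cap\fz=0$, so $\fz\subset\fl_\phi$).

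\textbf{Step 2: identify the isomorphism type of $L$ and apply the trace obstruction.} Once $\fl$ is pinned down, $L$ will be a $1$-connected nilpotent (in fact abelian or Heisenberg-type) group, and $L_0:=L\cap H_n$ — or rather the nilradical-type analysis of Proposition~\ref{Prop:tracecondition} / Proposition~\ref{Prop:SMlatticecondition} — should be invoked. The cleanest route: show that any such $\fl$ is $4$-dimensional, abelian or $2$-step, and that the action of the "$W$-direction" (here the direction generated by $A_1$ or $A_2$ modulo $\fl$) acts on the complementary nilpotent piece with nonzero trace, contradicting Proposition~\ref{Prop:invlattice}/\ref{Prop:SMlatticecondition}; alternatively, show no complementary subalgebra $\fl$ exists at all because condition (ii) is unsolvable. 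I would first try to prove the stronger statement that \emph{no} subalgebra $\fl$ with $\fg_{\mathrm{nil}}=\fl\oplus\fh$ exists: since $\dim\fg_{\mathrm{nil}}=5$ and $\dim\fh=1$, $\fl$ would be a $4$-dimensional subalgebra, and one checks whether $\fg_{\mathrm{nil}}$ has one transverse to $B$. Given that $B=[A_1,A_2]$ and $B$ acts nontrivially, the span of $\{A_1,A_2,C_1,C_2\}$ is not a subalgebra (it's not closed: $[A_1,A_2]=B\notin$ it), and condition (ii) measures exactly the failure; a short case analysis on $(a_1,a_2)$ should finish it.

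\textbf{The main obstacle} is Step 1 — carrying out the case analysis of condition (ii) of Proposition~\ref{Prop:subalgcondition} cleanly and being sure one has found \emph{all} complementary subalgebras (not merely the "obvious" ones), including the possibility that $\fl$ is nonabelian. If a transverse subalgebra $\fl$ does exist, the work shifts entirely to Step 2, where one must show the corresponding constructor cannot contain a uniform lattice; this would rely on computing the relevant trace (the "$\operatorname{tr} D'C=0$"-type condition, here an analogous trace of the adjoint action on the abelianized nilradical) and showing it is forced to be nonzero by the structure constants $[B,A_i]=C_{3-i}$. Either way, the logical skeleton — completely solvable $\Rightarrow$ constructor with lattice $\Leftrightarrow$ compact Clifford--Klein form, then obstruct the constructor — is exactly the pattern established for $G_{D,D'}/H$ in Section~4, specialized to the nilpotent case.
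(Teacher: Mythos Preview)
Your overall strategy---reduce to the existence of a constructor via complete solvability (Fact~\ref{Fact:solvablesyndetichull}, Fact~\ref{Fact:contianalogy}), then rule out complementary subalgebras using Proposition~\ref{Prop:subalgcondition}(ii)---is exactly the paper's approach, and your instinct that the stronger statement ``no complementary subalgebra exists at all'' is what one should prove is correct. Step~2 (the trace/lattice obstruction) is entirely unnecessary here.

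The gap is in your argument for $\phi|_\fz=0$. You write that ``$\fz\cap\fh=0$ and $\fz\subset[\fg,\fg]$ forces $\phi|_\fz$ to land in $\fh\cap\fz=0$'', but this is not valid: $\phi$ is an arbitrary linear map $\fq\to\fh$, and nothing about $\fz\subset[\fg,\fg]$ constrains where $\phi$ sends $\fz$ a priori. (Lemma~\ref{Lem:centerin} does not apply either, since $\fh=\R B$ is not the ``$Y_i$''-span of a Heisenberg subalgebra in the way that lemma requires.) You must instead extract $\phi(\fz)=0$ from condition~(ii) itself: writing $\phi(C_i)=k_iB$ and applying (ii) to the pair $(A_j,C_i)$ with $j\neq i$ gives
\[
\phi\bigl(k_i[A_j,B]\bigr)=[A_j,C_i]=0,
\]
and since $[A_j,B]=-C_{3-j}=-C_i$, this reads $-k_i^2 B=0$, so $k_i=0$. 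Once $\phi(\fz)=0$, your own observation finishes it: on $(A_1,A_2)$ the left side of (ii) is $\phi(a_1C_1-a_2C_2)=0$ while the right side is $B$, a contradiction. So no constructor exists, and the ``short case analysis on $(a_1,a_2)$'' you anticipate is not needed.
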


\begin{proof}
 Assume $G_{\rm nil}/H$ admits a compact Clifford--Klein form $\Gamma\bs G_{\rm nil}/H$. Since the Lie group $G_{\rm nil}$ is 1-connected and nilpotent, there exists a constructor $L$ including $\Gamma$ cocompactly by Fact~\ref{Fact:solvablesyndetichull}. By Proposition~\ref{Prop:solvablecri}, we get $\fg_{\rm nil}=\fl\oplus\fh$, where $\fl$ and $\fh$ are the Lie algebras of $L$ and $H$, respectively. On the other hand, there is no such a subalgebra $\fl$ by Lemma~\ref{Lem:nosubalgebra}.
\end{proof}

In this proof, we use the following lemmas.

\begin{lem}\label{Lem:nosubalgebra}
 For $\fg_{\text{nil}}$ and $\fh$ in Definition~\ref{Def:gnil}, no subalgebra $\fl\subset \fg_{\rm nil}$ satisfies $\fg_{\rm{nil}}=\fl\oplus\fh$.
\end{lem}

\begin{proof}
 Take $\fq$ as in Definition~\ref{Def:gnil}. Suppose a subalgebra $\fl\subset \fg_{\rm nil}$ satisfies $\fg_{\rm{nil}}=\fl\oplus\fh$. We denote by $\fz_0$ the center of $\fg_{\rm nil}$. Since the element $C_1,C_2\in\fz_0\cap \fq$ satisfy the assumption of Lemma~\ref{Lem:zinl} below, we have $\fz_0\subset \fl$. Take $k_1,k_2\in\R$ satisfying $A_1+k_1B, A_2+k_2B\in\fl$.
Then $[A_1+k_1B,A_2+k_2B]-B=k_1C_1+k_2C_2\in\fz\subset \fl$ and so we have $B\in \fl$, which is a contradiction.
\end{proof}

\begin{lem}\label{Lem:zinl}
 Let $\fg$ be a Lie algebra, $\fz_0$ its center and $\fg=\fq\oplus\fh=\fl\oplus\fh$ its decompositions as a linear space. Assume $\fh$ and $\fl$ are subalgebras of $\fg$ and $\fh$ is abelian. For $Z_0\in\fz_0\cap \fq$ satisfying the following condition, we have $Z_0\in\fl$.
 \[
  \forall Y\in(\fh-\{0\}),\ \exists X\in\fq \st [X,Y]=Z_0.
 \]
\end{lem}

\begin{proof}
 Take the linear map $\phi:\fq\to\fh$ satisfying $\fl=\set{x+\phi(x)}{x\in\fq}$. Since $\phi(Z_0)=0$ implies $Z_0\in\fl$, we assume $\phi(Z_0)\neq 0$. Then there exists $X\in\fq$ satisfying $[X,\phi(Z_0)]=Z_0$, so we have $\fl\ni[X+\phi(X),Z_0+\phi(Z_0)]=[X,\phi(Z_0)]=Z_0$.
\end{proof}

\subsubsection{isometry group}

First, we calculate isometry group of the symmetric spaces.

\begin{prop}
 We have a Lie group isomorphism $\Gi\simeq (\Z/2\Z\ltimes O(1,1))\ltimes\Gt$. Especially, $\Gi$ is completely solvable.
\end{prop}

In the following, we calculate the group $\Hi$.

\begin{lem}\label{Lem:autolemnil}
 Using the basis $(A_1,A_2,B,C_1,C_2)$ of $\fg_{\rm nil}$, we have:
 \begin{align*}
 \Aut(\fg_{\rm nil},\sigma,g_\pm)&=\left\langle
\begin{pmatrix}
  I_{1,1}&&\\
  &-1&\\
  &&I_{1,1}\\
 \end{pmatrix},
  \begin{pmatrix}
  P&&\\
  &1&\\
  &&P\\
  \end{pmatrix},
\begin{pmatrix}
  I_2&&\\
  &1&\\
  aI_2&&I_2\\
 \end{pmatrix}
  :\begin{gathered}
  a\in\R,\\ P\in\gen{P_t,J,-I_2}
   \end{gathered}\right\rangle \\
  &\simeq (\Z/2\Z\ltimes O(1,1))\ltimes \R,
 \end{align*}
 where $P_t:=\exp (-tI_{1,1})=\diag{e^{-t_0},e^{t_0}}$ and $J:=\begin{pmatrix}
								&-1\\1
							       \end{pmatrix}$.
\end{lem}

\begin{proof}
  Let $\phi\in \Aut(\fgt,\sigma,g_\pm)$. We use $(A_1,A_2,B,C_1,C_2)$ as the basis of $\fgt$. Then the representation matrix of $\phi$ is:
 \[
\phi=
 \begin{pmatrix}
  P&&\\
  &k&\\
  Q&&R\\
 \end{pmatrix}\in GL(5,\R),
 \]
 where $k\in\R^\times$ and $P,Q,R\in GL(2,\R)$. Then we have:
 \begin{align}
  &\text{$g_\pm$ is $\phi$ invariant} \iff \phi^Tg_\pm\phi=g_\pm
  &&\iff  \begin{cases}P^TJR=J\\
	   Q^TJP+P^TJQ=0
	     \end{cases}\\
  &[\phi(A_1),\phi(A_2)]=\phi([A_1,A_2])&&\iff \det P=k,\label{3}\\
  &[\phi(A_i),\phi(B)]=\phi([A_i,B])\ (i=1,2)&&\iff kP=R, \label{4}
 \end{align}
By the first identity of (0.7) and (0,5), we have $kP^TJP=J$. Then we have $k^2(\det P)^2=1$ and by (0.6), we have $k^4=1$, and so $k=\pm1$. By the second identity of (0.5), $P^TJQ$ is skew symmetric. We put $a\in\R$ by $P^T JQ=aJ$, then we have $Q=akP$. Moreover, a direct calculation leads us $\phi\in\Aut(\fgt,\sigma,g)$ if $\phi$ satisfies the conditions of Lemma~\ref{Lem:autolemnil}.

\end{proof}

\begin{lem}\label{Lem:Htisomgnil}
 By the identification $\Hi\simeq \Aut(\fgt,\sigma,g)$, we have:
 \[
  \Ht=\set{  \begin{pmatrix}
	       I_2&&\\&1&\\a I_2&&I_2
	      \end{pmatrix}
}{a\in\R}\simeq \R.
 \]
\end{lem}

Then we prove:

\begin{prop}
 $\Gi/\Hi$ admits compact Clifford--Klein forms.
\end{prop}

\begin{proof}
 We are enough to show that $(\Gi)_0/(\Hi)_0$ admits compact Clifford--Klein forms. Note that $\fgi\simeq \R\ltimes \fgt$. Let $X$ be the generator of $\R$, we have:
 \[
  [X,A_1]=A_2,\ [X,A_2]=A_1,\ [X,B]=0,\ [X,C_1]=C_2,\ [X,C_2]=C_1.
 \]
 Then we put $T_1:=-A_1-A_2-X,\ T_2:=A_1-A_2+2B-2C_1-2C_2$ and:
 \[
  \fl:=\xspan{\R}{T_1,T_2,C_1,C_2},\ L:=\exp L.
 \]
 Note that $[T_1,T_2]=T_2$. Then $\fl$ is an ideal of $\fgi$ and we have $\fgi=\fl\oplus\fh$. We are enough to show that:
 \begin{enumerate}
  \item $L\pitchfork \Hi$ in $\Gi$,
  \item $L$ acts on $\Gi/\Hi$ cocompactly,
  \item $L$ admits a uniform lattice $\Gamma$.
 \end{enumerate}
 We check the condition~(1). Note that $\fgi$ is 3-step nilpotent. By Fact~\ref{Fact:nasrin}, it is equivalent to the condition that $(L,H)$ is (CI) in $\Gi$. By Note~\ref{Note:nilpci}, we are enough to check that $\Ad_g\fl\cap \fh=\{0\}$ for all $g\in \Gi$. Since $\fl$ is an ideal, we have $\Ad_g\fl\cap \fh=\fl\cap\fh=\{0\}$.
 The condition (2) follows from Proposition~\ref{Prop:solvablecri}.
  Finally we check the condition (3). We put:
 \[
  \fl_0:=\xspan{\Z}{T_1,T_2,C_1,C_2},\ \Gamma:=\exp \fl_0.
 \]
  Then $\Gamma$ is a lattice of $L$, which construct a compact Clifford--Klein form.
\end{proof}

\subsection{On the spaces which correspond to the triples $(\fg_{D,D'},\sigma,g)$}

We consider the spaces which correspond to Case~(II) in Fact~\ref{Fact:classification}. These spaces are written as $G_{D,D'}/H$ for some matrices $D,D'\in\sym^{(\text{reg})}(n,\R)$. 

\subsubsection{transvection group}

Note that the transvection group of $(G_{D,D'}/H,\sigma,g)$ is $G_{D,D'}$.
We check the existence of compact Clifford--Klein forms by using Propositions~$\ref{Prop:mainprop}$ and $\ref{Prop:csmainprop}$. To do this, we introduce subsets of $M(n,\R)$.

\begin{defi}
 We define the following sets:
 \begin{itemize}
  \item $\mathcal{P}_{D,D'}:=\set{C\in M(n,\R)}{\text{$C$ satisfies the condition (b) (i) of Proposition~} \ref{Prop:mainprop}}\\ \hspace{1cm} =\set{C\in M(n,\R)}{A_t+B_tC \text{ is invertible for any } t\in\R}$,
  \item $\mathcal{L}_{D,D'}:=\set{C\in M(n,\R)}{\text{$C$ satisfies the condition (b) (ii') of Proposition~} \ref{Prop:mainprop}}\\ \hspace{1cm} =\set{C\in M(n,\R)}{\exists t_0\in\R^\times\st \fl'_C \text{ is $W_{t_0}$-invariant and $\det(W_{t_0}|_{\fl'_C})=\pm1$.}}$,
  \item $\mathcal{P}^c_{D,D'}:=\set{C\in M(n,\R)}{\text{$C$ satisfies the condition (b) (i) of Proposition~} \ref{Prop:csmainprop}}\\ \hspace{1cm} =\set{C\in M(n,\R)}{CD'C=D}$,
  \item $\mathcal{L}^c_{D,D'}:=\set{C\in M(n,\R)}{\text{$C$ satisfies the condition (b) (ii') of Proposition~} \ref{Prop:csmainprop}}\\ \hspace{1cm} =\set{C\in M(n,\R)}{\tr D'C=0}.$
 \end{itemize}
\end{defi}

In this subsection, we put $n=2$ and denote the five-dimensional Heisenberg Lie algebra by $\fh_2=\xspan{\R}{X_1,X_2,Y_1,Y_2,Z}$ and the Heisenberg Lie group by $H_2$. We also use the notation $W,W_t\in M(2n,\R)$, $A_t,B_t\in M(n,\R)$ as in Notation~\ref{notation:second}.
 
\begin{rem}
 The condition $\mathcal{P}_{D,D'} \cap \mathcal{L}_{D,D'}=\emptyset$ is a sufficient condition for the non-existence of compact Clifford--Klein forms by Proposition~\ref{Prop:mainprop}. In the completely solvable case, so is the condition $\mathcal{P}^c_{D,D'}\cap\mathcal{L}_{D,D'}^c=\emptyset$ by Proposition~\ref{Prop:csmainprop}.
\end{rem}

\stitle{The spaces which correspond to Case~(II)(a) in Fact~\ref{Fact:classification}}

We show that the spaces do not admit compact Clifford--Klein forms in this case.

\begin{enumerate}
  \renewcommand{\labelenumi}{(\roman{enumi})}
 \item The case $(D,D')=(\pm\diag{1,\nu},\diag{1,-\nu})$\quad ($\nu>0$).\\
\claim{$\mathcal{L}_{D,D'}=\emptyset$.}\\
 Take any $C\in \mathcal{L}_{D,D'}$ and put $V_1:=\xspan{\R}{X_1,Y_1}\subset \fh_2$ and $V_2:=\xspan{\R}{X_2,Y_2}\subset \fh_2$. Note that the eigenvalues of $W_t$ is $\{e^{\pm \nu i t}, e^{\pm t}\}$ or $\{e^{\pm \nu t}, e^{\pm it}\}$. Since $\fl'_C$ is $W_{t_0}$-invariant for some $t_0\in\R^\times$ and $W_{t_0}|_{V_1}, W_{t_0}|_{V_2}$ do not have common eigenvalues, we have $\fl'_C=(\fl'_C\cap V_1)\oplus(\fl'_C\cap V_2)$ (see Note~\ref{Note:linnote}). On the other hand, $V_1$ or $V_2$ does not admit non-trivial $W_{t_0}$-invariant subspaces. Then we have $\mathcal{L}_{D,D'}=\emptyset$.
       \begin{note}\label{Note:linnote}
	For $\mathbb{K}=\R$ or $\C$, let $A\in M(n,\mathbb{K})$ be a matrix. Suppose $\mathbb{K}^n=V_1\oplus V_2$ is an $A$-invariant decomposition such that $A|_{V_1}$ and $A|_{V_2}$ do not have common complex eigenvalues. For an $A$-invariant subspace $V\subset \mathbb{K}^n$, we have $V=(V\cap V_1)\oplus (V\cap V_2)$.\end{note}
 \item The case $(D,D')=(\diag{1,-\nu},\diag{1,-\nu})$\quad ($\nu>0$,\ $\nu\neq 1$).\\
       In this case, $\fg_{D,D'}$ is completely solvable (Lemma~\ref{Lem:eigenvalues}(3)).\\
       \claim{$\mathcal{P}^c_{D,D'}\cap \mathcal{L}^c_{D,D'}=\emptyset$.}\\  A direct calculation leads us $\mathcal{P}^c_{D,D'}=\{\pm I_2,\pm I_{1,1}\}$. Therefore, we have $\tr D'C=\pm(1\pm \nu)\neq 0$ for any $C\in\mathcal{P}^c_{D,D'}$ and so Claim holds.
 \item The case $(D,D')=(\diag{-1,\nu},\diag{1,-\nu})$\quad ($\nu>0$,\ $\nu\neq 1$).\\
       \claim{$\mathcal{P}_{D,D'}=\emptyset$.}\\
 In this case, for $t\in\R$ we have:
 \[
 A_t=\diag{\cos t,\cos \nu t},\quad B_t=\diag{\sin t, -\sin\nu t}.
 \]
 Therefore, for any $C=\begin{pmatrix}
	 a&b\\ c&d
	\end{pmatrix}\in M(2,\R)$, a direct calculation implies:
 \begin{align*}
  \det (A_t+B_tC)=&\dfrac{1}{2}\left(1+(ad-bc)\right)\cos(t+\nu t)+\dfrac{1}{2}\left(a+d\right)\sin(t+\nu t)\\&+\dfrac{1}{2}\left(1-(ad-bc)\right)\cos (t-\nu t)+\dfrac{1}{2}\left(a-d\right)\sin(t-\nu t).
 \end{align*}
 Then Claim is a consequence of the following:
\end{enumerate}

\begin{note}
 For $A, B,b,d\in\R$, and $a,c\in\R^\times$, put $f(t):=A\sin(a t+b)+B\sin(c t+d)$. Then we have $f(t)=0$ for some $t\in \R$.
\end{note}

\stitle{The spaces which correspond to Case~(II)(b) in Fact~\ref{Fact:classification}}

We show that the spaces do not admit compact Clifford--Klein forms in this case.

\begin{lem}\label{Lem:IIlem}
 Put $(D,D')=(Q_\nu,Q_{-\nu})$ for $\nu>0$. Then we have $\mathcal{P}_{D,D'}\cap \mathcal{L}_{D,D'}=\emptyset$.
\end{lem}

\begin{proof}

First, for $t\in\R$ we have:
 \[
 W_t=
 \begin{pmatrix}
  \cosh t&&&\sinh t\\
  &\cosh t&\sinh t&\\
  &\sinh t&\cosh t&\\
  \sinh t&&&\cosh t
 \end{pmatrix}
 \begin{pmatrix}
  \cos \nu t&&-\sin \nu t &\\
  &\cos \nu t&&\sin \nu t\\
  \sin \nu t&&\cos \nu t&\\
  &-\sin \nu t&&\cos \nu t
 \end{pmatrix}.
 \]
 Set $V_\pm:=\fl'_{\pm Q_0}$, namely, $V_+=\xspan{\R}{X_1+Y_2,X_2+Y_1}$ and $V_-=\xspan{\R}{X_1-Y_2,X_2-Y_1}$. Note that $\tr W|_{V_\pm}=\pm2$ and that two dimensional subspace $V\subset \R^{2n}$ is $W$-invariant if and only if $V=V_\pm$.\par
 To prove this lemma, we show the following:\\
 {\bf Claim.} $\mathcal{L}_{D,D'}\subset \set{\begin{pmatrix}
						 a&b\\c&d
						\end{pmatrix}\in SL(2,\R)}{b+c=0}$.\\
$\because$) Let $C\in \mathcal{L}_{D,D'}$ and take $t_0\in\R^\times$ such that $\fl'_C$ is $W_{t_0}$-invariant and $\det(W_{t_0}|_{\fl'_C})=\pm1$.\\
{\bf Subclaim~1.} $C\neq \pm Q_0$.\par
 This subclaim follows from $\det(W_{t_0}|_{V_\pm})=e^{\pm 2{t_0}}$.\\
{\bf Subclaim~2.} $\nu t_0\in\pi\Z$.\par
Assume $\nu t_0\not\in\pi\Z$. Since the eigenvalues of $W$ are $\{1\pm \nu i, -1\pm \nu i\}$ by Lemma~\ref{Lem:eigenvalues}(2), the eigenvalues of $W_{t_0}$ are distinct, and so $\fl'_C$ is $W$-invariant. Since $\fl'_C$ is two-dimensional, we have $\fl'_C=V_+$ or $V_-$, which contradicts Subclaim~1 and so we have proven the Subclaim~2.\par
Since $\fl'_C$ is $W_{t_0}$-invariant, by Note~\ref{Note:linearlem} and Subclaim~2, we have:
 \begin{align*}
  \text{$\fl'_C$ is $W_{t_0}$-invariant}&\iff
  (C, -I_2) W_{t_0}
 \begin{pmatrix}
  I_2\\ C
 \end{pmatrix}=O\\
 &\iff (C, -I_2 )
  \left((\cosh t_0)I_4+(\sinh t_0)
  \begin{pmatrix}
  0&Q_0\\
  Q_0&0
  \end{pmatrix} 
\right)
  \begin{pmatrix}
  I_2\\ C
  \end{pmatrix}=O \\
 &\iff (C, -I_2 )
 \begin{pmatrix}
  0&Q_0\\
  Q_0&0
 \end{pmatrix} 
 \begin{pmatrix}
  I_2\\ C
 \end{pmatrix}=O \\
 &\iff
 CQ_0C=Q_0\\
  &\iff
 C\in\{\pm Q_0\} \sqcup \set{\begin{pmatrix}
						 a&b\\c&d
						\end{pmatrix}\in SL(2,\R)}{b+c=0}.
 \end{align*}
By Subclaim~1, we have shown Claim.\par
 Finally, we prove the lemma. Take any
 $C=\begin{pmatrix}
     a&b\\c&d
    \end{pmatrix}\in \mathcal{L}_{D,D'}$. By a direct calculation, we have:
 \begin{align*}
  \det (A_t+B_tC)&=\dfrac{(b+c)}{2}\sinh 2t+\dfrac{(a-d)}{2}\sin 2\nu t+\cos 2\nu t(\cosh^2 t-(ad-bc)\sinh^2 t)\\
  &=\cos 2\nu t+\frac{a-d}{2}\sin 2\nu t.
 \end{align*}
 Therefore, $\det(A_t+B_t C)=0$ for some $t\in \R$, so we obtain $\mathcal{P}_{D,D'}\cap \mathcal{L}_{D,D'}=\emptyset$
\end{proof}

\stitle{The spaces which correspond to Case~(II)(c) in Fact~\ref{Fact:classification}}

In this case, the spaces do not admit compact Clifford--Klein forms.
\begin{enumerate}
  \renewcommand{\labelenumi}{(\roman{enumi})}
 \item The case $(D,D')= \left(\begin{pmatrix}\ep&-1\\-1&0\end{pmatrix},\begin{pmatrix}0&-1\\ -1&\ep\end{pmatrix}\right)$, where $\ep=\pm1$.\\
In this case, $\fg_{D,D'}$ is completely solvable (Lemma~\ref{Lem:eigenvalues}(3)).

\claim{$\mathcal{P}^c_{D,D'}\cap \mathcal{L}^c_{D,D'}=\emptyset$.}\\
 By a direct calculation, we have $\mathcal{P}^c_{D,D'}=\{\pm Q_0\}$. Thus, the claim follows from $\tr D'C=\pm 2$ for $C\in \mathcal{P}_{D,D'}^c$.
 \item The case $(D,D')= \left(\begin{pmatrix}\ep&-1\\-1&0\end{pmatrix},\begin{pmatrix}0&1\\ 1&-\ep\end{pmatrix}\right)$, where $\ep=\pm1$.\\
\claim{$\mathcal{P}_{D,D'}=\emptyset$.}\\
We have: 
\[
 A_t=\begin{pmatrix}
      \cos t& 0\\
      \varepsilon t\sin t&\cos t
     \end{pmatrix},\quad
     B_t=\begin{pmatrix}
	  0&\sin t\\
	  \sin t&-\varepsilon t\cos t
	 \end{pmatrix}.
\]
 Let $C=\begin{pmatrix}
	 a&b\\c&d
	\end{pmatrix}\in M(2,\R)$. If $d=0$, we have $\det(A_t+B_t C)=(\cos t+c\sin t)(\cos t+b\sin t)$. Hence $\det (A_t+B_tC)=0$ holds for some $t\in\R$ and so $C\not\in\mathcal{P}_{D,D'}$. Then we assume $d\neq 0$. For $m\in\Z$, we have $\det(A_{2m\pi}+B_{2m\pi}C)=1-2m\pi d\varepsilon$. Then there exist $t_1, t_2\in\R$ satisfying $\det(A_{t_1}+B_{t_1}C)>0,\ \det(A_{t_2}+B_{t_2}C)<0$. Hence we have $\det(A_t+B_tC)=0$ for some $t\in\R$ by the intermediate value theorem and so $C\not\in\mathcal{P}_{D,D'}$. Therefore we have $\mathcal{P}_{D,D'}=\emptyset$.
\end{enumerate}

\stitle{The spaces which correspond to Case~(II)(b) in Fact~\ref{Fact:classification}}

In this case, the spaces admit compact Clifford--Klein forms.

\begin{enumerate}
  \renewcommand{\labelenumi}{(\roman{enumi})}
 \item The case $(D,D')=(I_{1,1},I_{1,1})$.\\
       In this case, $\fg_{D,D'}$ is completely solvable (Lemma~\ref{Lem:eigenvalues}(3)), so we are enough to show that the space satisfies the condition (b) in Proposition~\ref{Prop:csmainprop}. Set $(C, w):=(I_2,0)\in M(2,\R)\times \fh$, then the condition (b)(i) $CD'C=D$ is clear. By Note~\ref{Note:LCwform}, we have $L_{C,w}\simeq S_M$ for $M:=\diag{1,-1,0}$. By Example~\ref{ex:lattice}, $L_{C,w}$ admits a uniform lattice and so the condition (b)(ii) holds.
 \item The case $(D,D')=(-I_{1,1}, I_{1,1})$.\\
It is enough to show the conditions (b) in Proposition~\ref{Prop:mainprop}. Put $C:=Q_0$. Since we have $A_t=(\cos t) I_2$ and $B_t=(\sin t)I_{1,1}$, we get $\det (A_t+B_t C)=\sin ^2t+\cos ^2t=1$, which implies the condition (b)(i). By Note~\ref{Note:structurelcw}, we have $L_C\simeq \R^3$ and $\A_\ell=\id$, where $\ell=(2\pi,e)\in G_{D,D'}$. Then the subgroup $L_C$ has an $\A_\ell$-invariant uniform lattice $\Gamma\simeq \Z^3$. Then the condition (b)(ii) holds.
\end{enumerate}

\subsubsection{isometry group}

First, we calculate isometry group of the symmetric spaces.

\begin{prop}\label{Prop:bunruiteiri}
 For matrices $D,D'\in \sym^{(\text{reg})}(2,\R)$, assume the signature of $D'$ is $(1,1)$. Then the following list gives a complete class representatives of symmetric triple $(\fg_{D,D'},\sigma,g)$.
  \begin{enumerate}
  \item $(D,D')=(\pm\diag{1,\nu},\diag{1,-\nu})\quad (\nu>0)$,\\ $(D,D')=(\pm\diag{1,-\nu},\diag{1,-\nu})\quad (\nu>0, \ \nu\neq 1)$, 
  \item $(D,D')=\left(Q_\nu,Q_{-\nu}\right)\quad (\nu>0)$,
  \item $(D,D')=\left(\begin{pmatrix}
	    \pm1&-1\\
	    -1&0
	   \end{pmatrix},
	\begin{pmatrix}
	 0& -1\\ -1&\pm 1
	\end{pmatrix}\right),\
	\left(\begin{pmatrix}
	    \pm1&-1\\
	    -1&0
	      \end{pmatrix},
	\begin{pmatrix}
	 0&1\\ 1&\mp1
	\end{pmatrix}\right)$,
   \item $(D,D')=(\pm I_{1,1},I_{1,1})$.
  \end{enumerate}
\end{prop}

\begin{prop}\label{Prop:isomgdd}
 For a pseudo-Riemannian symmetric space $(G_{D,D'}/H,\sigma,g)$, we have:
 \[
  \Gi\simeq \begin{cases}
	     (\gen{-I_2,I_{1,1}}\times\Z/2\Z)\ltimes \Gt\quad\text{(the spaces in the list (1))}\\
	     (\gen{-I_2}\times \Z/2\Z)\ltimes \Gt \quad\text{(the spaces in the list (2) and (3))}\\
	     (O(1,1)\times \Z/2\Z)\ltimes \Gt \quad\text{(the spaces in the list (4))}\\
	    \end{cases}
 \]
\end{prop}

\begin{proof}
 By Lemma \ref{Lem:transvection}, it follows from Lemma~\ref{Lem:AHt}.
\end{proof}

In the following, we calculate the group $\Hi$.

\begin{lem}\label{Lem:autolem}
 Using the basis $(W,X_1,X_2,Y_1,Y_2,Z)$ of $\fgt\simeq \R\ltimes \fh_2$, we have:
 \[
 \Aut(\fg_{D,D'},\sigma,g)=\set{ \begin{pmatrix}
  k&&&\\
  v&P&&\\
&&{kP^T}\inv&\\
  s&w&&k
       \end{pmatrix}\in GL(6,\R)}{\begin{gathered}
				   P\in O(1,1)\\ k\in\{\pm1 \},\ v\in\R^2\\
				   PDP^T=D
				  \end{gathered}},
 \]
 where $s:=(k/2)v^TI_{1,1}v,\ w:=kv^TI_{1,1}P$.
\end{lem}

\begin{proof}
 Let $\phi\in \Aut(\fgt,\sigma,g)$. We use $(W,X_1,X_2,Y_1,Y_2,Z)$ as the basis of $\fgt$. Then the representation matrix of $\phi$ is:
 \[
\phi=
 \begin{pmatrix}
  \ell_1&&&\\
  v&P&&\\
  &&Q&\\
  s&w&&\ell_2
 \end{pmatrix}\in GL(6,\R),
 \]
 where $v,w^T\in\R^2,\ s,\ell_1,\ell_2\in\R,\ P,Q\in GL(2,\R)$. Then we have:
 \begin{align}
  &\text{$g$ is $\phi$ invariant} \iff \phi^Tg\phi=g
  &&\iff  \begin{cases}P^T I_{1,1} P=I_{1,1},\\
	     \ell_1\ell_2=1,\\
	     v^TI_{1,1}P=\ell_1w,\\
	     v^TI_{1,1}v=2\ell_1s,\\
	     \end{cases}\\
  &[\phi(X_i),\phi(Y_j)]=\phi([X_i,Y_j])\ (i,j=1,2)&&\iff P^TQ=\ell_2I_2,\label{3}\\
  &[\phi(W),\phi(Y_i)]=\phi([W,Y_i])\ (i=1,2)&&\iff I_{1,1}P=\ell_1 QI_{1,1},\ v^TQ=w I_{1,1}, \label{4}\\
  &[\phi(W),\phi(X_i)]=\phi([W,X_i])\ (i=1,2)&&\iff \ell_1PD=DQ, \label{5}
 \end{align}
By the first identity of (0.1), we have $P\in O(1,1)$. On the other hand $Q=\ell_2{P^T}\inv$ by (\ref{3}). By the determinant relation of (0.3) and (0.4), we have $\ell_1=\pm1$. By the second identity of (0.1), we have $\ell_1=\ell_2=\pm 1$. We put $k:=\ell_1$, then $s:=(k/2)v^TI_{1,1}v$ and $w=kv^TI_{1,1}P$ follows from (0.1). Moreover, a direct calculation leads us $\phi\in\Aut(\fgt,\sigma,g)$ if $\phi$ satisfies the conditions of Lemma~\ref{Lem:autolem}.
 \end{proof}

\begin{lem}\label{Lem:Htisom}
 By the identification $\Hi\simeq \Aut(\fgt,\sigma,g)$, we have:
 \[
  \Ht=\set{   \begin{pmatrix}
  1&&&\\
  -I_{1,1}y&I_2&&\\
	       &&I_2&\\
  (y^TI_{1,1}y)/2&-y^T&&1
 \end{pmatrix}
}{y\in\R^n}
 \]
\end{lem}

\begin{proof}
 Note that $\Ht=\set{\exp (\sum_{i=1}^ny_iY_i)}{y\in\R^n}$. The adjoint action of $\exp(\sum_{i=1}^ny_iY_i)$ is:
 \[
  \ad(Y_i)=\begin{pmatrix}
  0&&&\\
  -I_{1,1}e_i&0&&\\
	       &&0&\\
  &-e_i^T&&0
 \end{pmatrix}
 \]
 This lemma follows from Lemma~\ref{Lem:hishape}.
\end{proof}

\begin{lem}\label{Lem:AHt}
 We put $A:=\set{\diag{k,P,{kP^T}\inv,k}}{P\in O(1,1), k\in \{\pm1\}, PDP^T=D}\subset \Hi$. Then $A$ is a closed subgroup of $\Hi$ and we have $\Hi= A\ltimes \Ht$.
\end{lem}

\begin{proof}
By a direct calculation, we see $A$ is a closed subgroup of $\Hi$. Note that $\Ht$ is a closed normal subgroup of $\Hi$. By Lemma~\ref{Lem:Htisom}, we have $\Ht\cap A=\{e\},\ \Hi=A\Ht$. Then this lemma follows from Note~\ref{Note:ANlem}.
\end{proof}

\begin{lem}
We have:
 \begin{align*}
  \set{\diag{k,P,{kP^T}\inv,k}}{P\in O(1,1), PDP^T=D, k\in \{\pm1\}}\\
  \simeq \begin{cases}\Z/2\Z\times O(1,1)\quad\text{(the spaces in the list (1))}\\ \Z/2\Z\times \gen{-I_2}\quad\text{(the spaces in the list (2) and (3))}\\\Z/2\Z\times \gen{-I_2,I_{1,1}}\quad\text{(the spaces in the list (4))}\end{cases}
 \end{align*}
\end{lem}

\begin{proof}
In the case $D=\pm I_{1,1}$, it is clear. In the case $D\neq I_{1,1}$, it follows from $PDP^T=D\siff P=\pm I_2$.
\end{proof}

\begin{cor}
 The space $\Gt/H$ admits compact Clifford--Klein forms if and only if so does $\Gi/H$.
\end{cor}

\begin{proof}
 In the case $D=I_{1,1}$, the space $\Gt/\Ht$ admits compact Clifford--Klein forms. In the case $D\neq \pm I_{1,1}$, we have $\Lie{\Gi}=\Lie{\Gt}$ by Proposition~\ref{Prop:isomgdd}, so it does not admit compact Clifford--Klein forms by Note~\ref{Note:component} and .
\end{proof}

\section{Kobayashi's conjecture about standard quotients}\label{Section:Onkobayashisconjecture}

There have been attempts to extend Kobayashi's theory on discontinuous groups for reductive cases [17-23] to non-reductive cases such as Baklouti-K\'edim\cite{BK}, Kath-Olbrich\cite{OKCK}, Kobayashi-Nasrin\cite{KN}, Lipsman\cite{lipsman}, Nasrin\cite{nasrin}, Yoshino\cite{3step} and so on.
In this section, we examine a `solvable analogue' of Kobayashi's conjecture (Conjecture \ref{Conj:Kobayashiconjecture}) and see an evidence that the assumption `reductive type' in Kobayashi's conjecture is crucial. 

\begin{ex}{\label{ex:ceforkobconj}}
 We put $n=3$ and $(D,D'):=(\diag{-1,-1,2}, \diag{1,1,-2})$. Then $G_{D,D'}/H$ admits compact Clifford--Klein forms and does not admit constructors.
\end{ex}

\begin{proof}
 First, we check $G_{D,D'}/H$ does not admit constructors. Assume $G_{D,D'}/H$ admits constructors. Then there exists $C\in M(3,\R)$ such that $CD'C=D$ by Proposition~\ref{Prop:structurethm}. However, we obtain $(\det C)^2=-1$, which contradicts $C\in M(3,\R)$.\par
 Next, we check $G_{D,D'}/H$ admits compact Clifford--Klein forms by using Proposition~\ref{Prop:mainprop}. We set:
\[
 C\coloneqq \begin{pmatrix}0&1&0\\1&0&-2\\0&-1&0\end{pmatrix}.
\]
 It is enough to check that the conditions (b)(i) and (ii) in Proposition~\ref{Prop:mainprop}. A direct calculation leads us that $A_t=\diag{\cos t,\cos t,\cos 2t}$, $B_t=\diag{\sin t,\sin t, -\sin 2t}$, $\det (A_t+B_tC)=\cos^2(2t)+\sin^2 (2t)=1$ and so the condition (i) holds. Set $t_0\coloneqq 2\pi\in\R$ and $\ell:=(t_0,e)\in G_{D,D'}$. Then we have $W_{t_0}=I_6$ and so $\A_{\ell}=\id$. Then $L_C\simeq H_1\times\R$ (Note~\ref{Note:structurelcw}) has an $\A_\ell$-invariant uniform lattice $\Gamma\simeq H_1(\Z)\times \Z$, and so the condition (ii) is satisfied.
\end{proof}

\section*{Acknowledgments}

The author would like to thank his supervisors Professor Toshiyuki Kobayashi and Professor Taro Yoshino for many constructive comments. This work was supported  by the Program for Leading Graduate Schools, MEXT, Japan.

\bibliography{thesis_for_arxiv}

\begin{thebibliography}{10}

\bibitem{BK}
A.~Baklouti and I.~K\'edim.
\newblock On non-abelian discontinuous subgroups acting on exponential solvable
  homogeneous spaces.
\newblock {\em Int. Math. Res. Not. IMRN}, (7):1315--1345, 2010.

\bibitem{BK05}
A~Baklouti and F~Khlif.
\newblock Proper actions and some exponential solvable homogeneous spaces.
\newblock {\em Internat. J. Math.}, 16(9):941--955, 2005.

\bibitem{Benoist}
Y.~Benoist.
\newblock Actions propres sur les espaces homog\`enes r\'eductifs.
\newblock {\em Ann. of Math. (2)}, 144(2):315--347, 1996.

\bibitem{B}
M.~Berger.
\newblock Les espaces sym\'etriques noncompacts.
\newblock {\em Ann. Sci. \'Ecole Norm. Sup. (3)}, 74:85--177, 1957.

\bibitem{borel}
A.~Borel.
\newblock Compact {C}lifford-{K}lein forms of symmetric spaces.
\newblock {\em Topology}, 2:111--122, 1963.

\bibitem{bourbaki}
N.~Bourbaki.
\newblock {\em \'El\'ements de math\'ematique. {F}asc. {XXXVII}. {G}roupes et
  alg\`ebres de {L}ie. {C}hapitre {II}: {A}lg\`ebres de {L}ie libres.
  {C}hapitre {III}: {G}roupes de {L}ie}.
\newblock Hermann, Paris, 1972.
\newblock Actualit\'es Scientifiques et Industrielles, No. 1349.

\bibitem{CP}
M.~Cahen and M.~Parker.
\newblock {\em Pseudo-Riemannian symmetric spaces}.
\newblock Memoirs of the AMS, vol.24, No.229, American Mathematical Society,
  1980.

\bibitem{CW}
M.~Cahen and N.~Wallach.
\newblock Lorentzian symmetric spaces.
\newblock {\em Bull. Amer. Math. Soc.}, 76:585--591, 1970.

\bibitem{cartan1}
\'E. Cartan.
\newblock Sur une classe remarquable d'espaces de {R}iemann.
\newblock {\em Bull. Soc. Math. France}, 54:214--264, 1926.

\bibitem{cartan2}
\'E. Cartan.
\newblock Sur une classe remarquable d'espaces de {R}iemann. {II}.
\newblock {\em Bull. Soc. Math. France}, 55:114--134, 1927.

\bibitem{repofnilp}
L.~J. Corwin and F.~P. Greenleaf.
\newblock {\em Representations of nilpotent {L}ie groups and their
  applications. {P}art {I}}, volume~18 of {\em Cambridge Studies in Advanced
  Mathematics}.
\newblock Cambridge University Press, Cambridge, 1990.
\newblock Basic theory and examples.

\bibitem{goldman}
W.~M. Goldman.
\newblock Nonstandard {L}orentz space forms.
\newblock {\em J. Differential Geom.}, 21(2):301--308, 1985.

\bibitem{Hochschild}
G.~Hochschild.
\newblock {\em The structure of {L}ie groups}.
\newblock Holden-Day, Inc., San Francisco-London-Amsterdam, 1965.

\bibitem{KK16}
F.~Kassel and T.~Kobayashi.
\newblock Poincar\'{e} series for non-{R}iemannian locally symmetric spaces.
\newblock {\em Adv. Math.}, 287:123--236, 2016.

\bibitem{OK}
I.~Kath and M.~Olbrich.
\newblock On the structure of pseudo-{R}iemannian symmetric spaces.
\newblock {\em Transform. Groups}, 14(4):847--885, 2009.

\bibitem{OKCK}
I.~Kath and M.~Olbrich.
\newblock Compact quotients of {C}ahen-{W}allach spaces.
\newblock {\em Mem. Amer. Math. Soc.}, 262(1264):v+84, 2019.

\bibitem{kob89}
T.~Kobayashi.
\newblock Proper action on a homogeneous space of reductive type.
\newblock {\em Math. Ann.}, 285(2):249--263, 1989.

\bibitem{kob92b}
T.~Kobayashi.
\newblock Discontinuous groups acting on homogeneous spaces of reductive type.
\newblock In {\em Representation theory of {L}ie groups and {L}ie algebras
  ({F}uji-{K}awaguchiko, 1990)}, pages 59--75. World Sci. Publ., River Edge,
  NJ, 1992.

\bibitem{kob92a}
T.~Kobayashi.
\newblock A necessary condition for the existence of compact {C}lifford-{K}lein
  forms of homogeneous spaces of reductive type.
\newblock {\em Duke Math. J.}, 67(3):653--664, 1992.

\bibitem{kob93}
T.~Kobayashi.
\newblock On discontinuous groups acting on homogeneous spaces with noncompact
  isotropy subgroups.
\newblock {\em J. Geom. Phys.}, 12(2):133--144, 1993.

\bibitem{kob96}
T.~Kobayashi.
\newblock Criterion for proper actions on homogeneous spaces of reductive
  groups.
\newblock {\em J. Lie Theory}, 6(2):147--163, 1996.

\bibitem{kob97}
T.~Kobayashi.
\newblock Discontinuous groups and {C}lifford-{K}lein forms of
  pseudo-{R}iemannian homogeneous manifolds.
\newblock In {\em Algebraic and analytic methods in representation theory
  ({S}\o nderborg, 1994)}, volume~17 of {\em Perspect. Math.}, pages 99--165.
  Academic Press, San Diego, CA, 1997.

\bibitem{kob98}
T.~Kobayashi.
\newblock Deformation of compact {C}lifford-{K}lein forms of
  indefinite-{R}iemannian homogeneous manifolds.
\newblock {\em Math. Ann.}, 310(3):395--409, 1998.

\bibitem{KN}
T.~Kobayashi and S.~Nasrin.
\newblock Deformation of properly discontinuous actions of {$\Bbb Z^k$} on
  {$\Bbb R^{k+1}$}.
\newblock {\em Internat. J. Math.}, 17(10):1175--1193, 2006.

\bibitem{kobayashiono}
T.~Kobayashi and K.~Ono.
\newblock Note on {H}irzebruch's proportionality principle.
\newblock {\em J. Fac. Sci. Univ. Tokyo Sect. IA Math.}, 37(1):71--87, 1990.

\bibitem{KY}
T.~Kobayashi and T.~Yoshino.
\newblock Compact {C}lifford-{K}lein forms of symmetric spaces---revisited.
\newblock {\em Pure Appl. Math. Q.}, 1(3, Special Issue: In memory of Armand
  Borel. Part 2):591--663, 2005.

\bibitem{KY07}
T.~Kobayashi and T~Yoshino.
\newblock Uniform lattices for tangential symmetric spaces.
\newblock {\em unpublished manuscript}, 2007.

\bibitem{Kulkarni}
R.~S. Kulkarni.
\newblock Proper actions and pseudo-{R}iemannian space forms.
\newblock {\em Adv. in Math.}, 40(1):10--51, 1981.

\bibitem{Labourie}
F.~Labourie.
\newblock Quelques r\'{e}sultats r\'{e}cents sur les espaces localement
  homog\`enes compacts.
\newblock In {\em Manifolds and geometry ({P}isa, 1993)}, Sympos. Math., XXXVI,
  pages 267--283. Cambridge Univ. Press, Cambridge, 1996.

\bibitem{lipsman}
R.~L. Lipsman.
\newblock Proper actions and a compactness condition.
\newblock {\em J. Lie Theory}, 5(1):25--39, 1995.

\bibitem{nasrin}
S.~Nasrin.
\newblock Criterion of proper actions for 2-step nilpotent {L}ie groups.
\newblock {\em Tokyo J. Math.}, 24(2):535--543, 2001.

\bibitem{N54}
Katsumi Nomizu.
\newblock Invariant affine connections on homogeneous spaces.
\newblock {\em Amer. J. Math.}, 76:33--65, 1954.

\bibitem{R}
M.~S. Raghunathan.
\newblock {\em Discrete subgroups of {L}ie groups}.
\newblock Springer-Verlag, New York-Heidelberg, 1972.
\newblock Ergebnisse der Mathematik und ihrer Grenzgebiete, Band 68.

\bibitem{saito}
M.~Saito.
\newblock Sur certains groupes de {L}ie r\'esolubles. {II}.
\newblock {\em Sci. Papers Coll. Gen. Ed. Univ. Tokyo}, 7:157--168, 1957.

\bibitem{witte}
D.~Witte.
\newblock Superrigidity of lattices in solvable {L}ie groups.
\newblock {\em Invent. Math.}, 122(1):147--193, 1995.

\bibitem{3step}
T.~Yoshino.
\newblock Criterion of proper actions for 3-step nilpotent {L}ie groups.
\newblock {\em Internat. J. Math.}, 18(7):783--795, 2007.

\end{thebibliography}

\end{document}